  \newcommand{\look}[1]{}%
  \newcommand{\lookO}[1]{}%
  \newcommand{\lookS}[1]{}%
\numberwithin{equation}{section}
\newcounter{myenumi}
\newcommand{\itemref}[1]{\eqref{#1}}
\newcommand{\myfont}{\sffamily}
\newcommand{\myparagraph}[1]{\noindent\textbf{\myfont{#1}}}
\newtheoremstyle{mythmstyle}
  {\topsep}
  {\topsep}
  {\itshape}
  {}
  {\bfseries \myfont}
  {.}
  {.5em}
  {}
\newtheoremstyle{mydefstyle}
  {\topsep}
  {\topsep}
  {\normalfont}
  {}
  {\bfseries \myfont}
  {.}
  {.5em}
  {}
\theoremstyle{mythmstyle}       
\newtheorem{theorem}{Theorem}[section]
\newtheorem{proposition}[theorem]{Proposition}
\newtheorem{lemma}[theorem]{Lemma}
\newtheorem{corollary}[theorem]{Corollary}
\newcounter{intro}
\theoremstyle{mydefstyle}        
\newtheorem{definition}[theorem]{Definition}
\newtheorem{example}[theorem]{Example}
\newtheorem{examples}[theorem]{Examples}
\newtheorem{remark}[theorem]{Remark}
\newtheorem{remarks}[theorem]{Remarks}
\newtheorem*{remark*}{Remark}
\let\expandafter\oldproof\csname\string\proof\endcsname
\let\oldendproof\endproof
\renewenvironment{proof}[1][\bfseries\myfont\proofname]{%
  \oldproof[\bfseries \myfont #1]%
}{\oldendproof}
\renewcommand\section{\@startsection{section}{1}%
  \z@{.7\linespacing\@plus\linespacing}{.5\linespacing}%
  {\Large\myfont\bfseries}}
\renewcommand\subsection{\@startsection{subsection}{2}%
  \z@{-.5\linespacing\@plus-.7\linespacing}{.5\linespacing}%
  {\large\myfont\bfseries}}
\renewcommand\subsubsection{\@startsection{subsubsection}{3}%
  \z@{.5\linespacing\@plus.7\linespacing}{-.5em}%
  {\myfont\bfseries}}
\renewenvironment{abstract}{%
  \ifx\maketitle\relax
    \ClassWarning{\@classname}{Abstract should precede
      \protect\maketitle\space in AMS document classes; reported}%
  \fi
  \global\setbox\abstractbox=\vtop \bgroup
    \normalfont\Small
    \list{}{\labelwidth\z@
      \leftmargin3pc \rightmargin\leftmargin
      \listparindent\normalparindent \itemindent\z@
      \parsep\z@ \@plus\p@
      
    }%
    \item[\hskip\labelsep
      \myfont\bfseries
    \abstractname.]%
}{%
  \endlist\egroup
  \ifx\@setabstract\relax \@setabstracta \fi
}
\renewcommand\contentsnamefont{\myfont\bfseries}
\renewcommand\@starttoc[2]{\begingroup
  \setTrue{#1}%
  \par\removelastskip\vskip\z@skip
  \@startsection{}\@M\z@{\linespacing\@plus\linespacing}%
    {.5\linespacing}{
      \contentsnamefont}{#2}%
  \ifx\contentsname#2%
  \else \addcontentsline{toc}{section}{#2}\fi
  \makeatletter
  \@input{\jobname.#1}%
  \if@filesw
    \@xp\newwrite\csname tf@#1\endcsname
    \immediate\@xp\openout\csname tf@#1\endcsname \jobname.#1\relax
  \fi
  \global\@nobreakfalse \endgroup
  \addvspace{32\p@\@plus14\p@}%
  \let\tableofcontents\relax
}
\renewcommand\@settitle{\begin{center}%
  \baselineskip14\p@\relax
    \LARGE
    \bfseries
    \myfont
  \@title
  \end{center}%
}
\renewcommand\@setauthors{%
  \begingroup
  \def\thanks{\protect\thanks@warning}%
  \trivlist
  \centering\footnotesize \@topsep30\p@\relax
  \advance\@topsep by -\baselineskip
  \item\relax
  \author@andify\authors
  \def\\{\protect\linebreak}%
  \large
  \myfont\bfseries\authors
  \ifx\@empty\contribs
  \else
    ,\penalty-3 \space \@setcontribs
    \@closetoccontribs
  \fi
  \endtrivlist
  \normalfont\myfont\@setaddresses
  \endgroup
}
\renewcommand\@setaddresses{\par
  \nobreak \begingroup
\footnotesize
  \def\author##1{\nobreak\addvspace\bigskipamount}%
  \def\\{\unskip, \ignorespaces}%
  \interlinepenalty\@M
  \def\address##1##2{\begingroup
    \par\addvspace\bigskipamount\indent
    \@ifnotempty{##1}{(\ignorespaces##1\unskip) }%
    {
      \ignorespaces##2}\par\endgroup}%
  \def\curraddr##1##2{\begingroup
    \@ifnotempty{##2}{\nobreak\indent\curraddrname
      \@ifnotempty{##1}{, \ignorespaces##1\unskip}\/:\space
      ##2\par}\endgroup}%
  \def\email##1##2{\begingroup
    \@ifnotempty{##2}{\nobreak\indent\emailaddrname
      \@ifnotempty{##1}{, \ignorespaces##1\unskip}\/:\space
      \ttfamily##2\par}\endgroup}%
  \def\urladdr##1##2{\begingroup
    \def~{\char`\~}%
    \@ifnotempty{##2}{\nobreak\indent\urladdrname
      \@ifnotempty{##1}{, \ignorespaces##1\unskip}\/:\space
      \ttfamily##2\par}\endgroup}%
  \addresses
  \endgroup
}
\renewcommand\enddoc@text{\ifx\@empty\@translators \else\@settranslators\fi
}
\renewcommand\@secnumfont{\myfont\bfseries} 
\renewcommand\maketitle{\par
  \@topnum\z@ 
  \@setcopyright
  \thispagestyle{firstpage}
  \ifx\@empty\shortauthors \let\shortauthors\shorttitle
  \else \andify\shortauthors
  \fi
  \@maketitle@hook
  \begingroup
  \@maketitle
  \toks@\@xp{\shortauthors}\@temptokena\@xp{\shorttitle}%
  \toks4{\def\\{ \ignorespaces}}
  \edef\@tempa{%
    \@nx\markboth{\the\toks4
      \@nx\MakeUppercase{\the\toks@}}{\the\@temptokena}}%
  \@tempa
  \endgroup
  \c@footnote\z@
  \@cleartopmattertags
}
\newcommand{\Sec}[1]{Section~\ref{sec:#1}}
\newcommand{\Subsec}[1]{Subsection~\ref{ssec:#1}}
\newcommand{\Subsecs}[2]{Subsections~\ref{ssec:#1} and~\ref{ssec:#2}}
\newcommand{\SubsecS}[2]{Subsections~\ref{ssec:#1}--\ref{ssec:#2}}
\newcommand{\Fig}[1]{Figure~\ref{fig:#1}}
\newcommand{\Thm}[1]{Theorem~\ref{thm:#1}}
\newcommand{\Thms}[2]{Theorems~\ref{thm:#1} and~\ref{thm:#2}}
\newcommand{\Thmenum}[2]{Theorem~\ref{thm:#1}~(\ref{#2})}
\newcommand{\Ex}[1]{Example~\ref{ex:#1}}
\newcommand{\Exs}[2]{Examples~\ref{ex:#1} and~\ref{ex:#2}}
\newcommand{\Lem}[1]{Lemma~\ref{lem:#1}}
\newcommand{\Lemenum}[2]{Lemma~\ref{lem:#1}~(\ref{#2})}
\newcommand{\Cor}[1]{Corollary~\ref{cor:#1}}
\newcommand{\Prp}[1]{Proposition~\ref{prp:#1}}
\newcommand{\Rem}[1]{Remark~\ref{rem:#1}}
\newcommand{\Rems}[2]{Remarks~\ref{rem:#1} and~\ref{rem:#2}}
\newcommand{\Remenum}[2]{Remark~\ref{rem:#1}~(\ref{#2})}
\newcommand{\RemenumS}[3]{Remark~\ref{rem:#1}~(\ref{#2})--(\ref{#3})}
\newcommand{\Def}[1]{Definition~\ref{def:#1}}
\newcommand{\Defs}[2]{Definitions~\ref{def:#1} and~\ref{def:#2}}
\newcommand{\Defenum}[2]{Definition~\ref{def:#1}~(\ref{#2})}
\newcommand{\abs}[2][{}]{\lvert{#2}\rvert_{{#1}}}    
\newcommand{\abssqr}[2][{}]{\lvert{#2}\rvert^2_{#1}} 
\newcommand{\bigabssqr}[2][{}]{\bigl\lvert{#2}\bigr\rvert^2_{#1}}
\newcommand{\Bigabssqr}[2][{}]{\Bigl\lvert{#2}\Bigr\rvert^2_{#1}}
\newcommand{\normsymb}{\|}
\newcommand{\bignormsymb}[1]{#1\|}
\newcommand{\norm}[2][{}]{\normsymb{#2}\normsymb_{{#1}}}    
\newcommand{\normsqr}[2][{}]{\normsymb{#2}\normsymb^2_{#1}} 
\newcommand{\bignorm}[2][{}]{\bignormsymb{\bigl}{#2}\bignormsymb{\bigr}_{#1}}
\newcommand{\bignormsqr}[2][{}]{\bignormsymb{\bigl}{#2}%
                                \bignormsymb{\bigr}^2_{#1}}
\newcommand{\Bignorm}[2][{}]{\bignormsymb{\Bigl}{#2}\bignormsymb{\Bigr}_{#1}}
\newcommand{\iprod}[3][{}]{\langle{#2},{#3}\rangle_{#1}}  
\newcommand{\bigiprod}[3][{}]{\bigl\langle{#2},{#3}\bigr\rangle_{#1}}
\newcommand{\set}[2]{\{ \, #1 \, | \, #2 \, \} }      
\newcommand{\bigset}[2]{\bigl\{ \, #1 \, \bigl|\bigr. \, #2 \, \bigr\} }
\newcommand{\Bigset}[2]{\Bigl\{ \, #1 \, \Bigl|\Bigr. \, #2 \, \Bigr\} }
\newcommand{\map}[3]{ #1 \colon #2 \longrightarrow #3}    
\newcommand{\bd}  {\partial}          
\newcommand{\clo}[2][]{\overline{{#2}}^{#1}} 
\newcommand{\restr}[1]{{\restriction}_{#1}} 
\def\XXint#1#2#3{{\setbox0=\hbox{$#1{#2#3}{\int}$}
     \vcenter{\hbox{$#2#3$}}\kern-.5\wd0}}
\def\XXsum#1#2#3{{\setbox0=\hbox{$#1{#2#3}{\int}$}
     \vcenter{\hbox{$#2#3$}}\kern-.60\wd0}}
\newcommand{\card}[1]{\lvert#1\rvert}   
\newcommand{\dd}    {\, \mathrm d}    
\DeclareMathOperator{\dom}    {dom}
\DeclareMathOperator{\ran}    {ran}
\DeclareMathOperator{\id}     {id}   
\newcommand{\specsymb} {\sigma} 
\newcommand{\spec}[2][{}]   {\specsymb_{\mathrm{#1}}(#2)}
\renewcommand{\phi}{\varphi}   
\renewcommand{\rho}{\varrho}   
\newcommand{\conj}[1]{\overline {#1}}
\newcommand{\R}{\mathbb{R}} 
\newcommand{\C}{\mathbb{C}} 
\newcommand{\N}{\mathbb{N}} 
\newcommand{\1}{\mathbbm 1}                    
\newcommand{\im}{\mathrm i} 
\newcommand{\wt}{\widetilde}           
\newcommand {\qf}[1]{\mathfrak{#1}}    
\newcommand{\HS}{\mathscr H}           
\newcommand{\Sobsymb} {\mathsf H} 
\newcommand{\Sobnsymb} {\ring{\mathsf H}} 
\newcommand{\Lsymb}    {\mathsf L}     
\newcommand{\lsymb}    {\ell}          
\newcommand{\Sobspace}[1][1]{\Sobsymb^{#1}}
\newcommand{\Sobnspace}[1][1]{\Sobnsymb^{#1}}
\newcommand{\Lpspace}[1][p]    {\Lsymb_{#1}}     
\newcommand{\lpspace}[1][p]    {\lsymb_{#1}}     
\newcommand{\Lsqrspace}    {\Lpspace[2]}     
\newcommand{\lsqrspace}    {\lpspace[2]}          
\newcommand{\Lsqr}[2][{}]{\Lsqrspace^{#1}({#2})} 
\newcommand{\lsqr}[2][{}]{\lsqrspace^{#1}({#2})}   
\newcommand{\Linfty}[2][{}]{\Lpspace [\infty] ^{#1}({#2})} 
\newcommand{\Sob}[2][1]{\Sobspace [#1]({#2})}         
\newcommand{\Sobn}[2][1]{\Sobnspace [#1]({#2})}  
\newcommand{\Err}{\mathrm O}
\newcommand{\quadtext}[1]{\quad\text{#1}\quad}
\newcommand{\qquadtext}[1]{\qquad\text{#1}\qquad}
\newcommand{\Lin}[1]{\mathfrak{L}(#1)}
\providecommand{\myfont}{}
\providecommand{\look}[1]{}
\providecommand{\lookO}[1]{}
\providecommand{\lookS}[1]{}
\renewcommand{\itemref}[1]{\noindent \eqref{#1}}
\renewcommand{\implies}{\Rightarrow}
\renewcommand{\iff}{\Leftrightarrow}
\newcommand{\quadiff}{\;\iff\;}
\newcommand{\HSmin}{\HS_{\min}} %
\newcommand{\D}{A}        
\newcommand{\HSgen}{\HS}  
\newcommand{\Nbar}{\overline \N} 
\newcommand{\gnrc}{\overset{\mathrm{gnrc}}%
    {\underset{\mathrm{QUE}}{\longrightarrow}}}
\newcommand{\gnrcW}{\overset{\mathrm{gnrc}}%
  {\underset{\mathrm{Weid}}{\longrightarrow}}}
\newcommand{\strongto}{\overset{\mathrm{s}}{\longrightarrow}}
\DeclareMathOperator{\Specsymb}{spec}
\renewcommand{\specsymb}{\Specsymb}
\DeclareMathOperator{\clolin}{\overline{lin}}
\begin{document}

\title[] {Generalised norm resolvent convergence: comparison of
  different concepts}

\author{Olaf Post}
\address{Fachbereich 4 -- Mathematik,
  Universit\"at Trier,
  54286 Trier, Germany}
\email{olaf.post@uni-trier.de}

\author{Sebastian Zimmer}%
\address{Fachbereich 4 -- Mathematik,
  Universit\"at Trier,
  54286 Trier, Germany}
\email{zimmerse@uni-trier.de}

\ifthenelse{\isundefined \finalVersion} %
{\date{\today, \thistime,  \emph{File:} \texttt{\jobname.tex}}}
{\date{\today}}  

\begin{abstract}
  In this paper, we show that the two concepts of generalised norm
  resolvent convergence introduced by Weidmann and the first author of
  this paper are equivalent.  We also focus on the convergence speed
  and provide conditions under which the convergence speed is the same
  for both concepts.  We illustrate the abstract results by a large
  number of examples.
\end{abstract}

\subjclass[2020]{Primary 47A55; Secondary 47A58, 47A10, 47B02}

\maketitle




%
\section{Main results}
\label{sec:intro}
%

\subsection{Introduction}

Convergence of operators is an important topic in many areas of
mathematics and applications.  For unbounded operators such as
Laplacians in Hilbert spaces, one usually studies convergence of their
\emph{resolvents}.  Often, not only the operators vary with the
sequence parameter, but also the spaces in which they are defined,
e.g., when one considers Laplace operators on varying domains.  We
refer to this fact as \emph{generalised} convergence (in contrast to
Kato~\cite[Sec.IV.2]{kato:67} where \emph{generalised} convergence
just means convergence of the resolvents).

Different types of convergence of operators are of interest:
convergence in \emph{operator norm} (uniform convergence),
\emph{strong} (i.e., pointwise) convergence, and weaker versions.  We
focus here on operator \emph{norm} convergence of resolvents of
(possibly) unbounded self-adjoint operators acting in Hilbert spaces;
for results on strong convergence we refer to
\Subsec{ex.lit}. Generalisations to non-self-adjoint operators or
operators acting in Banach spaces are possible, see \Subsec{outlook}
below.

Probably the first abstract approach of convergence of operators
acting on varying Banach spaces --- a priori not embedded in a common
space --- is given in~\cite{stummel:71} (see also the references
therein and~\cite{stummel:76} for a summary).  Stummel defines what he
calls ``discrete convergence of operators'' which is a generalisation
of \emph{strong} (pointwise) convergence of the resolvents to the case
where the underlying spaces vary.  Stummel's setting includes not only
discretisations of partial differential operators (see
e.g.~\cite{vainikko:77}, also for some older abstract results
before~\cite{stummel:71}), but also the case when the domains of the
partial differential operators vary, see \Subsec{ex.lit} for a more
detailed discussion.  For an overview on results on domain
perturbation up to 2008 we refer to the nice survey~\cite{daners:08}
and the references therein; for more recent results
see~\cite{arrieta-lamberti:17,anne-post:21} and references therein.

Weidmann defines \emph{generalised} norm/strong resolvent convergence
implicitly in~\cite{weidmann:84}, and explicitly
in~\cite[Sec.~9.3]{weidmann:00}, having domain perturbations of
Laplacians in mind.  Weidmann basically embeds all spaces into a large
one (called \emph{parent space} here) and considers convergence of the
``lifted'' resolvents in the parent space.  The lifted resolvents are
\emph{pseudo-resolvents}, cf.\ \Remenum{weidmann}{weidmann.b} below.
We introduce Weidmann's concept in \Subsec{gnrc.weid}.  Independently,
the first author of this paper defined generalised norm resolvent
convergence in~\cite{post:12}, based on the concept of
\emph{quasi-unitary equivalence} (\emph{QUE} for short), a
quantitative generalisation of unitary equivalence, first introduced
in~\cite{post:06}.  We present the latter \emph{generalised norm
  resolvent convergence} briefly in \Subsec{gnrc.que}.

The main result of this paper (\Thm{main}) is the equivalence of
generalised norm resolvent convergence as introduced by Weidmann and
the first author.  Let us mention that norm resolvent convergence of
operators $A_n$ towards $A_\infty$ as well as both generalisations
discussed in this paper imply the convergence of bounded operators of
the original operators such as spectral projections and the heat
operator in operator norm (in a suitable sense),
cf.~\cite[Satz~9.28]{weidmann:00} and \cite[Sec.~4.2]{post:12},
\cite[Sec.~1.3]{post-simmer:19} for precise statements.

An important feature of \emph{norm} resolvent convergence for
self-adjoint operators is the \emph{convergence of spectra} in the
sense that
\begin{equation}
  \label{eq:conv.spec}
  \lambda_\infty \in \spec{A_\infty}
  \qquadtext{if and only if}
  \forall n \in \N \; \exists \lambda_n \in \spec{A_n} \colon
  \lambda_n \to \lambda_\infty
\end{equation}
(cf.~\cite[Satz~9.24~(a)]{weidmann:00}).  For the weaker notion of
\emph{strong} (pointwise) resolvent convergence, this is no longer
automatically true, i.e., it may happen that there is an infinite
subset $I \subset \N$ such that for each $n \in I$ there exists an
element $\lambda_n \in \spec {A_n}$ with
$\lambda_n \to \lambda_\infty$ as $n \in I$ and $n \to \infty$, but
$\lambda_\infty \notin \spec {A_\infty}$.  Such points are called
\emph{spectral pollution}.

When considering non-self-adjoint operators, the spectral convergence
may fail (cf.~e.g.~\cite[Ex.~IV.3.8]{kato:67}) even for \emph{norm}
convergence.  The behaviour of the spectrum for general closed (and in
particular \emph{non}-self-adjoint) operators under generalised norm
and strong resolvent convergence in the sense of Weid\-mann is
discussed in~\cite{boegli:17,boegli:18} and references therein.  For
example, no spectral pollution occurs under generalised norm resolvent
convergence (\cite[Thm.~2.4~(i)]{boegli:17}).

We will not treat generalised \emph{strong} convergence here, as it is
a priori not obvious how to implement it in the setting of
quasi-unitary equivalence.  Nevertheless, Weidmann's concept allows a
straightforward definition, and the equivalence of both concepts in
the norm convergence case gives the chance to define generalised
strong resolvent convergence also in the QUE-setting.  We will treat
this in a forthcoming publication.

Many of the above-mentioned concepts are formulated in a variational
way, using sesquilinear forms instead of operators.  To simplify the
presentation, we only use operators here.  Note that the QUE-setting
was originally formulated for sesquilinear forms in~\cite{post:06}.

\subsection{Generalised norm resolvent convergence by Weidmann}
\label{ssec:gnrc.weid}
Weidmann's idea is to compare the operators acting in different
Hilbert spaces in a common so-called \emph{parent Hilbert space} where
the individual Hilbert spaces are subspaces of.  We use the following
generalisation here (cf. also
\RemenumS{weidmann}{weidmann.c}{weidmann.d}):
\begin{definition}[Weidmann's convergence]
  \label{def:gnrc-weid}
  Let $\D_n$ be a self-adjoint bounded or unbounded operator in a
  Hilbert space $\HS_n$ for
  $n \in \Nbar:=\N \cup \{\infty\}=\{1,2,3,\dots,\infty\}$.  We say
  that the sequence $(\D_n)_{n \in \N}$ \emph{converges to}
  $\D_\infty$ \emph{in generalised norm resolvent sense of Weidmann}
  (or shortly \emph{Weidmann-converges}), if the following conditions
  are true:
  \begin{enumerate}
  \item There exist a Hilbert space $\HSgen$, called \emph{parent
      (Hilbert) space} and for each $n \in \Nbar$ an isometry
    $\map {\iota_n}{\HS_n}{\HSgen}$.
  \item We have $\delta_n := \norm[\Lin{\HSgen}]{D_n} \to 0$ as
    $n \to \infty$, where
    \begin{equation}
      \label{eq:weidmanns_diff}
      D_n := \iota_n R_n \iota_n^* - \iota_\infty R_\infty \iota_\infty^*
    \end{equation}
    and where
    \begin{equation}
      \label{eq:def.res}
      R_n := (\D_n - z_0)^{-1}
    \end{equation}
    for $n \in \Nbar$ is the resolvent of $\D_n$ at some common
    resolvent element
    $z_0 \in \Gamma:=\bigcap_{n \in \Nbar} \rho(\D_n)$ (we will not
    stress the dependency of $R_n$ on $z_0$ in the notation).
  \end{enumerate}
  For short, we write $ \D_n \gnrcW \D_\infty $ and call
  $(\delta_n)_n$ the \emph{(convergence) speed}.
\end{definition}

\begin{example}[A motivating example]
  \label{ex:motivating.example1}
  We treat here a situation appearing often in applications and which
  is also the basis of Weidmann's consideration: domain perturbations.
  Assume that $X$ is a measure space with measure $\mu$, and that
  $\HS=\Lsqr X$.  We assume that $X_n \subset X$ are measurable
  subsets of $X$ for $n \in \Nbar$, and that the measure on $X_n$ is
  the restriction of the measure $\mu$ to subsets of $X_n$.  To avoid
  exceptional cases, we assume that $X_n \cap X_\infty$ has positive
  measure for all $n \in \N$.  We set $\HS_n = \Lsqr{X_n}$.  Denote
  the restriction of an equivalence class $f$ of functions from
  $\Lsqr X$ to $X_n$ by $f \restr {X_n}$.  Its adjoint is the
  embedding $\map {\iota_n} {\HS_n}\HS$ given by the extension of
  $f_n \in \HS_n$ by $0$, denoted as $f_n \oplus 0_{X \setminus X_n}$.
  We specify the operators $A_n$ acting on (a dense subspace of)
  $\HS_n$ in a moment, but assume here for simplicity that $A_n \ge 0$
  so we can choose $z_0=-1$ as common resolvent point and set
  $R_n := (A_n+1)^{-1}$.

  A natural candidate for a parent space and isometries are
  $\HS = \Lsqr X$ and
  \begin{equation*}
    \map{\iota_n}{\HS_n=\Lsqr{X_n}}{\HS=\Lsqr X},
    \qquad
    f_n \mapsto f_n \oplus 0_{X \setminus X_n}
  \end{equation*}
  for $n \in \Nbar$.  The operator norm estimate in Weidmann's
  convergence now is equivalent with
  \begin{align}
    \nonumber
    \normsqr[\HS]{(\iota_n R_n \iota_n^*
    -\iota_\infty R_\infty \iota_\infty^*)f}
    &= \int_X
      \bigabssqr{R_n (f \restr {X_n}) \oplus 0_{X \setminus X_n}
      - R_\infty (f \restr {X_\infty}) \oplus 0_{X \setminus X_\infty}}
      \dd \mu\\
    \nonumber
    &= \int_{X_n \cap X_\infty}
      \bigabssqr{R_n (f \restr {X_n})
      - R_\infty (f \restr {X_\infty})} \dd \mu\\
    \nonumber
    & \hspace*{0.1\textwidth}
      + \int_{X_n \setminus X_\infty}
      \bigabssqr{R_n (f \restr {X_n})}\dd \mu
      + \int_{X_\infty \setminus X_n}
      \bigabssqr{R_\infty (f \restr {X_\infty})}\dd \mu\\
    \label{eq:norm.weid.ex}
    &\le \delta_n^2 \normsqr[\Lsqr X] f
  \end{align}
  for all $f \in \HS=\Lsqr X$.  If $A_n$ is the operator
  multiplicating with the function $\map {a_n}{X_n}{[0,\infty)}$, then
  $R_n(f \restr {X_n})= (a_n+1)^{-1} f \restr {X_n}$.

  \myparagraph{Concrete example A: monotonely decreasing sequence.} %
  Let $X=[0,\infty)$ with Lebesgue measure,
  $X_n=[0,1] \cup [2^n,\infty)$ and $a_n(x)=x$ for $n \in \N$ and
  $X_\infty=[0,1]$, $a_\infty(x)=x$.  Here, the action of $A_n$ is the
  same for all $n \in \Nbar$, and $X_\infty \subset X_n$, hence the
  first and third integral in~\eqref{eq:norm.weid.ex} equal $0$ and
  for the second integral we have
  \begin{equation*}
    \int_{X_n \setminus X_\infty} \bigabssqr{R_n (f \restr {X_n})}\dd \mu
    = \int_{(2^n,\infty)} \Bigabssqr{\frac1{x+1} f(x)} \dd x
    \le \frac 1{(2^n)^2} \normsqr[\Lsqr X] f.
  \end{equation*}
  In particular, $A_n \gnrcW A_\infty$ with convergence speed
  $\delta_n=2^{-n}$.

  \myparagraph{Concrete example B: monotonely increasing sequence.} %
  Let $X=(0,1]$ with Lebesgue measure,
  $X_n=[2^{-n},1]$ and $a_n(x)=1/x$ for $n \in \N$ and
  $X_\infty=(0,1]$, $a_\infty(x)=1/x$.  Again, the action of $A_n$ is the
  same for all $n \in \Nbar$, and $X_n \subset X_\infty$, hence the
  first and second integral in~\eqref{eq:norm.weid.ex} equal $0$ and
  for the third integral we have
  \begin{equation*}
    \int_{X_\infty \setminus X_n}
    \bigabssqr{R_\infty (f \restr {X_\infty})}\dd \mu
    = \int_{(0,2^{-n})} \Bigabssqr{\frac1{1/x+1} f(x)} \dd x
    \le \frac 1{(2^n)^2} \normsqr[\Lsqr X] f.
  \end{equation*}
  In particular, $A_n \gnrcW A_\infty$ with convergence speed
  $\delta_n=2^{-n}$.
\end{example}

\begin{remark}[Weidmann's convergence: uniqueness, pseudo-resolvents,
  isometries versus subspaces]
  \label{rem:weidmann}
  \indent
  \begin{enumerate}
  \item The parent space in (our notion of) Weidmann's convergence is
    of course not unique.

  \item
    \label{weidmann.b}
    The lifted resolvents are also called \emph{pseudo-resolvents}
    (see e.g.~\cite[Sec.~VIII.4]{yosida:80}): A family
    $(R(z))_{z \in \Gamma}$ of bounded operators
    $\map{R(z)}\HSgen\HSgen$ with $z \in \Gamma \subset \C$ is called
    a family of \emph{pseudo-resolvents} if the (first) resolvent
    equation
    \begin{equation}
      \label{eq:pseudo.res}
      R(z)-R(w)=(z-w)R(z)R(w)
    \end{equation}
    holds for all $z,w \in \Gamma$.  One can see e.g.\ that
    $\ker R(z)$ is independent of $z \in \Gamma$.  Moreover,
    $R(z)=(A-z)^{-1}$ for some closed operator $A$ if and only if
    $\ker R(z)=\{0\}$. (cf.~\cite[Thm~VIII.4.1]{yosida:80}).  In
    particular, Weidmann's generalised resolvent convergence is a
    rather natural generalisation of the usual resolvent convergence.
  \item
    \label{weidmann.c}
    At this point it should be noted that the situation in the book of
    Weidmann~\cite[Sec.~9.3]{weidmann:00} (see also
    \cite{boegli:17,boegli:18}) is slightly different.  Weidmann
    assumes that $\HS_n$ and $\HS_\infty$ are \emph{subspaces} of the
    common Hilbert space $\HS$.  Moreover, he uses the notation $P_n$
    both for the orthogonal projection onto $\HS_n$ as map
    $\HS \to \HS$ as well as for the co-isometry (the adjoint of an
    isometry, denoted in this article by $\iota_n^*$) as map
    $\HSgen \to \HS_n$.  Moreover, the inclusion
    $\HS_n \subseteq \HSgen$ (here denoted by
    $\map{\iota_n}{\HS_n}{\HSgen}$) is not given a proper name
    in~\cite{weidmann:00,boegli:17}.

  \item
    \label{weidmann.d}
    Our interpretation of Weidmann's generalised norm resolvent
    convergence starts with \emph{isometries}
    $\map{\iota_n}{\HS_n}{\HSgen}$ instead of \emph{subspaces}
    $\HS_n \subset \HS$.  This generalisation is necessary in order to
    compare the two concepts especially in cases when there is no
    natural common parent space (see e.g.\
    \SubsecS{met.graphs}{disc.graphs}).  But this generalisation allows
    certain unwanted cases of ``convergence'': if e.g.\ $\D$ is a
    self-adjoint operator in $\HS$ and if $\map {U_n}{\HS_n}\HS$ is
    unitary for each $n \in \Nbar$, then $(\D_n)_n$ with
    $\D_n := U_n^* \D U_n$ \emph{always} converges to $\D$: choose
    $\iota_n:=U_n$ ($n \in \N$) and $\iota_\infty=\id_\HSgen$ then
    \begin{equation*}
      \iota_n R_n \iota_n^*
      =(\D-z_0)^{-1}
      =\iota_\infty R_\infty \iota_\infty^*.
    \end{equation*}
    One way of avoiding the above-mentioned ``unitary mixing'' is to
    use an additional lattice structure on the Hilbert spaces: Assume
    that the spaces $\HS_n$ ($n \in \Nbar$) and $\HS$ are
    $\Lsqrspace$-spaces.  It is then natural to assume that an
    isometry $\map {\iota_n}{\HS_n}\HS$ is
    \emph{positivity-preserving}, i.e., $f_n \ge 0$ implies
    $\iota_n f_n \ge 0$ pointwise almost everywhere.  This is the case
    in our motivating example \Ex{motivating.example1} and in some of
    our examples in \Sec{examples}.  A weaker condition is that the
    corresponding identification operators
    $J_n=\iota_\infty^* \iota_n$ (see below) are
    positivity-preserving; we see in \Sec{examples} that these
    identification operators are \emph{all} positivity-preserving in
    our examples.
  \end{enumerate}
\end{remark}

\subsection{Generalised norm resolvent convergence based on
  quasi-unitary equivalence}
\label{ssec:gnrc.que}
Independently of Weidmann's concept, the first author of the present
paper developed the notion \emph{quasi-unitary equivalence} of two
self-adjoint, unbounded and non-negative operators $\D_1$ and $\D_2$
acting in $\HS_1$ and $\HS_2$, respectively measuring a sort of
``distance''.  The setting incorporates an identification operator
$\map J {\HS_1}{\HS_2}$ in the norm difference of the resolvents
$R_1=(\D_1+1)^{-1}$ and $R_2=(\D_2+1)^{-1}$, namely
$\norm[\Lin{\HS_1,\HS_2}]{JR_1-R_2J}$.  In order to exclude trivial
cases such as $J=0$ we want that $J$ is ``close'' to a unitary
operator, measured again by a norm estimate:
\begin{definition}[QUE: quasi-unitary equivalence]
  \label{def:que}
  Let $\D_1$ and $\D_2$ be two self-adjoint operators acting in $\HS_1$
  and $\HS_2$, respectively.  For $\delta \geq 0$ we say that $\D_1$
  and $\D_2$ are \emph{$\delta$-quasi-unitary equivalent} if there
  exist a common resolvent element
  $z_0 \in \rho(\D_1) \cap \rho(\D_2)$ and a bounded operator
  $\map J {\HS_1} {\HS_2}$ such that the norm inequalities
  \begin{subequations}
      \label{eq:que}
      \begin{align}
        \label{eq:que.a}
      \norm[\Lin{\HS_1, \HS_2}] J &\leq 1+ \delta,\\
        \label{eq:que.b}
      \bignorm[\Lin {\HS_1}]{(\id_{\HS_1}-J^*J)R_1} &\leq \delta,\qquad
      \bignorm[\Lin {\HS_2}]{(\id_{\HS_2}-JJ^*)R_2} \leq \delta\\
        \label{eq:que.c}
      \norm[\Lin{\HS_1, \HS_2}]{R_2J-JR_1} &\leq \delta
    \end{align}
  \end{subequations}
  hold.  If $z_0 \in \C \setminus \R$, we require that the two
  estimates in~\eqref{eq:que.b} and that~\eqref{eq:que.c} also hold
  with the resolvent $R_j=(\D_j-z_0)^{-1}$ replaced by
  $R_j^*=(\D_j-\conj z_0)^{-1}$.  The operators $J$ and $J^*$ are
  called \emph{identification operators}, and $\delta$ is called
  \emph{distance bound} or \emph{error}.
\end{definition}
In~\cite{post:12} only non-negative operators are considered, hence
one can choose $z_0=-1$.  An extension to non-self-adjoint operators
is possible, see \Rem{non-sa}.

Next, we want to transfer this concept onto a family of self-adjoint
operators to define a convergence.  The idea is to check whether every
member of the family is quasi-unitary equivalent with the limit
operator, and that the sequence of their distance bounds $\delta_n$
converges to $0$:
\begin{definition}[QUE-convergence]
  \label{def:gnrc-que}
  Let $\D_n$ be a self-adjoint bounded or unbounded operator in a
  Hilbert space $\HS_n$ for $n \in \Nbar$.  We say that the sequence
  $(\D_n)_{n \in \N}$ \emph{converges to} $\D_\infty$ \emph{in
    generalised norm resolvent sense} (or shortly
  \emph{QUE-converges}), if there exist
  $z_0 \in \Gamma:=\bigcap_{n \in \Nbar} \rho(\D_n)$ and a sequence
  $(\delta_n)_{n \in \N}$ with $\delta_n \to 0$ as $n \to \infty$ such
  that $\D_n$ and $\D_\infty$ are $\delta_n$-quasi-unitarily
  equivalent with common resolvent element $z_0$.  We write for short
  $\D_n \gnrc \D_\infty$).  We call $(\delta_n)_n$ the
  \emph{(convergence) speed}.
\end{definition}
In particular, $\D_n$ converges to $\D_\infty$ in generalised norm
resolvent sense if
\begin{subequations}
  \label{eq:gnrc-que}
  \begin{align}
    \label{eq:que1}
    \norm[\Lin{\HS_n, \HS_\infty}] {J_n} \le 1+\delta_n &\to 1,\\
    \label{eq:que2}
    \bignorm[\Lin {\HS_n}]{(\id_{\HS_n}-J_n^*J_n)R_n}  \le \delta_n & \to 0,
    \qquad
    \bignorm[\Lin {\HS_\infty}]{(\id_{\HS_\infty}-J_nJ_n^*)R_\infty}
    \le \delta_n \to 0,\\
    \label{eq:que3}
    \norm[\Lin{\HS_n, \HS_\infty}]{R_\infty J_n-J_nR_n} \le \delta_n &\to 0
  \end{align}
\end{subequations}
as $n \to \infty$ for a family of identification operators
$(J_n)_{n \in \N}$ with $\map {J_n}{\HS_n}{\HS_\infty}$, where
$R_n=(\D_n-z_0)^{-1}$ is the resolvent in some common resolvent element
$z_0 \in \Gamma$.  If $z_0 \in \C \setminus \R$, we require
that~\eqref{eq:que1}--\eqref{eq:que3} also hold for $z_0^*$, i.e., for
$R_n$ replaced by $R_n^*$.  Then it is possible to swap the order of
the operators by taking adjoints such as
\begin{equation}
  \tag{\ref{eq:que3}'}
  \label{eq:que32}
  \norm[\Lin{\HS_\infty, \HS_n}]{J_n^*R_\infty -R_nJ_n^*}
  = \norm[\Lin{\HS_n, \HS_\infty}]{R_\infty^* J_n - J_n R_n^*}
  \le \delta_n \to 0
\end{equation}
(see also \Rem{non-sa} for non-self-adjoint operators).

\begin{example}[the motivating example continued]
  \label{ex:motivating.example2}
  We come back to \Ex{motivating.example1}.  A natural candidate for
  the identification operator is
  \begin{equation*}
    \map{J_n}{\HS_n=\Lsqr{X_n}}
    {\HS_\infty=\Lsqr {X_\infty}}, \qquad
    f_n \mapsto (f_n \restr {X_n \cap X_\infty})
    \oplus 0_{X_\infty \setminus X_n}.
  \end{equation*}
  As $J_n$ is a non-trivial partial isometry (cf.\
  \Subsec{from.gen.remarks}), we have $\norm{J_n}=1$,
  hence~\eqref{eq:que1} is trivially fulfilled with $\delta_n=0$.
  Moreover, the two estimates in~\eqref{eq:que2} are equivalent with
  the two estimates
  \begin{subequations}
    \begin{align}
      \label{eq:que.norm1}
      \bignormsqr[\Lsqr{X_n}]{(\id_{\HS_n}-J_n^*J_n)R_nf_n}
      &= \int_{X_n \setminus X_\infty} \abssqr {R_n f_n} \dd \mu
        \le \delta_n^2 \normsqr[\Lsqr{X_n}] {f_n},\\
      \label{eq:que.norm2}
      \bignormsqr[\Lsqr{X_\infty}]%
      {(\id_{\HS_\infty}-J_n J_n^*)R_\infty f_\infty}
      &= \int_{X_\infty \setminus X_n} \abssqr {R_\infty f_\infty} \dd \mu
        \le \delta_n^2 \normsqr[\Lsqr{X_\infty}] {f_\infty}
    \end{align}
    for all $f_n \in \HS_n$ resp.\ $f_\infty \in \HS_\infty$
    Finally,~\eqref{eq:que3} is equivalent here with
    \begin{equation}
      \label{eq:que.norm3}
      \normsqr[\Lsqr{X_\infty}]{(R_\infty J_n - J_n R_n)f_n}
      = \int_{X_n \cap X_\infty}
      \bigabssqr{R_\infty (f_n \restr {X_n \cap X_\infty}) - R_n f_n} \dd \mu
      \le \delta_n^2 \normsqr[\Lsqr {X_n}] {f_n}
    \end{equation}
  \end{subequations}
  for all $f \in \HS_\infty=\Lsqr X$.  We can observe already here
  that the three terms in~\eqref{eq:que.norm1}--\eqref{eq:que.norm3}
  look very similar with the three integrals in Weidmann's norm
  estimate in~\eqref{eq:norm.weid.ex}

  \myparagraph{Concrete example A: monotonely decreasing sequence.} %
  Let $X=[0,\infty)$ with Lebesgue measure,
  $X_n=[0,1] \cup [2^n,\infty)$ and $a_n(x)=x$ for $n \in \N$ and
  $X_\infty=[0,1]$, $a_\infty(x)=x$.  Here, \eqref{eq:que.norm2}
  and~\eqref{eq:que.norm3} are trivially valid with $\delta_n=0$,
  only~\eqref{eq:que.norm1} is non-trivial, and as in Weidmann's
  generalised convergence, we can choose $\delta_n=2^{-n}$.  In
  particular, we have $A_n \gnrc A_\infty$ with convergence speed
  $\delta_n=2^{-n}$.

  \myparagraph{Concrete example B: monotonely increasing sequence.} %
  Let $X=(0,1]$ with Lebesgue measure, $X_n=[2^{-n},1]$ and
  $a_n(x)=1/x$ for $n \in \N$ and $X_\infty=(0,1]$, $a_\infty(x)=1/x$.
  Here, only~\eqref{eq:que.norm2} is non-trivial and we have (as in
  Weidmann's case) $A_n \gnrc A_\infty$ with convergence speed
  $\delta_n=2^{-n}$.
\end{example}

\subsection{Main result and structure of the paper}

The goal of this paper is to relate the two notions of convergence
given in \Defs{gnrc-weid}{gnrc-que}.  As we have seen already in our
motivating example, the two notions are indeed equivalent.  Here is
the main result of our paper:
\begin{theorem}[main theorem]
  \label{thm:main}
  Let $\D_n$ be a self-adjoint operator in a Hilbert space $\HS_n$ for
  each $n \in \Nbar$.  Then $(\D_n)_n$ converges to $\D_\infty$ in
  generalised norm resolvent sense of Weidmann (cf.\ \Def{gnrc-weid})
  if and only if $(\D_n)_n$ converges to $\D_\infty$ in generalised
  norm resolvent (in the QUE-sense of \Def{gnrc-que}).
\end{theorem}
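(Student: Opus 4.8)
I would prove the two implications separately. For the implication ``Weidmann $\Rightarrow$ QUE'' I would start from a parent space $\HSgen$ with isometries $\map{\iota_n}{\HS_n}{\HSgen}$ and set $J_n := \iota_\infty^*\iota_n$, as already suggested in \Remenum{weidmann}{weidmann.d}. Writing $P_n := \iota_n\iota_n^*$ for the orthogonal projection onto $\ran\iota_n$, the isometry relations $\iota_n^*\iota_n = \id_{\HS_n}$ give the algebraic identities $\id_{\HS_n}-J_n^*J_n = \iota_n^*(\id-P_\infty)\iota_n$ and $\id_{\HS_\infty}-J_nJ_n^* = \iota_\infty^*(\id-P_n)\iota_\infty$, and $\norm{J_n}\le1$ makes \eqref{eq:que1} trivial. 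The key observation is that $(\id-P_\infty)\iota_nR_n\iota_n^* = (\id-P_\infty)D_n$, because $(\id-P_\infty)\iota_\infty=0$; since for $f\in\HS_n$ one has $\iota_nR_nf = (\iota_nR_n\iota_n^*)(\iota_nf)$, this upgrades to $\norm[\Lin{\HS_n,\HSgen}]{(\id-P_\infty)\iota_nR_n}\le\delta_n$, and the symmetric computation with $P_n$ controls the second estimate in \eqref{eq:que2}. Finally the identity $\iota_\infty(R_\infty J_n - J_nR_n) = (\id-P_\infty)\iota_nR_n - D_n\iota_n$, together with the fact that $\iota_\infty$ is an isometry, yields \eqref{eq:que3} with bound $2\delta_n$. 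When $z_0\notin\R$ the same arguments applied to $D_n^*$ (whose norm equals $\delta_n$) give the adjoint versions \eqref{eq:que32}. Thus Weidmann convergence with speed $(\delta_n)_n$ implies QUE-convergence with speed $(2\delta_n)_n$.

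For the converse ``QUE $\Rightarrow$ Weidmann'' I would first record that the QUE estimates force $\sup_n\norm{R_n}<\infty$ (from \eqref{eq:que2} one gets $\norm{R_n}\le\norm{J_n}\norm{J_nR_n}+\delta_n$, and \eqref{eq:que3} bounds $\norm{J_nR_n}$ by $\norm{R_\infty}\norm{J_n}+\delta_n$). This lets me replace $J_n$ by the contraction $J_n/(1+\delta_n)$ at the cost of enlarging the speed to $C\delta_n$, so I may assume $\norm{J_n}\le1$ and hence $\id_{\HS_n}-J_n^*J_n\ge0$. I then take as parent space $\HSgen := \HS_\infty\oplus\HSaux$, with $\HSaux$ large enough to carry isometries $\map{V_n}{\HS_n}{\HSaux}$ (e.g.\ $\HSaux=\bigoplus_{n\in\N}\HS_n$), and define $\iota_\infty g := (g,0)$ and $\iota_n f := \bigl(J_nf,\,W_nf\bigr)$ with $W_n := V_n(\id-J_n^*J_n)^{1/2}$. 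A direct computation gives $\iota_n^*\iota_n = J_n^*J_n + (\id-J_n^*J_n) = \id_{\HS_n}$, so these are genuine isometries, and $\iota_\infty R_\infty\iota_\infty^* = R_\infty\oplus0$.

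It then remains to bound $\delta_n^{\mathrm{Weid}}:=\norm{D_n}$, which I would do by splitting $D_n(g,h)$ into the ``diagonal'' part $J_nR_nJ_n^*g-R_\infty g$ and the ``off-diagonal'' parts involving $W_n$. The diagonal part is controlled exactly as in the motivating \Ex{motivating.example2}: using \eqref{eq:que3} to replace $J_nR_n$ by $R_\infty J_n$ and then \eqref{eq:que2} to replace $R_\infty J_nJ_n^* = R_\infty\bigl(\id-(\id-J_nJ_n^*)\bigr)$ by $R_\infty$ gives $\norm{J_nR_nJ_n^*-R_\infty}\le2\delta_n$, up to the rescaling constants and, for $z_0\notin\R$, the adjoint estimates \eqref{eq:que32}. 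Every off-diagonal term carries a factor $(\id-J_n^*J_n)^{1/2}$, and the elementary bound $\norm{(\id-J_n^*J_n)^{1/2}R_n}^2\le\norm{R_n}\,\norm{(\id-J_n^*J_n)R_n}\le C\delta_n$ shows these terms are of order $\sqrt{\delta_n}$. Combining, $\norm{D_n}\le C\sqrt{\delta_n}\to0$, which is Weidmann convergence.

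The main obstacle is this second direction. Unlike the isometries $\iota_n$, the identification operators $J_n$ are only \emph{quasi}-unitary, so one must both manufacture honest isometries into an enlarged parent space and absorb the resulting $(\id-J_n^*J_n)^{1/2}$-corrections; this is precisely where the speed can deteriorate from $\delta_n$ to $\sqrt{\delta_n}$. For the mere \emph{equivalence} of the two notions (as opposed to the sharper equal-speed statement announced in the abstract) it is enough that $\sqrt{\delta_n}\to0$, so the remaining work is only careful bookkeeping of constants and the systematic use of the adjoint estimates \eqref{eq:que32} to treat a complex resolvent point $z_0$.
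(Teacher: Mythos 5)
Your argument is correct and follows essentially the same route as the paper: for ``Weidmann $\Rightarrow$ QUE'' you compress $D_n$ by the isometries exactly as in \Thm{main1}, and for the converse you reproduce the paper's four steps --- the uniform resolvent bound of \Lem{res.est}, the rescaling to contractions of \Prp{eq-que}, the parent space built from the defect operators $(\id-J_n^*J_n)^{1/2}$ as in \Def{parent.hs}, and the $\sqrt{\delta_n}$-loss on the off-diagonal terms as in \Thm{main2a}. The only cosmetic differences are that you decompose $D_n$ componentwise in the concrete parent space rather than via the projections $P_n,P_\infty$ (cf.\ \Rem{dn.direct}) and pick up a harmless factor $2$ in \eqref{eq:que3}.
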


We will give the proof in four steps.
\begin{itemize}
\item The \textbf{first step} in the proof of \Thm{main} is showing
  the rather simple fact that \emph{Weidmann's convergence implies
    QUE-convergence} in \Thm{main1}.  Before showing this, we collect
  some facts about (partial) isometries in \Sec{compare}.

\item In a \textbf{second step}, we show that \emph{QUE-convergence
    implies Weidmann's convergence} (\Thm{main2a}) \emph{assuming that
    a parent space $\HS$ and isometries $\map{\iota_n}{\HS_n}\HS$
    factorising the identification operators exist}, i.e., we have
  $\map {J_n}{\HS_n}{\HS_\infty}$ with $J_n=\iota_\infty^*\iota_n$ for all
  $n \in \N$.  If
  \begin{equation}
    \label{eq:proj.commute}
    P_nP_\infty =P_\infty P_n \qquad\text{for all $n \in \N$},
  \end{equation}
  where $P_n=\iota_n \iota_n^*$ is the orthogonal projection onto the
  range of $\iota_n$, then the convergence speed has the same order in
  both cases (\Thm{main2b}), while for general identification
  operators $J_n$, there is a slight loss in the convergence speed.
  The condition~\eqref{eq:proj.commute} is equivalent with the fact
  that $P_n P_\infty$ (or $P_\infty P_n$) is an orthogonal projection
  (cf.~\cite[Satz.~2.55 (a)]{weidmann:00}.  For \emph{subspaces}
  $\HS_n \subset \HS$ ($n \in \Nbar$), the
  condition~\eqref{eq:proj.commute} is equivalent with
  \begin{equation*}
    \bigl(\HS_n \ominus (\HS_n \cap \HS_\infty)\bigr)
    \perp
    \bigl(\HS_\infty \ominus (\HS_n \cap \HS_\infty)\bigr)
  \end{equation*}
  (cf.~\cite[Aufgabe~2.22, Satz 2.55]{weidmann:00}) where
  $\HS' \ominus \HS''$ is the orthogonal complement of
  $\HS'' \subset \HS'$ in $\HS'$.  In our motivating example,
  $P_n=\1_{X_n,X}$ (multiplication with the indicator function
  $\map{\1_{X_n,X}}X{\{0,1\}}$), and the commuting
  condition~\eqref{eq:proj.commute} is fulfilled.

  Moreover, we characterise whether $J_n$ is a partial isometry in
  terms of Weidmann's data $(\map {\iota_n}{\HS_n}\HS)_{n \in \Nbar}$
  (cf.~\Thm{proj.comm.jn.iso}): namely $J_n$ is a partial isometry if
  and only if~\eqref{eq:proj.commute} holds.  The commuting property
  is therefore an \emph{invariant} among all parent spaces factorising
  the identification operators, cf.\ \Cor{proj.comm.all}.

\item As \textbf{third step}, we show in \Sec{gen.case} that \emph{a
    parent space can always be constructed} from the QUE-data
  (\Thm{dreamworld.exists}), provided that the identification
  operators $J_n$ are \emph{contractions} ($\norm {J_n} \le 1$).  The
  so-called \emph{defect operators} constructed from the
  identification operators play a prominent role in the construction
  of a parent space.  As a consequence, QUE-convergence implies
  Weidmann's convergence without the assumption that a parent space
  exists (\Cor{main2a}).  Moreover, we give further equivalent
  characterisations when the identification operators $J_n$ are
  partial isometries in terms of the defect operators
  (\Thm{j.proj.com}).

\item In a \textbf{last step} of the proof of \Thm{main}, we consider
  the general case (i.e., that $\norm{J_n}>1$ for some $n$) in
  \Subsec{change.id} and \Thm{main2}.
\end{itemize}
\Sec{examples} contains different types of examples, some where a
parent space is naturally given, and some where such a parent space is
not naturally given.  We end this introductory section with a
motivating example, more comments on existing literature and some
further comments.

\begin{remarks}[on the main theorem]
  \label{rem:main}
  \indent
  \begin{enumerate}
  \item The main result \Thm{main} remains true also for
    non-self-adjoint operators $\D_n$ and $\D_\infty$; for the
    necessary changes see \Rem{non-sa}.  For the sake of simplicity,
    the proofs are written for the self-adjoint case only.
  \item The focus in this work lies on the identification operators
    $J_n$ ($n \in \N$); the operator domains of $\D_n$ and $\D_\infty$
    are rather irrelevant for our analysis here; we only use these
    operators through their resolvents $R_n:=(\D_n - z_0)^{-1}$ and
    $R_\infty:=(\D_\infty - z_0)^{-1}$ and their adjoints.
  \item We present a method in \Sec{gen.case} how to construct a
    parent space for Weidmann's generalised norm resolvent convergence
    also in less obvious cases such as thick graphs converging to a
    metric graph (see \Subsecs{met.graphs}{met.graphs2}) or a sequence
    of discrete graphs converging to a pcf fractal (see
    \Subsec{disc.graphs}).
  \item If the projection commuting property~\eqref{eq:proj.commute}
    does not hold, then there is in general a loss in the convergence
    speed when passing from QUE-convergence to Weidmann's generalised
    norm resolvent convergence.  To avoid this loss, it seems to be
    better to use a slightly stronger estimate in the QUE-convergence,
    see \Rems{change.of.que-def}{change.of.que-def.ex}.
  \end{enumerate}
\end{remarks}

\subsection{More comments on existing literature}
\label{ssec:ex.lit}
It is impossible to mention even a small amount of relevant literature
concerning resolvent convergence on varying spaces, as it includes
e.g.\ all types of finite-dimensional approximations of any
infinite-dimensional problem.  Let us at least comment on a concept
weaker than the one considered here, namely the generalised
\emph{strong} resolvent convergence:

\subsubsection*{Generalised strong resolvent convergence and Stummel's
  discrete convergence}

Strong convergence of the resolvents (i.e., the pointwise convergence
of the operator resolvents) in Weidmann's setting means that
\begin{equation*}
  D_n f
  = \iota_n R_n \iota_n^*f - \iota_\infty R_\infty \iota_\infty^* f
\end{equation*}
converges to $0$ in $\HS$.  As already mentioned,
Stummel~\cite{stummel:71} introduced an abstract concept of (strong)
convergence of operators acting in different Banach spaces (see
also~\cite{stummel:76,vainikko:77} and references therein).  A
\emph{discrete approximation} of a Hilbert space $\HS_\infty$ by a
sequence of Hilbert spaces $\HS_n$ in the sense of Stummel is given by
a linear map $\map R {\HS_\infty}{\prod_{n \in \N}\HS_n/{\sim}}$ (not
to be confused with a resolvent) such that
$\norm[\HS_n]{u_n} \to \norm[\HS_\infty]{u_\infty}$ as $n \to \infty$
for all $u_\infty \in \HS_\infty$ and all $[(u_n)_n] \in R(u_\infty)$.
Here, $\sim$ is the equivalence relation given by
$(u_n)_n \sim (v_n)_n$ if $\norm[\HS_n]{u_n-v_n} \to 0$.  Stummel then
defines the \emph{discrete convergence} $u_n \to u_\infty$ if
$[(u_n)_n]=Ru_\infty$ (\cite[Sec.~1.1~(4)]{stummel:71}).
\begin{itemize}
\item Given Weidmann's setting,
  $R u_\infty=[(\iota_n^* \iota_\infty u_\infty)_{n \in \N}]$ defines
  a discrete approximation provided
  \begin{equation}
    \label{eq:proj.conv.strongly}
    P_n \to P_\infty \qquad \text{strongly.}
  \end{equation}
  In particular $u_n \to u_\infty$ (\emph{discrete convergence} in the
  sense of Stummel) means that
  $\norm[\HS_n]{u_n-\iota_n^*\iota_\infty u_\infty} \to 0$.
  If~\eqref{eq:proj.conv.strongly} holds, the latter is also
  equivalent with the more natural condition
  $\norm[\HS]{\iota_n u_n - \iota_\infty u_\infty}\to 0$ (``do
  everything in the parent space'').

\item Given identification operators $(J_n)_{n \in \N}$
  satisfying~\eqref{eq:que1}--\eqref{eq:que2} then one can show that a
  discrete approximation is given by
  $Ru_\infty=[(J_n^* u_\infty)_{n \in \N}]$.  In particular,
  $u_n \to u_\infty$ in the sense of Stummel if and only if
  $\norm[\HS_n]{u_n-J_n^*u_\infty} \to 0$.
\end{itemize}
A sequence $(S_n)_n$ of bounded operators on $\HS_n$ \emph{converges
  discretely} to a bounded operator $S_\infty$ on $\HS_\infty$ (in the
sense of Stummel~\cite[Sec.~1.2~(2)]{stummel:71}) if
$u_n \to u_\infty$ implies $S_n u_n \to S_\infty u_\infty$.
\begin{itemize}
\item The strong convergence of Weidmann~\eqref{eq:weidmanns_diff} is
  equivalent with Stummel's notion of discrete convergence
  $S_n \to S_\infty$ provided~\eqref{eq:proj.conv.strongly} holds.
\item If $R u_\infty:=[(J_n^* u_\infty)_n]$, then $S_n \to S_\infty$
  in the sense of Stummel is equivalent with the fact that
  \begin{equation*}
    \norm[\HS_n]{u_n-J_n^*u_\infty} \to 0
    \quadtext{implies}
    \norm[\HS_n]{S_nu_n-J_n^*S_\infty u_\infty} \to 0.
  \end{equation*}
  If $A_n \gnrc A_\infty$ in the QUE-setting, then
  \begin{equation*}
    \norm[\HS_n]{R_nu_n-J_n^*R_\infty u_\infty}
    \le \norm[\HS_n]{R_n(u_n - J_n^* u_\infty)}
    + \norm[\HS_n]{(R_nJ_n^*-J_n^*R_\infty) u_\infty)}
    \to 0,
  \end{equation*}
  as $(R_nu)_n$ is \emph{consistent} by \Lem{res.est}, i.e.,
  $(\norm[\Lin{\HS_n}]{R_n})_n$ is bounded.  In particular,
  $A_n \gnrc A_\infty$ implies the discrete convergence of the
  resolvents $R_n \to R_\infty$ in the sense of Stummel.
\end{itemize}
B\"ogli~\cite{boegli:17}  considers also non-self-adjoint operators
and is mainly interested in convergence of spectra and in particular,
the non-existence of spectral pollution.  She shows also how to
upgrade generalised strong resolvent convergence to generalised norm
resolvent convergence~\cite[Thm.~2.7 and Prp.~2.13]{boegli:17} under
some compactness assumptions on the resolvents.  Note that B\"ogli
assumes that the strong convergence of the
projections~\eqref{eq:proj.conv.strongly} holds.  If one constructs a
parent space according to \Sec{gen.case} from the QUE-data, and
if~\eqref{eq:que2} holds, then~\eqref{eq:proj.conv.strongly} holds
automatically (see \Prp{proj.conv.strongly}).

For abstract results on (mostly) strong resolvent convergence in the
context of \emph{homogenisation}, we refer to the references presented
in~\cite[Sec.~1]{khrabustovskyi-post:18}.

\subsubsection*{Concepts related with the QUE-convergence}
Another concept of convergence of operators acting in different
Hilbert spaces and related with the QUE-setting is given
in~\cite[Sec.~2.2--2.7]{kuwae-shioya:03}.  Kuwae and Shioya consider
families of Hilbert spaces and identification operators between any
two members of the family.  They define a version of generalised
\emph{strong} resolvent convergence.
In~\cite[Thm.~2.4]{kuwae-shioya:03} they prove that their strong
resolvent convergence is equivalent with Mosco-convergence of
quadratic forms.  They apply their results to convergence of families
of manifolds (where the limit is not necessarily a manifold any more).
Another abstract approach which is applied to so-called \emph{dumbbell
  domains} is given in~\cite[Sec.~4]{aclc:06}; here, the authors use a
scaled measure on the shrinking thin part of the dumbbell domain.

\subsubsection*{Domain perturbations}
Rauch and Taylor~\cite{rauch-taylor:75} embed the Hilbert spaces
$\Lsqr{X_n}$ and $\Lsqr{X_\infty}$ into the common Hilbert space
$\Lsqr{\R^d}$ by extending functions by $0$ for
$X_n,X_\infty \subset \R^d$ as in~\Ex{motivating.example1}.  Rauch and
Taylor show (what Weidmann later called) generalised strong resolvent
convergence for Dirichlet and Neumann Laplacians under some
``convergence'' conditions on $X_n$ and $X_\infty$.  From this, Rauch
and Taylor conclude in~\cite[Thm.~1.2, Thm.~1.5]{rauch-taylor:75}
(although not explicitly stated abstractly) convergence of operator
functions and convergence of the rank of spectral projections provided
the resolvent is compact.  The latter convergence implies convergence
of the spectra in the sense
of~\eqref{eq:conv.spec}, and in particular, convergence of the eigenvalues.

In~\cite[Sec.~3]{weidmann:84} Weidmann develops a preliminary version
of his generalised strong resolvent convergence based on a monotone
convergence theorem for quadratic forms shown by
Simon~\cite{simon:78}.  Weidmann applies his abstract results to
domain perturbations (\cite[Sec.~4]{weidmann:84}).
Stollmann~\cite{stollmann:95} generalises results
from~\cite{rauch-taylor:75,simon:78,weidmann:84} formulated in the
language of Dirichlet forms.

Daners considers pseudo-resolvents (cf.~\eqref{eq:pseudo.res}) given
by $\iota_n R_n(z) \iota_n^*$ as in Weidmann's approach even in Banach
spaces, where $R_n(z)=(A_n - z)^{-1}$ is a usual resolvent.  Moreover,
he shows upper semi-continuity of the spectrum,
cf.~\cite[Sec.~4]{daners:08} and references therein.  Additionally, he
gives equivalent conditions under which generalised strong and norm
resolvent convergence for Dirichlet Laplacians on $X_n \subset \R^d$
``converging'' to $X_\infty \subset \R^d$ holds, cf.~\cite[Thm.~5.2.4
and Thm.~5.2.6]{daners:08}.  For example, the strong resolvent
convergence holds provided $\Sobn{X_n} \to \Sobn {X_\infty}$ in the
sense of~\cite[Sec.~1]{mosco:69}, i.e., for any $u \in \Sobn X$ and
$n \in \N$ there exists $u_n \in \Sobn {X_n}$ such that
$\norm[\Sob {\R^d}] {u-u_n} \to 0$.  For some recent results on domain
perturbations using a concept close to our identification operators in
the QUE-setting we refer to~\cite{arrieta-lamberti:17} and the
references therein.

We plan to address \emph{generalised strong resolvent convergence} in
its different appearances in a subsequent publication.

\subsection{Some further comments and outlook}
\label{ssec:outlook}

\begin{remark}[change of common resolvent element]
  \label{rem:res-point}
  Note that $A_n \gnrcW A_\infty$ holds for \emph{some}
  $z_0 \in \Gamma := \bigcap_{n \in \Nbar} \rho(A_n)$ if and only if
  $A_n \gnrcW A_\infty$ holds for \emph{any} $z_0 \in \Gamma$
  (cf.~\cite[Satz~9.28]{weidmann:00}).

  Similarly, it can be seen that $A_n \gnrc A_\infty$ for \emph{some}
  $z_0 \in \Gamma$ and $\conj z_0 \in \Gamma$ if and only if
  $A_n \gnrc A_\infty$ for \emph{any} $z_0 \in \Gamma$.  In both
  cases, the convergence speed changes only by a factor depending on
  the common resolvent elements.
\end{remark}

\begin{remark}[extension to non-self-adjoint operators]
  \label{rem:non-sa}
  Both convergence concepts extend to non-self-adjoint (closed)
  operators $A_n$ ($n \in \Nbar$).  Weidmann's concept directly
  applies to closed operators $A_n$ (with resolvents
  $R_n:=(A_n-z_0)^{-1}$ for some
  $z_0 \in \bigcap_{n \in \Nbar} \rho(A_n)$) as done
  e.g.~in~\cite{boegli-siegl:14,boegli:17,boegli:18} (see also
  references therein); note that
  \begin{equation}
    \label{eq:dn-adj}
    \norm[\Lin{\HSgen}]
    {\iota_n R_n^* \iota_n^* - \iota_\infty R_\infty^* \iota_\infty^*}
    =\norm[\Lin{\HSgen}]
      {\iota_n R_n \iota_n^* - \iota_\infty R_\infty \iota_\infty^*}.
  \end{equation}
  For the concept of QUE-convergence in the non-self-adjoint case, we
  assume that~\eqref{eq:que2}--\eqref{eq:que3} hold for $R_n$
  \emph{and} $R_n^*=(\D_n^*-\conj z_0)^{-1}$ resp.\ $R_\infty$
  \emph{and} $R_\infty^*=(\D_\infty ^*-\conj z_0)^{-1}$.

  In particular, our main result \Thm{main} remains true with these
  modifications.
\end{remark}

\begin{remark}[a distance in Weidmann's concept]
  As in the concept of QUE-convergence, one can also define a distance
  in Weidmann's concept if there are two operators $A_1$ and $A_2$
  acting in $\HS_1$ and $\HS_2$ with resolvents $R_1=(A_1-z_0)^{-1}$
  and $R_2=(A_2-z_0)^{-1}$, respectively.  Here, one assumes that
  there is a parent Hilbert space $\HS$ and isometries
  $\map {\iota_1}{\HS_1}\HS$ and $\map {\iota_2}{\HS_2}\HS$, and the
  \emph{distance} of $R_1$ and $R_2$ is then
  \begin{equation*}
    \norm[\Lin \HS]{\iota_1 R_1 \iota_1^*-\iota_2 R_2 \iota_2^*}.
  \end{equation*}
  Of course, the parent space is not unique, and one could define the
  distance $R_1$ and $R_2$ as the infimum of all parent spaces $\HS$
  and isometries $\map {\iota_1}{\HS_1}\HS$ and
  $\map {\iota_2}{\HS_2}\HS$; a similar idea also holds for the
  QUE-concept as in \Def{que}.

  We will treat such questions in a forthcoming publication, where we
  also underline some optimality properties of the concrete parent
  space constructed in \Sec{gen.case}.
\end{remark}

\begin{remark}[extension to Banach spaces]
  Many of the concepts extend to operators acting in Banach spaces.
  Weidmann's convergence in this setting could be understood as
  \begin{equation*}
    \norm[\Lin{\mathscr X}]{
      \iota_n R_n \pi_n- \iota_\infty R_\infty \pi_\infty}
    \to 0
  \end{equation*}
  as $n \to \infty$ for bounded (or resolvents of unbounded) operators
  $R_n \in \Lin{\mathscr X_n}$ in Banach spaces $\mathscr X_n$
  ($n \in \Nbar$).  Here, $\map {\iota_n}{\mathscr X_n}{\mathscr X}$
  is an isometry (an injective partial isometry) into another Banach
  space $\mathscr X$ and $\map {\pi_n}{\mathscr X}{\mathscr X_n}$ a
  co-isometry (a surjective partial isometry); partial isometries are
  analysed e.g.\ in~\cite{mbektha:04}.

  The QUE-convergence could be generalised as follows: Assume that
  $\map{J_n}{\mathscr X_n}{\mathscr X_\infty}$ and
  $\map{J_n'}{\mathscr X_\infty}{\mathscr X_n}$ are operators
  fulfilling~\eqref{eq:que2}--\eqref{eq:que3} with
  $\HS_n$ replaced by $\mathscr X_n$ and $J_n^*$ replaced by $J_n'$.

  We will treat such convergences and their relation in a forthcoming
  publication.
\end{remark}

\section{Relation of the two concepts}
\label{sec:compare}
%

\subsection{Partial isometries, operator norms and parent spaces}
\label{ssec:from.gen.remarks}
Let us first prove some material needed for our analysis.  All our
Hilbert spaces here are assumed to be \emph{separable}.

Let $\HS$ and $\HS_0$ be two Hilbert spaces.  An \emph{isometry} is
a linear operator $\map \iota {\HS_0}\HS$ such that
$\norm[\HS]{\iota f_0}=\norm[\HS_0]{f_0}$ for all $f_0 \in \HS_0$.
Equivalently, $\iota$ is an isometry if and only if
$\iota^* \iota=\id_{\HS_0}$.  Moreover, a surjective isometry is
unitary, i.e., a Hilbert space isomorphism.

A \emph{partial isometry} is a linear operator $\map I {\HS_0} \HS$
such that
\begin{equation}
  \label{eq:def.part.iso}
  \map {I \restr {(\ker I)^\perp}}{(\ker I)^\perp}\HS
\end{equation}
is an isometry.  We call $(\ker I)^\perp=I^*(\HS)$ the \emph{initial
  space} and $I(\HS_0)=(\ker I^*)^\perp$ the \emph{final space} of
$I$.  Note that $I(\HS_0)$ is closed as isometric image of the
complete space $(\ker I)^\perp$; a similar argument holds for
$I^*(\HS)$.

\begin{lemma}[facts about partial isometries]
  \label{lem:part.iso}
  Let $\map I {\HS_0}\HS$ be an bounded linear operator. Then the
  following statements are equivalent.
  \begin{enumerate}
  \item
    \label{part.iso.a}
    $I$ is a partial isometry.
  \item
    \label{part.iso.b}
    $I^*$ is a partial isometry.
  \item
    \label{part.iso.c}
    One of the following equations is true:
    \begin{equation*}
      I=II^*I
      \quadiff
      I^*=I^*II^*
      \quadiff
      I^*I=(I^*I)^2
      \quadiff
      II^*=(II^*)^2.
    \end{equation*}
    The third resp.\ fourth assertion say that $I^*I$ resp.\ $II^*$
    are orthogonal projections onto the initial resp.\ final space of
    $I$.

  \item
    \label{part.iso.d}
    The initial space of $I$ is characterised by
    \begin{equation*}
      (\ker I)^\perp
      =I^*(\HS)=
      \set{f_0 \in \HS_0}{\norm[\HS]{If_0}=\norm[\HS_0]{f_0}}.
    \end{equation*}
  \item
    \label{part.iso.e}
    The final space of $I$ is characterised by
    \begin{equation*}
      (\ker I^*)^\perp
      =I(\HS_0)=
      \set{f\in \HS}{\norm[\HS_0]{I^*f}=\norm[\HS]f}.
    \end{equation*}
  \end{enumerate}
\end{lemma}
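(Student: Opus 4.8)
The plan is to route all equivalences through the algebraic condition~(c), which serves as the natural hub. First I would record two standing facts valid for any bounded $I$: that $\ker I = \ker(I^*I)$ (if $I^*If_0=0$ then $\normsqr[\HS]{If_0}=\iprod{I^*If_0}{f_0}=0$, and the converse is trivial), and that $(\ker I)^\perp = \overline{I^*(\HS)}$. Next I would show that the four equations in~(c) are mutually equivalent by pure manipulation: taking adjoints turns $I=II^*I$ into $I^*=I^*II^*$; multiplying $I=II^*I$ on the left by $I^*$ gives $I^*I=(I^*I)^2$; and conversely, writing $P:=I^*I$, if $P=P^2$ then
\begin{equation*}
  \normsqr[\HS]{(I-IP)f_0}
  = \iprod{(1-P)f_0}{(P-P^2)f_0}
  = 0
\end{equation*}
forces $I=IP=II^*I$. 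The remaining pair $I^*=I^*II^*\iff II^*=(II^*)^2$ is the same statement applied to $I^*$.

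For (a) $\Leftrightarrow$ (c) I would argue as follows. If $I$ is a partial isometry, then polarising the isometry identity on $(\ker I)^\perp$ gives $\iprod{I^*If_0}{g_0}=\iprod{f_0}{g_0}$ for $f_0,g_0\in(\ker I)^\perp$, while $\iprod{I^*If_0}{g_0}=\iprod{If_0}{Ig_0}=0$ for $g_0\in\ker I$; hence $I^*I$ is the orthogonal projection onto $(\ker I)^\perp$, so in particular $I^*I=(I^*I)^2$, which is~(c). Conversely, if $I^*I$ is idempotent then, being self-adjoint, it is an orthogonal projection; since $\ker(I^*I)=\ker I$ its range is $(\ker I)^\perp$, and for $f_0\in(\ker I)^\perp$ one gets $\normsqr[\HS]{If_0}=\iprod{I^*If_0}{f_0}=\normsqr[\HS_0]{f_0}$, so $I$ is a partial isometry. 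Because~(c) is symmetric under $I\leftrightarrow I^*$, applying this same equivalence to $I^*$ yields (b) $\Leftrightarrow$ (c), and hence (a) $\Leftrightarrow$ (b) $\Leftrightarrow$ (c).

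It remains to incorporate~(d) and~(e). For (a) $\Rightarrow$ (d): once $I^*I$ is the projection onto $(\ker I)^\perp$, its range gives $(\ker I)^\perp\subseteq I^*(\HS)\subseteq\overline{I^*(\HS)}=(\ker I)^\perp$, so $I^*(\HS)=(\ker I)^\perp$ (this is where the closedness of the final space from the definition enters); and decomposing $f_0=f_0'+f_0''$ along $(\ker I)^\perp\oplus\ker I$ with $\norm[\HS]{If_0}=\norm[\HS_0]{f_0'}$ shows that $\norm[\HS]{If_0}=\norm[\HS_0]{f_0}$ holds exactly when $f_0''=0$, so the equality set is precisely $(\ker I)^\perp$. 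The implication (d) $\Rightarrow$ (a) is immediate, since (d) in particular forces every $f_0\in(\ker I)^\perp$ to satisfy $\norm[\HS]{If_0}=\norm[\HS_0]{f_0}$, which is the definition of a partial isometry. Finally (b) $\Leftrightarrow$ (e) is obtained by applying the already-proved equivalence (a) $\Leftrightarrow$ (d) to $I^*$ in place of $I$, noting $(I^*)^*=I$, so that (d) for $I^*$ reads verbatim as~(e).

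The only genuinely delicate step is (a) $\Leftrightarrow$ (c): one must verify that $I^*I$ is not merely idempotent but the \emph{correct} orthogonal projection, which rests on the identity $\ker(I^*I)=\ker I$ together with the polarisation argument. Everything else is either a short adjoint computation or a direct consequence of these.
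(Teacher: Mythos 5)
Your proof is correct, but its architecture differs from the paper's. You make (c) the hub: you first show the four identities in (c) are mutually equivalent by pure algebra (taking adjoints, left-multiplying by $I^*$, and the neat identity $\lVert (I-II^*I)f_0\rVert^2=\langle(1-P)f_0,(P-P^2)f_0\rangle$ with $P=I^*I$ for the converse direction), then prove (a)$\iff$(c) via the observation that $I^*I$ is the orthogonal projection onto $(\ker I)^\perp$ (using $\ker(I^*I)=\ker I$ and polarisation), and finally obtain (b) from the $I\leftrightarrow I^*$ symmetry of (c). The paper instead proves (a)$\iff$(b) directly from the definition by a case analysis on $\ker I$ versus $(\ker I)^\perp$, and then threads a single implication cycle (a)$\implies$(c, first eq.)$\implies$(c, third eq.)$\implies$(b)$\implies$(c, second eq.)$\implies$(c, fourth eq.)$\implies$(a); the mutual equivalence of the four identities in (c) is thus only obtained as a byproduct of traversing the whole cycle, whereas you establish it up front and self-containedly. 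Your projection viewpoint also does double duty: the fact that $\ran(I^*I)=(\ker I)^\perp$ immediately yields the sandwich $(\ker I)^\perp\subseteq I^*(\HS)\subseteq\overline{I^*(\HS)}=(\ker I)^\perp$ in (d), where the paper argues both inclusions of (d) by separate norm computations. Both treatments of (d)/(e) otherwise coincide (direct for one direction, $I\leftrightarrow I^*$ symmetry for the dual statement). One tiny quibble: your parenthetical "this is where the closedness of the final space from the definition enters" is not really needed — your sandwich argument proves the closedness of $I^*(\HS)$ rather than using it.
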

\begin{proof}
  \itemref{part.iso.a}$\implies$ \itemref{part.iso.b}: Let
  $g \in (\ker I^*)^\perp=I(\HS_0)$, then $g=If_0$ for a
  $f_0 \in \HS_0$. If $f_0 \in \ker I$, then $I^*g=I^*If_0 = 0$ and
  $g \in (\ker I^*)\cap (\ker I^*)^\perp=\{0\}$. If on the other hand
  $f_0 \in (\ker I)^\perp$, then
  $\norm[\HS]{I^*g}=\norm[\HS_0]{I^*I f_0}=\norm[\HS_0]{f_0} =
  \norm[\HS]{If_0}=\norm[\HS]{g}$.
  Therefore $I^*$ is a partial isometry.

  \itemref{part.iso.b}$\implies$\itemref{part.iso.a}: This follows
  from the first implication and the fact, that $(I^*)^*=I$.

  \itemref{part.iso.a}$\implies$\itemref{part.iso.c} (first
  equation: If $f_0 \in \ker I$, it is clear that:
  $If_0=0=II^*If_0$. If $f_0 \in (\ker I)^\perp$, we have
  $II^*If_0=I \id_{\HS_0}f_0$.

  \itemref{part.iso.c} (first equation) $\implies$
  \itemref{part.iso.c} (third equation) is clear by multiplication from
  the left with $I^*$.

  \itemref{part.iso.c} (third equation) $\implies$
  \itemref{part.iso.b}: Let $f_0 \in \HS_0$ then
  \begin{equation*}
    \norm[\HS]{If_0}^2
    =\iprod[\HS_0]{f_0}{I^*If}
    =\iprod[\HS_0]{f_0}{(I^*I)^2f_0}
    =\normsqr[\HS_0]{I^*If_0},
  \end{equation*}
  and therefore $I^*$ is an isometry on $I(\HS_0)=(\ker(I^*))^\perp$.

  \itemref{part.iso.b}$\implies$\itemref{part.iso.c} (second equation)
  $\implies$ (fourth equation) $\implies$\itemref{part.iso.a}
  can be shown analogously.

  \itemref{part.iso.a}$\implies$\itemref{part.iso.d} ''$\subseteq$'':
  Let $f_0 \in (\ker I)^\perp$.  Then
  $\norm[\HS] {If_0} = \norm[\HS_0]{f_0}$ by~\eqref{eq:def.part.iso}.
  ''$\supseteq$'': Let $f_0 \in \HS_0$ with
  $\norm[\HS]{If_0}=\norm[\HS_0]{f_0}$.  We have
  \begin{align*}
    \normsqr[\HS_0]{f_0-I^*If_0}
    &=\normsqr[\HS_0]{(\id_{\HS_0}-I^*I)f_0}
      =\iprod[\HS_0]{(\id_{\HS_0}-I^*I)f_0}{(\id_{\HS_0}-I^*I)f_0}\\
    &=\iprod[\HS_0]{f_0}{(\id_{\HS_0}-I^*I)f_0}
      = \normsqr[\HS_0]{f_0}-\normsqr[\HS]{If_0}=0
  \end{align*}
  Thus $f_0 \in I^*(\HS_0)$.

  \itemref{part.iso.d}$\implies$\itemref{part.iso.a} is given
  directly by \eqref{eq:def.part.iso}. Analogously we get
  \itemref{part.iso.b}$\quadiff$ \itemref{part.iso.e}.  In summary all
  statements characterise the notion of a partial isometry.
\end{proof}

An isometry is hence a partial isometry with maximal initial space, or
equivalently, an injective partial isometry.

A \emph{co-isometry} $\map \pi \HS {\HS_0}$ is the adjoint of an
isometry, i.e., $\pi^*$ is an isometry.  Equivalently, $\pi$ is a
co-isometry if and only if $\pi \pi^*=\id_\HS$.  A co-isometry is a
partial isometry with maximal final space, or equivalently, a
surjective partial isometry.

Let us now provide some simple facts about isometries and
co-isometries.
\begin{lemma}[(co-)isometries and operator norms]
  \label{lem:iso-coiso}
  Let $A \in \Lin{\HS_1,\HS_2}$ and let
  $\map{\iota_j}{\HS_j}{\wt \HS_j}$ be isometries for $j \in \{1,2\}$.
  Then we have
  \begin{equation*}
    \norm[\Lin{\HS_1,\HS_2}] A
    = \norm[\Lin{\HS_1,\wt \HS_2}] {\iota_2 A}
    = \norm[\Lin{\wt \HS_2,\HS_1}] {A^*\iota_2^*}
    = \norm[\Lin{\wt \HS_2,\wt \HS_1}] {\iota_1 A^* \iota_2^*}
  \end{equation*}
  Moreover, if $\HS_1=\HS_2$ and if $A=A^*$, then all the above
  equalities hold with $A^*$ replaced by $A$.
\end{lemma}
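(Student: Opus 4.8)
The statement to prove is Lemma~\ref{lem:iso-coiso}, which asserts for an isometry $\map{\iota_j}{\HS_j}{\wt\HS_j}$ ($j \in \{1,2\}$) and $A \in \Lin{\HS_1,\HS_2}$ the chain of equalities
\begin{equation*}
  \norm[\Lin{\HS_1,\HS_2}] A
  = \norm[\Lin{\HS_1,\wt\HS_2}]{\iota_2 A}
  = \norm[\Lin{\wt\HS_2,\HS_1}]{A^*\iota_2^*}
  = \norm[\Lin{\wt\HS_2,\wt\HS_1}]{\iota_1 A^* \iota_2^*},
\end{equation*}
together with the self-adjoint addendum. The plan is to establish each equality separately and then concatenate them.

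First I would prove $\norm{A} = \norm{\iota_2 A}$. Composing with an isometry on the left does not change the norm, because an isometry preserves lengths of vectors: for every $f \in \HS_1$ we have $\norm[\wt\HS_2]{\iota_2 A f} = \norm[\HS_2]{A f}$ using $\iota_2^*\iota_2 = \id_{\HS_2}$ (the isometry characterisation recalled just before the lemma). Taking the supremum over the unit sphere of $\HS_1$ gives the first equality; no suprema need to be compared or estimated one-sidedly, since the pointwise equality is exact. Next I would use the general fact $\norm{B} = \norm{B^*}$ for any bounded operator to pass from $\iota_2 A$ to its adjoint: $\norm[\Lin{\HS_1,\wt\HS_2}]{\iota_2 A} = \norm[\Lin{\wt\HS_2,\HS_1}]{(\iota_2 A)^*} = \norm[\Lin{\wt\HS_2,\HS_1}]{A^*\iota_2^*}$, which yields the second equality. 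For the third equality I would again apply the ``left composition with an isometry'' principle, now with $\iota_1$ acting on the operator $A^*\iota_2^*$, giving $\norm{A^*\iota_2^*} = \norm{\iota_1 A^*\iota_2^*}$ by the identical pointwise-norm-preservation argument.

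For the final addendum, when $\HS_1 = \HS_2$ and $A = A^*$, I would simply substitute $A$ for $A^*$ throughout, since in that case the two operators coincide; the equalities already proved then read verbatim with $A^*$ replaced by $A$.

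I do not expect any genuine obstacle here: the whole lemma reduces to the two elementary principles that an isometry preserves norms (so left multiplication by an isometry leaves the operator norm unchanged) and that an operator and its adjoint share the same norm. The only point requiring a small amount of care is bookkeeping of the Hilbert spaces and the direction of each operator, to ensure that the adjoint $(\iota_2 A)^* = A^* \iota_2^*$ is formed on the correct spaces and that each norm subscript matches the domain and codomain of the operator it measures.
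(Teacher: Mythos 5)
Your proposal is correct and follows essentially the same route as the paper: the first equality by pointwise norm preservation of the isometry $\iota_2$, the second by passing to the adjoint ($\norm{B}=\norm{B^*}$), and the third by applying the first principle again with $\iota_1$. No gaps.
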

\begin{proof}
  The first equality follows from
  \begin{equation*}
    \norm[\Lin{\HS_1,\HS_2}] A
    = \sup_{f_1 \in \HS_1 \setminus \{0\}}
        \frac{\norm[\HS_2]{Af_1}}{\norm[\HS_1]{f_1}}
    = \sup_{f_1 \in \HS_1 \setminus \{0\}}
        \frac{\norm[\wt \HS_2]{\iota_2 Af_1}}{\norm[\HS_1]{f_1}}.
  \end{equation*}
  For the second note that
  \begin{equation*}
    \norm[\Lin{\wt \HS_2,\HS_1}] {A^* \iota_2^*}
    = \norm[\Lin{\HS_1,\wt \HS_2}] {\iota_2 A}
    = \norm[\Lin{\HS_1, \HS_2}] A
  \end{equation*}
  by taking the adjoint and using the first equality
  appropriately. The last equality is a consequence of the first two.
\end{proof}
Note that when the isometry is on the right or the co-isometry is on
the left, we have only the trivial estimate
\begin{equation}
  \label{eq:norm-iso-coiso}
  \norm[\Lin{\wt \HS_1,\wt \HS_2}]{\iota_2^*A\iota_1}
  \le \norm[\Lin{\HS_1,\HS_2}] A
\end{equation}
and the inequality can be strict (e.g.\ if $A \ne 0$, $\HS_1=\{0\}$ and
$\iota_1=0$), see also \Rem{only_dreamworld}.

Let us now formulate an important assumption in Weidmann's convergence
(see \Def{gnrc-weid}):
\begin{definition}[(minimal) parent spaces and their isometries]
  \label{def:dreamworld}
  Let $(\HS_n)_{n \in \Nbar}$ be a sequence of Hilbert spaces.
  \begin{enumerate}
  \item
    \label{dreamworld.a}
    We say that a Hilbert space $\HSgen$ is a \emph{parent space} for
    $(\HS_n)_{n \in \Nbar}$, if there are isometries
    $\map{\iota_n}{\HS_n}{\HS}$ for $n \in \Nbar$; we call
    $(\map{\iota_n}{\HS_n}{\HS})_{n \in \Nbar}$ the corresponding
    \emph{isometries} and $\map{P_n:=\iota_n \iota_n^*}\HSgen\HSgen$
    the corresponding \emph{orthogonal projections} onto the range of
    $\iota_n$.

  \item
    \label{dreamworld.b}
    We say that the corresponding isometries
    $(\map{\iota_n}{\HS_n}{\HS})_{n \in \Nbar}$ \emph{factorise the
      identification operators
      $(\map {J_n}{\HS_n}{\HS_\infty})_{n \in \N}$)} if the
    factorisation $J_n=\iota_\infty^* \iota_n$ holds for $n \in \N$
    (cf.\ \Fig{M1}).
\item
    \label{dreamworld.c}
    Given a parent space $\HS$ with isometries
    $\map {\iota_n}{\HS_n}\HS$ ($n \in \Nbar$) we call
    \begin{equation}
      \label{eq:parent.hs.min}
      \HSmin = \clolin \bigcup_{n \in \Nbar} \iota_n(\HS_n)
    \end{equation}
    the \emph{minimal parent space} associated with
    $(\map{\iota_n}{\HS_n}\HS)_{n \in \Nbar}$.  Here, $\clolin M$
    denotes the closure of the linear span of $M \subset \HS$.
  \end{enumerate}
\end{definition}
The notion \emph{minimal} parent space reflects the fact that with
$\HSmin$ as parent space instead of $\HS$, the maps
$\map {\wt \iota_n}{\HS_n}\HSmin$ are still isometries for each
$n \in \Nbar$.  Moreover, if $f \in \HSmin^\perp=\HS \ominus \HSmin$,
then $f \in (\iota_n(\HS_n))^\perp=\ker \iota_n^*$, i.e.,
$\iota_n^*f=0$ for all $n \in \Nbar$.  In particular, whatever happens
outside $\HSmin$ is not relevant for any objects involving $\iota_n$
and $\iota_n^*$ and $n \in \Nbar$.

\begin{remark}[parent spaces and identification operators]
  \label{rem:dreamworld}
  \indent
  \begin{enumerate}
  \item
    \label{dreamworld.1}
    If $\HSgen$ is a parent space with corresponding isometries
    factorising $(J_n)_n$, then each $J_n$ necessarily is a
    \emph{contraction}, i.e.,
    $\norm[\Lin{\HS_n,\HS_\infty}]{J_n} \leq 1$ for all $n \in \N$, as
    $J_n$ factorises in a co-isometry and an isometry, both with
    operator norm not greater than $1$.
  \item
    \label{dreamworld.2}
    Given a parent space, the corresponding isometries
    $(\map{\iota_n}{\HS_n}{\HS})_{n \in \Nbar}$ clearly \emph{always}
    factorise the identification operators $(J_n)_n$ given by
    \begin{equation*}
      J_n := \iota_\infty^* \iota_n
    \end{equation*}
    for $n \in \N$.  We call $(J_n)_n$ the \emph{identification
      operators associated with the parent space $\HSgen$}.
  \item
    \label{dreamworld.3}
    Note that a naive choice of a parent space would be
    \begin{equation*}
      \HS:=\HS_\infty \oplus \bigoplus_{n \in \N} \HS_n, \qquad
      \iota_\infty f_\infty=(f_\infty,0,\dots), \qquad
      \iota_n f_n = (0,\dots, 0, f_n, 0, \dots).
    \end{equation*}
    But in this case, the identification operators would be $J_n=0$.
    Moreover, Weidmann's resolvent difference
    (cf.~\eqref{eq:def.dn}) always fulfils
    \begin{equation*}
      \normsqr[\HS]{D_n f}
      =\normsqr[\HS_n]{R_nf_n}+\normsqr[\HS_\infty]{R_\infty f_\infty}
    \end{equation*}
    for $f=(f_\infty,f_1,\dots)\in \HS$, and this expression will not
    converge to $0$.  We therefore have to use more elaborated
    isometries, relating the spaces $\HS_n$ and $\HS_\infty$ in an
    appropriate way especially on those subspaces where their
    resolvents are close to each other.  We construct such a parent
    space for \emph{given} identification operators
    $\map {J_n}{\HS_n}{\HS_\infty}$ in \Subsec{concrete.iso}.

    We see in \Rem{conv.invariant} how to characterise Weidmann's
    convergence using parent spaces factorising the identification
    operators.
  \end{enumerate}
\end{remark}

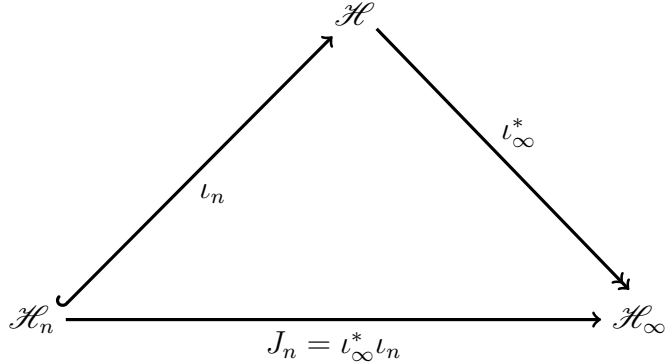
\begin{figure}[h]
  \centering
  \begin{tikzpicture}
    \node (A) at (0,0)  {$\HS_n$};
    \node (B) at (4.25,4.05) {$\HSgen$};
    \node (C) at (8,0)  {$\HS_\infty$};
    \node (A') at (0,0.25) {};
    \node (B') at (3.75,4) {};
    \node (A2) at (0.25,0) {};
    \node (B2) at (4.1,3.9) {};
    \draw[right hook->, very thick] (A2) to node [below] {\quad $\iota_n$} (B2);
    \node (C2) at (7.75,0) {};
    \node (C') at (8, 0.25) {};
    \node (B3) at (4.4, 4) {};
    \draw[->>, very thick] (B3) to  node [above] {\quad $\iota_\infty^*$} (C');
    \draw[->,  very thick] (A) to  node [below] {$J_n=\iota_\infty^*\iota_n$ }  (C);
  \end{tikzpicture}
  \caption{Factorising $J_n$ over $\HS$ via an isometry $\iota_n$ and
    a co-isometry $\iota_\infty^*$.}
  \label{fig:M1}
\end{figure}

Given a parent space $\HSgen$ with corresponding isometries
$(\map{\iota_n}{\HS_n}{\HS})_{n \in \Nbar}$ and associated
identification operators $J_n := \iota_\infty^* \iota_n$ and bounded
operators $R_n$ acting in $\HS_n$ ($n \in \Nbar$), we abbreviate their
difference as used in Weidmann's convergence by
\begin{equation}
  \label{eq:def.dn}
  \map{D_n}{\HSgen}{\HSgen}, \qquad
  D_n := \iota_n R_n \iota_n^* - \iota_\infty R_\infty \iota_\infty^*.
\end{equation}
Moreover we denote by
\begin{equation}
  \label{eq:def.pn}
  \map{P_n:= \iota_n \iota_n^*} \HSgen \HSgen
  \qquadtext{and}
  \map{P_\infty:= \iota_\infty \iota_\infty^*} \HSgen \HSgen
\end{equation}
the orthogonal projections in $\HSgen$ onto $\iota_n(\HS_n)$ resp.\
$\iota_\infty(\HS_\infty)$.  As $\iota_n$ and $\iota_\infty$ are
isometries, we clearly have
\begin{equation}
  \label{eq:isom.iota}
  \iota_n^* \iota_n = \id_{\HS_n}
  \qquadtext{and}
  \iota_\infty^* \iota_\infty = \id_{\HS_\infty}.
\end{equation}
We denote by $P_n^\perp:= \id_\HSgen-P_n$ and
$P_\infty^\perp:= \id_\HSgen-P_\infty$ the corresponding complementary
orthogonal projections.  If a parent space exists we have the
following equalities for $D_n$, $P_n$ and $P_\infty$ as defined above:
\begin{lemma}
  \label{lem:formelsammlung}
  Let $\HSgen$ be a parent space with corresponding isometries
  factorising the identification operators
  $(\map {J_n}{\HS_n}{\HS_\infty})_{n \in \N}$.  Then we have the
  following simple equalities for $P_n$, $P_\infty$ and $D_n$:
  \begin{subequations}
    \label{eq:formelsammlung}
    \begin{align}
      \label{eq:Ps_und_Iotas}
      P_n\iota_n&=\iota_n, \quad
      \iota_n^*P_n=\iota_n^*,\quad
      P_\infty\iota_\infty=\iota_\infty, \quad
      \iota_\infty^*P_\infty=\iota_\infty^*, \\
      \label{eq:Pskom_und_Iotas}
      P^\perp_n \iota_n &= 0, \quad
      \iota_n ^*P^\perp_n  = 0, \quad
      P^\perp_\infty\iota_\infty = 0, \quad
       \iota_\infty ^*P^\perp_\infty  = 0,\\
      \label{eq:dn_in_3teile}
      D_n &= P_\infty D_n P_n + P_\infty^\perp D_n P_n + P_\infty D_n P_n^\perp.
     \end{align}
   \end{subequations}
 \end{lemma}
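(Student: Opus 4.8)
The plan is to treat the three groups of identities in order, since the later ones rest on the earlier ones. The first group \eqref{eq:Ps_und_Iotas} is pure bookkeeping: unfolding $P_n=\iota_n\iota_n^*$ and using the isometry relation $\iota_n^*\iota_n=\id_{\HS_n}$ from \eqref{eq:isom.iota}, I would compute $P_n\iota_n=\iota_n(\iota_n^*\iota_n)=\iota_n$ and $\iota_n^*P_n=(\iota_n^*\iota_n)\iota_n^*=\iota_n^*$. The $\iota_\infty$-versions are identical after replacing the index $n$ by $\infty$ and invoking the second relation in \eqref{eq:isom.iota}, namely $\iota_\infty^*\iota_\infty=\id_{\HS_\infty}$.

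The second group \eqref{eq:Pskom_und_Iotas} then follows instantly by writing $P_n^\perp=\id_\HSgen-P_n$: for instance $P_n^\perp\iota_n=\iota_n-P_n\iota_n=\iota_n-\iota_n=0$, and likewise for $\iota_n^*P_n^\perp$, $P_\infty^\perp\iota_\infty$ and $\iota_\infty^*P_\infty^\perp$, each reducing to one of the already-established identities of \eqref{eq:Ps_und_Iotas}.

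For the decomposition \eqref{eq:dn_in_3teile}, the idea is to sandwich $D_n$ between two resolutions of the identity, $\id_\HSgen=P_\infty+P_\infty^\perp$ on the left and $\id_\HSgen=P_n+P_n^\perp$ on the right. Expanding the product $D_n=(P_\infty+P_\infty^\perp)D_n(P_n+P_n^\perp)$ yields four terms, three of which are exactly those appearing on the right-hand side of \eqref{eq:dn_in_3teile}; so everything reduces to showing that the remaining corner term $P_\infty^\perp D_n P_n^\perp$ vanishes. Here I would use the explicit form $D_n=\iota_n R_n \iota_n^*-\iota_\infty R_\infty \iota_\infty^*$ from \eqref{eq:def.dn}: in the first summand the factor $\iota_n^* P_n^\perp=0$ annihilates it from the right, while in the second summand the factor $P_\infty^\perp\iota_\infty=0$ annihilates it from the left, both by \eqref{eq:Pskom_und_Iotas}. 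Hence $P_\infty^\perp D_n P_n^\perp=0$ and the three-term identity follows.

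There is no real obstacle here — the only point requiring a moment's thought is recognising that it is precisely the $P_\infty^\perp(\cdot)P_n^\perp$ corner that drops out. This is the structural statement that $D_n$ has no component carrying the orthogonal complement of $\iota_n(\HS_n)$ into the orthogonal complement of $\iota_\infty(\HS_\infty)$, which in turn merely reflects the fact that each summand of $D_n$ already takes values in, and is supported on, the range of the relevant projection.
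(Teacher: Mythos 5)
Your proposal is correct and follows essentially the same route as the paper: \eqref{eq:Ps_und_Iotas} and \eqref{eq:Pskom_und_Iotas} are read off from $\iota_n^*\iota_n=\id_{\HS_n}$ and $\iota_\infty^*\iota_\infty=\id_{\HS_\infty}$, and \eqref{eq:dn_in_3teile} reduces to the vanishing of the corner term $P_\infty^\perp D_n P_n^\perp$, which both you and the paper obtain from $\iota_n^*P_n^\perp=0$ and $P_\infty^\perp\iota_\infty=0$ applied to the two summands of $D_n$. Your write-up is, if anything, slightly more explicit than the paper's one-line justification.
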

\begin{proof}
  \eqref{eq:Ps_und_Iotas} and~\eqref{eq:Pskom_und_Iotas} follow
  directly from~\eqref{eq:isom.iota}. \eqref{eq:dn_in_3teile} is a
  direct consequence of $P_n^\perp D_n P_\infty^\perp=0$ as
  $P_\infty^\perp \iota_\infty=0$ and $\iota_n^* P_n^\perp=0$.
\end{proof}

\subsection{Weidmann's convergence implies QUE-convergence}
\label{ssec:from.wei.to.post2}

Let us now assume that we are in the situation of Weidmann's
convergence (\Def{gnrc-weid}).  In particular, $\HSgen$ is a parent
space with corresponding isometries
$(\map{\iota_n}{\HS_n}{\HS})_{n \in \Nbar}$ and associated
identification operators $(J_n)_n$ with
$J_n:= \iota_\infty^* \iota_n$.  In this case, we have the following
result:
\begin{theorem}[Weidmann's convergence implies QUE-convergence]
  \label{thm:main1}
  Let $\D_n$ be a self-adjoint operator in a Hilbert space $\HS_n$ for
  $n \in \Nbar$ such that $\D_n \gnrcW \D_\infty$ with convergence
  speed $(\delta_n)_n$.  Then $\D_n \gnrc \D_\infty$ with the same
  speed $(\delta_n)_n$.
\end{theorem}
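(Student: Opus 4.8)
The plan is to verify the three quasi-unitary estimates \eqref{eq:que1}--\eqref{eq:que3} for the associated identification operators $J_n=\iota_\infty^*\iota_n$ directly from the single hypothesis $\norm[\Lin{\HSgen}]{D_n}=\delta_n$, where $D_n$ is as in \eqref{eq:def.dn}. Estimate \eqref{eq:que1} comes for free: since $J_n$ factorises through the parent space it is a contraction (\Remenum{dreamworld}{dreamworld.1}), so $\norm{J_n}\le 1\le 1+\delta_n$. The other two I would obtain by \emph{compressing} $D_n$ with the isometries $\iota_n,\iota_\infty$ in the appropriate slots and invoking the trivial norm bound \eqref{eq:norm-iso-coiso}, which says that a co-isometry on the left and an isometry on the right can only decrease the operator norm.

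For the off-diagonal estimate \eqref{eq:que3}, I would compress $D_n$ by $\iota_\infty^*$ on the left and $\iota_n$ on the right. Using $\iota_n^*\iota_n=\id_{\HS_n}$ and $\iota_\infty^*\iota_\infty=\id_{\HS_\infty}$ from \eqref{eq:isom.iota}, the two terms of $D_n$ collapse to
\begin{equation*}
  \iota_\infty^* D_n \iota_n
  = \iota_\infty^*\iota_n R_n - R_\infty \iota_\infty^*\iota_n
  = J_n R_n - R_\infty J_n .
\end{equation*}
Hence $\norm[\Lin{\HS_n,\HS_\infty}]{R_\infty J_n-J_nR_n}=\norm{\iota_\infty^* D_n \iota_n}\le \norm[\Lin{\HSgen}]{D_n}=\delta_n$ by \eqref{eq:norm-iso-coiso}.

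For the defect estimates \eqref{eq:que2}, I would first rewrite the defects via the projections: from the definitions one has $\id_{\HS_n}-J_n^*J_n=\iota_n^* P_\infty^\perp \iota_n$ and $\id_{\HS_\infty}-J_nJ_n^*=\iota_\infty^* P_n^\perp \iota_\infty$. The crucial input is then the cancellations $P_\infty^\perp\iota_\infty=0$ and $P_n^\perp\iota_n=0$ from \eqref{eq:Pskom_und_Iotas}: compressing once more, in each case one of the two terms of $D_n$ is annihilated by the orthogonal projection, so that
\begin{align*}
  \iota_n^* P_\infty^\perp D_n \iota_n &= (\id_{\HS_n}-J_n^*J_n)R_n , \\
  \iota_\infty^* P_n^\perp D_n \iota_\infty &= -(\id_{\HS_\infty}-J_nJ_n^*)R_\infty .
\end{align*}
Since $\norm{P_\infty^\perp D_n}\le\delta_n$ and $\norm{P_n^\perp D_n}\le\delta_n$ and the outer factors are a co-isometry and an isometry, \eqref{eq:norm-iso-coiso} again yields both bounds in \eqref{eq:que2} with constant $\delta_n$.

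This establishes $\D_n \gnrc \D_\infty$ with the same speed when $z_0\in\R$. For a genuinely complex $z_0$ one must additionally check the adjoint versions of \eqref{eq:que2}--\eqref{eq:que3}; these follow verbatim from the same three identities applied to $D_n^*=\iota_n R_n^*\iota_n^*-\iota_\infty R_\infty^*\iota_\infty^*$, using $\norm{D_n^*}=\norm{D_n}=\delta_n$ from \eqref{eq:dn-adj}. I do not expect a genuine obstacle here: the whole argument is bookkeeping with the identities of \Lem{formelsammlung}. The only points needing care are keeping track of which slot each isometry occupies, so that the inequality \eqref{eq:norm-iso-coiso} (not an equality) is the tool that applies, and remembering to treat the adjoint resolvents in the complex case.
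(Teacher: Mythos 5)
Your proposal is correct and follows essentially the same route as the paper's own proof: the same three compressions $\iota_\infty^* D_n \iota_n$, $\iota_n^* P_\infty^\perp D_n \iota_n$ and $\iota_\infty^* P_n^\perp D_n \iota_\infty$, the same cancellations from \eqref{eq:Pskom_und_Iotas}, the contraction property of $J_n$ for \eqref{eq:que1}, and the identity \eqref{eq:dn-adj} for the adjoint resolvents in the non-real $z_0$ case. No gaps.
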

\begin{proof}
  Clearly, the operator norm of $J_n$ is not greater than $1$,
  hence~\eqref{eq:que1} is satisfied with $\delta_n=0$.
  Moreover, we want to express $(\id_{\HS_n}-J_n^*J_n)R_n$ appearing
  in~\eqref{eq:que2} in terms of $D_n$.  We have
  \begin{align*}
    \iota_n^*P_\infty^\perp D_n \iota_n
    =\iota_n^*(\id_\HSgen-\iota_\infty^*\iota_\infty)
       \iota_n R_n \iota_n^* \iota_n
    =( \id_{\HS_n}-J_n^*J_n)R_n.
  \end{align*}
  using~\eqref{eq:Ps_und_Iotas} for the first equality
  and~\eqref{eq:isom.iota} for the second.  Similarly, we can express
  the second operator in~\eqref{eq:que2} as
  \begin{equation*}
    -\iota_\infty^*P_n^\perp D_n \iota_\infty
    =\iota_\infty^* P_n^\perp \iota_\infty R_\infty \iota_\infty^* \iota_\infty
    = (\id_{\HS_\infty} - J_n J_n^*)R_\infty.
  \end{equation*}
  Finally, for~\eqref{eq:que3} we have
  \begin{align*}
        \iota_\infty^* D_n \iota_n = J_nR_n-R_\infty J_n.
  \end{align*}
  As all operators appearing in~\eqref{eq:gnrc-que} can be factorised
  into $D_n$ and operators with norm not greater than $1$, we obtain
  the estimates required in~\eqref{eq:que1}--\eqref{eq:que3} with
  respect to the common resolvent element $z_0$.  Similarly, replacing
  $R_n$ by $R_n^*$ and using~\eqref{eq:dn-adj}, we see
  that~\eqref{eq:que1}--\eqref{eq:que3} also hold for $R_n$ replaced
  by $R_n^*$ for all $n \in \Nbar$.
  \end{proof}

\subsection{QUE-convergence implies Weidmann's convergence if a parent
  space exists}
\label{from.pos.to.wei}
To prove Weidmann's conditions using quasi-unitary equivalence, we
face two main difficulties:
\begin{itemize}
\item Can we always construct a parent space $\HSgen$ factorising
  \emph{given} identification operators
  $(\map {J_n}{\HS_n}{\HS_\infty})_{n \in \N}$?

\item If such a decomposition exists, is it possible to use
  QUE-convergence such that the operator norm of $D_n$ from Weidmann's
  concept is small?
\end{itemize}
In this subsection, we \emph{assume} that the first question is
answered affirmative, i.e., we assume that a parent space as
in \Def{dreamworld} exists.  We then answer the second question
affirmatively in this subsection in \Thm{main2a}.


Let us first express the operator norm of $D_n$ (resp.\ its three
summands in~\eqref{eq:dn_in_3teile}) in term of expressions from
QUE-convergence:
\begin{lemma}
  \label{lem:weid-from-que}
  For the norms of the three summands of $D_n$
  in~\eqref{eq:dn_in_3teile} we have
  \begin{subequations}
    \begin{align}
      \label{eq:norm.summand1}
      \norm[\Lin \HS]{P_\infty D_n P_n}
      &= \norm[\Lin{\HS_n,\HS_\infty}]{J_n R_n - R_\infty J_n}, \\
      \label{eq:norm.summand2}
      \norm[\Lin \HS]{P_\infty^\perp D_n P_n}
      &= \bigl(
        \norm[\Lin{\HS_n}]{R_n^*(\id_{\HS_n}-J_n^*J_n)R_n}
        \bigr)^{1/2},\\
      \label{eq:norm.summand3}
      \norm[\Lin \HS]{P_\infty D_n P_n^\perp}
      &= \bigl(
        \norm[\Lin{\HS_\infty}]{R_\infty^*(\id_{\HS_\infty}-J_nJ_n^*)R_\infty}
        \bigr)^{1/2}.
    \end{align}
  \end{subequations}
\end{lemma}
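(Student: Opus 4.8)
The plan is to handle the three summands of $D_n$ from \eqref{eq:dn_in_3teile} separately, in each case first simplifying the product with the elementary relations \eqref{eq:isom.iota}, \eqref{eq:Ps_und_Iotas} and \eqref{eq:Pskom_und_Iotas}, and then stripping off the remaining outer (co\nobreakdash-)isometries by means of \Lem{iso-coiso}.

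First I would rewrite each summand. The identity $\iota_\infty^* D_n \iota_n = J_n R_n - R_\infty J_n$ already established in the proof of \Thm{main1} gives, after inserting $P_\infty = \iota_\infty\iota_\infty^*$ and $P_n = \iota_n\iota_n^*$, the expression $P_\infty D_n P_n = \iota_\infty (J_n R_n - R_\infty J_n)\iota_n^*$. For the two off-diagonal terms I would use \eqref{eq:Pskom_und_Iotas}: since $P_\infty^\perp \iota_\infty = 0$, the part $\iota_\infty R_\infty \iota_\infty^*$ of $D_n$ is annihilated in $P_\infty^\perp D_n P_n$, leaving $P_\infty^\perp D_n P_n = P_\infty^\perp \iota_n R_n \iota_n^*$; dually, $\iota_n^* P_n^\perp = 0$ kills the part $\iota_n R_n \iota_n^*$ in $P_\infty D_n P_n^\perp$, leaving $P_\infty D_n P_n^\perp = -\iota_\infty R_\infty \iota_\infty^* P_n^\perp$.

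For \eqref{eq:norm.summand1} the outer maps are an isometry $\iota_\infty$ on the left and a co-isometry $\iota_n^*$ on the right, so the last equality of \Lem{iso-coiso} (applied to $J_nR_n - R_\infty J_n$) together with the invariance of the norm under taking adjoints gives $\norm[\Lin\HS]{P_\infty D_n P_n} = \norm[\Lin{\HS_n,\HS_\infty}]{J_n R_n - R_\infty J_n}$. For \eqref{eq:norm.summand2} I would use $\norm{S}^2 = \norm{S^*S}$ with $S = P_\infty^\perp \iota_n R_n \iota_n^*$: since $P_\infty^\perp$ is a self-adjoint projection the two copies meet, and the middle factor computes, via \eqref{eq:isom.iota}, to $\iota_n^* P_\infty^\perp \iota_n = \id_{\HS_n} - J_n^*J_n$, so that $S^*S = \iota_n\bigl(R_n^*(\id_{\HS_n}-J_n^*J_n)R_n\bigr)\iota_n^*$. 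The inner operator is self-adjoint, hence \Lem{iso-coiso} (its self-adjoint case) removes $\iota_n,\iota_n^*$ and yields exactly the square of the claimed bound. Summand \eqref{eq:norm.summand3} is entirely analogous, working with $\norm{T'}^2 = \norm{T'(T')^*}$ for $T' = -\iota_\infty R_\infty \iota_\infty^* P_n^\perp$ and using $\iota_\infty^* P_n^\perp \iota_\infty = \id_{\HS_\infty} - J_nJ_n^*$.

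The one point requiring care is the sidedness in \eqref{eq:norm.summand3}. Because the defect projection $P_n^\perp$ sits on the right of $T'$ (whereas $P_\infty^\perp$ sat on the left of $S$), the clean computation via $T'(T')^*$ produces $\norm[\Lin{\HS_\infty}]{R_\infty(\id_{\HS_\infty}-J_nJ_n^*)R_\infty^*}$, i.e.\ with $R_\infty$ and $R_\infty^*$ in the opposite order to the stated form. In the self-adjoint setting treated throughout (cf.\ \Rem{main}) one has $R_\infty^* = R_\infty$, so the two orderings coincide and the asserted equality follows; this is really the only place where self-adjointness (resp.\ the choice of a real common resolvent point) enters, the remaining steps being the bookkeeping of \eqref{eq:formelsammlung} and repeated use of \Lem{iso-coiso}.
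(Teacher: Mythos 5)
Your proof is correct and follows essentially the same route as the paper's: the same simplifications of the three summands via \eqref{eq:formelsammlung}, the identity $\norm{S}^2=\norm{S^*S}$ for the two off-diagonal terms (the paper writes $P_\infty^\perp\iota_n=\iota_n-\iota_\infty J_n$ and expands $(\iota_n-\iota_\infty J_n)^*(\iota_n-\iota_\infty J_n)=\id_{\HS_n}-J_n^*J_n$, which is the same computation as your $\iota_n^*P_\infty^\perp\iota_n$), and \Lem{iso-coiso} to remove the outer (co-)isometries. Your observation about the ordering of $R_\infty$ and $R_\infty^*$ in \eqref{eq:norm.summand3} is a valid point that the paper's ``similarly'' glosses over; in the self-adjoint setting with a real common resolvent point it is indeed immaterial.
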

\begin{proof}
  To prove~\eqref{eq:norm.summand1} we calculate
  \begin{equation}
    \label{eq:term1}
    P_\infty D_n P_n
    = \iota_\infty\iota_\infty^*
    (\iota_n R_n \iota_n^*-\iota_\infty R_\infty \iota_\infty^*)
    \iota_n\iota_n^*
    =\iota_\infty(J_n R_n -R_\infty J_n)\iota_n^*.
  \end{equation}
  Using \Lem{iso-coiso} we obtain the claimed norm
  equality~\eqref{eq:norm.summand1}.
  For~\eqref{eq:norm.summand2}, we first calculate
  \begin{equation*}
    P_\infty^\perp D_n P_n
    = (\id_\HSgen - \iota_\infty \iota_\infty^*)\iota_n R_n \iota_n^*
    = (\iota_n - \iota_\infty J_n) R_n \iota_n^*
  \end{equation*}
  using again~\eqref{eq:Pskom_und_Iotas}.  Moreover, we have
  \begin{align}
    \nonumber
    \normsqr[\HSgen]{P_\infty^\perp D_n P_n f}
    &=\bigiprod[\HSgen]{\iota_n R_n^*(\iota_n^* - J_n^* \iota_\infty)
      (\iota_n-\iota_\infty J_n)R_n \iota_n^* f} f\\
    \label{eq:term2}
    &=\bigiprod[\HSgen]{\iota_n R_n^*(\id_{\HS_n}-J_n^*J_n) R_n \iota_n^* f} f
  \end{align}
  for $f \in \HSgen$, where we used
  \begin{equation*}
    (\iota_n-\iota_\infty J_n)^*(\iota_n-\iota_\infty J_n)
    = \iota_n^*\iota_n
    - J_n^*\iota_\infty^*\iota_n
    - \iota_n^*\iota_\infty J_n + J_n^* \iota_\infty^* \iota_\infty J_n
    = \id_{\HS_n} - J_n^*J_n
  \end{equation*}
  for the last step.  Taking the supremum over $f \in \HS$ with
  $\norm[\HS]f=1$ we obtain
  \begin{equation*}
    \normsqr{P_\infty^\perp D_n P_n}
    =\norm{\iota_n R_n^*(\id_{\HS_n}-J_n^*J_n) R_n \iota_n^*}
    =\norm{R_n^*(\id_{\HS_n}-J_n^*J_n) R_n}
  \end{equation*}
  using~\eqref{eq:term2} and the fact that
  $\iota_n^*R_n^*(\id_{\HS_n}-J_n^*J_n)R_n \iota_n$ is self-adjoint
  (first equality) and again \Lem{iso-coiso} (second equality), we
  arrive at the operator norm equality~\eqref{eq:norm.summand2}.

  Similarly, we show~\eqref{eq:norm.summand3}.
 \end{proof}

\begin{remark}[A problem with the (co-)isometry being at the ``wrong''
  side]
  \label{rem:only_dreamworld}
  Note that the second and third
  summand~\eqref{eq:norm.summand2}--\eqref{eq:norm.summand3} of $D_n$,
  we do not have a norm equality in terms of the one from
  QUE-convergence \emph{without} square root.  A direct calculation
  shows that we also have
  \begin{equation*}
    (\id_{\HS_n}-J_n^* J_n)R_n
    =\iota_n^*(\id_\HSgen - \iota_\infty \iota_\infty^*) \iota_nR_n
    =\iota_n^* P_\infty^\perp \iota_nR_n \iota_n^* \iota_n
    =  \iota_n^* P_\infty^\perp D_n P_n\iota_n
  \end{equation*}
  using \Lem{formelsammlung}.  Taking the operator norm yields
  \begin{equation*}
    \norm[\Lin{\HS_n}]{(\id_{\HS_n}-J_n^* J_n)R_n}
    =\norm[\Lin{\HS_n}]{\iota_n^* P_\infty^\perp D_n P_n \iota_n}
    \le \norm[\Lin\HSgen]{P_\infty^\perp D_n P_n}
  \end{equation*}
  (``$\le$''holds, but not the needed ``$\ge$'').  Actually, the
  isometry and the co-isometry are on the ``wrong'' side of the
  operator $P_\infty^\perp D_n P_n$, see \Lem{iso-coiso}.

  Similarly, for the third term~\eqref{eq:norm.summand3} we have
  \begin{equation*}
    (\id_{\HS_\infty}-J_nJ_n^*)R_\infty
    =\iota_\infty^*(\id_\HSgen - \iota_n \iota_n^*) \iota_\infty R_\infty
    =\iota_\infty^* P_n^\perp \iota_ \infty R_\infty \iota_\infty^* \iota_\infty
    =  -\iota_\infty^* P_n^\perp D_n P_\infty \iota_\infty,
  \end{equation*}
  hence we again only have the ``wrong'' estimate
  \begin{equation*}
    \norm[\Lin{\HS_\infty}]{(\id_{\HS_\infty}-J_n J_n^*)R_\infty}
    =\norm[\Lin{\HS_\infty}]{\iota_\infty^* P_n^\perp D_n P_\infty \iota_\infty}
    \le \norm[\Lin\HSgen]{P_n^\perp D_n P_\infty}.
  \end{equation*}
 \end{remark}

Before proving that QUE-convergence implies Weidmann's convergence, we
need another technical result:
\begin{lemma}
  \label{lem:res.est}
  Assume that $\D_n \gnrc \D_\infty$ converges in generalised norm
  resolvent sense in the sense of \Def{gnrc-que} with identification
  operators $J_n$ fulfilling $\norm{J_n} \le 1$ and with convergence
  speed $(\delta_n)_n$.  Then we have
  \begin{equation}
    \label{ineq:OPNormR_n}
    \norm[\Lin{\HS_n}]{R_n}
    \le \norm[\Lin{\HS_\infty}]{R_\infty} +2\delta_n.
  \end{equation}
\end{lemma}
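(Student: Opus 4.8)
The plan is to estimate $\norm[\Lin{\HS_n}]{R_n}$ directly on $\HS_n$ by inserting the near-identity $J_n^* J_n$ and then intertwining through $R_\infty$. First I would split $R_n$ at no cost as
\begin{equation*}
  R_n = J_n^* J_n R_n + (\id_{\HS_n} - J_n^* J_n) R_n .
\end{equation*}
The second summand is immediately under control: by the first estimate in~\eqref{eq:que2} we have $\norm[\Lin{\HS_n}]{(\id_{\HS_n} - J_n^* J_n) R_n} \le \delta_n$.

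For the first summand the idea is to move $R_n$ past $J_n$ at the cost of~\eqref{eq:que3}. I would write $J_n R_n = R_\infty J_n - (R_\infty J_n - J_n R_n)$, so that
\begin{equation*}
  J_n^* J_n R_n = J_n^* R_\infty J_n - J_n^* (R_\infty J_n - J_n R_n) .
\end{equation*}
Here the hypothesis $\norm{J_n} \le 1$ (hence also $\norm{J_n^*} \le 1$) does the work twice: it bounds the ``good'' term by $\norm[\Lin{\HS_n}]{J_n^* R_\infty J_n} \le \norm{J_n^*}\,\norm[\Lin{\HS_\infty}]{R_\infty}\,\norm{J_n} \le \norm[\Lin{\HS_\infty}]{R_\infty}$, and it bounds the error term by $\norm{J_n^*}\,\norm[\Lin{\HS_n,\HS_\infty}]{R_\infty J_n - J_n R_n} \le \delta_n$ using~\eqref{eq:que3}. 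Adding the two summands via the triangle inequality then yields $\norm[\Lin{\HS_n}]{R_n} \le \norm[\Lin{\HS_\infty}]{R_\infty} + 2\delta_n$, as claimed.

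There is no genuine obstacle here: the whole argument is one application of the triangle inequality together with two of the three defining QUE-estimates. The only points needing a little care are bookkeeping ones, namely invoking the first estimate of~\eqref{eq:que2} (not the second), applying the contraction bounds $\norm{J_n}, \norm{J_n^*} \le 1$ on the correct side of each product so that no spurious factor $1+\delta_n$ creeps in, and respecting the sign convention of~\eqref{eq:que3} so that the intertwining error contributes exactly one $\delta_n$. Assembling these gives the stated factor of precisely $2\delta_n$.
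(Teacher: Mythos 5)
Your proposal is correct and follows exactly the same route as the paper: the identical three-term decomposition $R_n = (\id_{\HS_n}-J_n^*J_n)R_n + J_n^*(J_nR_n - R_\infty J_n) + J_n^*R_\infty J_n$, with the first two terms bounded by $\delta_n$ each via~\eqref{eq:que2} and~\eqref{eq:que3} and the last by $\norm{R_\infty}$ using the contraction property. Nothing to add.
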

\begin{proof}
  We have
  \begin{equation*}
    \norm{R_n}
    \le \norm{(\id_{\HS_n}-J_n^*J_n)R_n}
    + \norm{J_n^*(J_n R_n-R_\infty J_n)}
    + \norm{J_n^*R_\infty J_n}
    \le 2\delta_n + \norm{R_\infty}.
    \qedhere
  \end{equation*}
\end{proof}
We now prove our next main result:
\begin{theorem}[QUE-convergence implies Weidmann's if a parent space
  exists]
  \label{thm:main2a}
  Let $\D_n$ be a self-adjoint operator in a Hilbert space $\HS_n$ for
  $n \in \Nbar$.  If $\D_n \gnrc \D_\infty$ and if a parent space as
  in \Def{dreamworld} exists then $\D_n \gnrcW \D_\infty$
  Weidmann-converges with convergence speed
    \begin{equation}
    \label{eq:conv.speed}
    \wt \delta_n
    := \delta_n^{1/2} \cdot \Bigl(
         2\bigl(\norm[\Lin {\HS_\infty}]{R_\infty}+2\delta_n\bigr)^{1/2}
         + \delta_n^{1/2}
      \Bigr)
    \in \Err\big(\delta_n^{1/2}\big).
   \end{equation}
\end{theorem}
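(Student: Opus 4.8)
The plan is to estimate the parent-space resolvent difference $D_n$ directly, splitting it into the three pieces of \eqref{eq:dn_in_3teile} from \Lem{formelsammlung} and bounding each one by means of the norm identities in \Lem{weid-from-que} together with the QUE-estimates \eqref{eq:que1}--\eqref{eq:que3}. First I would observe that, a parent space factorising the $(J_n)_n$ being assumed to exist, \Remenum{dreamworld}{dreamworld.1} forces every $J_n$ to be a contraction; in particular the resolvent bound \eqref{ineq:OPNormR_n} of \Lem{res.est} applies. The triangle inequality applied to \eqref{eq:dn_in_3teile} then gives
\[
  \norm[\Lin\HS]{D_n}
  \le \norm[\Lin\HS]{P_\infty D_n P_n}
     + \norm[\Lin\HS]{P_\infty^\perp D_n P_n}
     + \norm[\Lin\HS]{P_\infty D_n P_n^\perp},
\]
so it suffices to control the three summands.

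For the first summand, \eqref{eq:norm.summand1} identifies it with $\norm[\Lin{\HS_n,\HS_\infty}]{J_n R_n - R_\infty J_n}$, which is precisely the quantity bounded by $\delta_n$ in \eqref{eq:que3}. For the second summand, \eqref{eq:norm.summand2} writes it as the square root of $\norm[\Lin{\HS_n}]{R_n^*(\id_{\HS_n}-J_n^*J_n)R_n}$; peeling off one resolvent factor and using $\norm[\Lin{\HS_n}]{R_n^*}=\norm[\Lin{\HS_n}]{R_n}$ together with \eqref{eq:que2} and \eqref{ineq:OPNormR_n} bounds the inner quantity by $\delta_n\bigl(\norm[\Lin{\HS_\infty}]{R_\infty}+2\delta_n\bigr)$, so the summand is at most $\delta_n^{1/2}\bigl(\norm[\Lin{\HS_\infty}]{R_\infty}+2\delta_n\bigr)^{1/2}$. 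The third summand is handled the same way via \eqref{eq:norm.summand3}, but now both resolvent factors are $R_\infty$, giving the cruder estimate $\delta_n^{1/2}\norm[\Lin{\HS_\infty}]{R_\infty}^{1/2}$.

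Summing the three bounds and using $\norm[\Lin{\HS_\infty}]{R_\infty}^{1/2}\le\bigl(\norm[\Lin{\HS_\infty}]{R_\infty}+2\delta_n\bigr)^{1/2}$ to merge the two square-root terms, I would obtain $\norm[\Lin\HS]{D_n}\le \delta_n + 2\delta_n^{1/2}\bigl(\norm[\Lin{\HS_\infty}]{R_\infty}+2\delta_n\bigr)^{1/2}$, which is exactly $\wt\delta_n$; since $\delta_n\to0$ keeps $\bigl(\norm[\Lin{\HS_\infty}]{R_\infty}+2\delta_n\bigr)^{1/2}$ bounded, the asymptotics $\wt\delta_n\in\Err(\delta_n^{1/2})$ follow at once. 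The one genuinely unavoidable point --- and thus the main obstacle --- is the square root appearing in the second and third summands: as already explained in \Rem{only_dreamworld}, there the (co-)isometries sit on the ``wrong'' side of $(\id-J_n^*J_n)R_n$, so these terms cannot be expressed linearly in the QUE-data, and this is exactly what degrades the convergence speed from order $\delta_n$ to order $\delta_n^{1/2}$.
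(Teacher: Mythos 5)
Your proposal is correct and follows essentially the same route as the paper's proof: the decomposition~\eqref{eq:dn_in_3teile}, the norm identities of \Lem{weid-from-que}, the resolvent bound of \Lem{res.est}, and the final merging of the two square-root terms all coincide with the argument given in the text. Your explicit remark that the existence of a factorising parent space forces $\norm{J_n}\le 1$ (so that \Lem{res.est} applies) is a point the paper uses only implicitly, but it does not change the proof.
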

\begin{proof}
  Assume that $\D_n \gnrc \D_\infty$ with convergence speed
  $\delta_n$.  We estimate the norms of the three summands of $D_n$
  (cf.~\eqref{eq:dn_in_3teile}) given in \Lem{weid-from-que} and obtain
  \begin{equation*}
    \norm{P_nD_nP_\infty}
    =\norm{J_n R_n^* - R_\infty^* J_n}
    \le \delta_n
  \end{equation*}
  for the first summand.  For the norm of the second
  summand~\eqref{eq:norm.summand2}--\eqref{eq:norm.summand3} we have
  \begin{subequations}
    \begin{align}
    \nonumber
    \norm[\Lin\HSgen]{P_\infty^\perp D_n P_n}
    &\le \bigl(
    \norm[\Lin{\HS_n}]{R_n} \norm[\Lin{\HS_n}]{(\id_{\HS_n}-J_n^*J_n)R_n}
      \bigr)^{1/2}\\
    \label{eq:main2a.est1}
    &\le \bigl(
    \norm[\Lin{\HS_n}]{R_n} \cdot \delta_n
    \bigr)^{1/2}
    \le \bigl(
    (\norm[\Lin{\HS_\infty}]{R_\infty} +2\delta_n)\cdot \delta_n
    \bigr)^{1/2}
  \end{align}
  using also \Lem{res.est}.  For the third summand, we have
  \begin{align}
    \label{eq:main2a.est2}
    \norm[\Lin\HSgen]{P_\infty D_n P_n^\perp}
    \leq \bigl(
    \norm[\Lin{\HS_\infty}]{R_\infty}
    \norm[\Lin{\HS_\infty}]{(\id_{\HS_\infty}-J_nJ_n^*)R_\infty}
    \bigr)^{1/2}
    \leq \bigl(
    \norm[\Lin{\HS_\infty}]{R_\infty}\cdot \delta_n
    \bigr)^{1/2}.
  \end{align}
  \end{subequations}
  In particular, we obtain the following norm estimate used in
  Weidmann's generalised norm resolvent convergence
  \begin{align*}
    \norm[\Lin\HSgen]{D_n}
    &\le \norm[\Lin\HSgen]{P_\infty D_n P_n}
      +\norm[\Lin\HSgen]{P_\infty^\perp D_n P_n}
      + \norm[\Lin\HSgen]{P_\infty D_n P_n^\perp }\\
    & \le \delta_n
      +  \bigl(\norm[\Lin {\HS_\infty}]{R_\infty}+2\delta_n)\delta_n \bigr)^{1/2}
      + \bigl(\norm[\Lin {\HS_\infty}]{R_\infty}\delta_n\bigr)^{1/2}\\
    & \le \delta_n^{1/2} \cdot
      \Bigl( \delta_n^{1/2} +
          2\bigl(\norm[\Lin {\HS_\infty}]{R_\infty}+2\delta_n\bigr)^{1/2}
      \Bigr)
      \qedhere
  \end{align*}
\end{proof}

\begin{remark}[characterisation of Weidmann's convergence in a parent
  space]
  \label{rem:conv.invariant}
  We can interpret \Thms{main1}{main2a} in the following way: The
  convergence $\norm{D_n} \to 0$ (i.e., Weidmann's generalised norm
  resolvent convergence) is equivalent with $\D_n \gnrc \D_\infty$
  among all parent spaces factorising the identification operators.
\end{remark}

\begin{remark}[defect operators]
  If the operator $\id_{\HS_n}-J_n^* J_n$ is a projection (i.e., if
  $\id_{\HS_n}-J_n^* J_n=(\id_{\HS_n}-J_n^* J_n)^2$), then we actually
  have
  \begin{align*}
    \normsqr[\Lin \HS]{P_\infty^\perp D_n P_n}
    = \norm[\Lin{\HS_n}]{R_n^*(\id_{\HS_n}-J_n^*J_n)^2R_n}
    = \normsqr[\Lin{\HS_n}]{(\id_{\HS_n}-J_n^*J_n)R_n},
  \end{align*}
  hence there is no square root needed in.  The so-called \emph{defect
    operator } $W_n=(\id_{\HS_n}-J_n^*J_n)^{1/2}$ plays an important
  role in \Subsec{concrete.iso} in the construction of a parent space.
  We also give equivalent characterisations when $W_n$ is an
  orthogonal projection in \Thm{j.proj.com}.  A similar remark holds
  for $(\id_{\HS_\infty}-J_nJ_n^*)$.
\end{remark}
\begin{remark}[a modified version of QUE-convergence]
  \label{rem:change.of.que-def}
  We see that we loose the convergence speed in the two ``bad''
  estimates~\eqref{eq:main2a.est1}--\eqref{eq:main2a.est2}.  Probably
  it is more appropriate to change slightly the definition of
  QUE-convergence and require that
  \begin{equation}
    \label{eq:que2'}
    \tag{\ref{eq:que2}'}
    \bignorm[\Lin {\HS_n}]{R_n^*(\id_{\HS_n}-J_n^*J_n)R_n}  \le \delta_n^2 \to 0,
    \quad
    \bignorm[\Lin {\HS_\infty}]{R_\infty^*(\id_{\HS_\infty}-J_nJ_n^*)R_\infty}
    \le \delta_n^2 \to 0
  \end{equation}
  holds in \Def{gnrc-que}.  Then $\D_n \gnrc \D_\infty$ (with the modified
  definition) with convergence speed $\delta_n$ would lead to
  $\D_n \gnrcW \D_\infty$ with convergence speed $3\delta_n$, as one
  can directly see from~\eqref{eq:dn_in_3teile} and
  \Lem{weid-from-que}.  If $\norm{R_n} \le 1$
  then~\eqref{eq:que2'} implies~\eqref{eq:que2}.

  Note that the first estimate in~\eqref{eq:que2'} is
  equivalent with
  \begin{equation}
    \label{eq:que2''}
    \normsqr[\HS_n]{f_n} - \normsqr[\HS_\infty]{J_n f_n}
    \le \delta_n^2 \normsqr[\HS_n]{(\D_n+1)f_n}
  \end{equation}
  for all $f_n \in \dom \D_n$.  Note that we always have
  $0 \le \normsqr[\HS_n]{f_n} - \normsqr[\HS_\infty]{J_n f_n}$ as
  $J_n$ is a contraction.  The original first estimate
  in~\eqref{eq:que2} is equivalent with
  \begin{equation}
    \label{eq:que2'''}
    \normsqr[\HS_n]{f_n - J_n^* J_n f_n}
    \le \delta_n^2 \normsqr[\HS_n]{(\D_n+1)f_n}
  \end{equation}
  for all $f_n \in \dom \D_n$.  Note that~\eqref{eq:que2''}
  implies~\eqref{eq:que2'''} (if $\norm{R_n} \le 1$, and with
  a slightly different $\delta_n$ in the general case, see
  \Lem{res.est}).  A similar remark holds for the second estimate
  in~\eqref{eq:que2'}.

  We have already seen in~\cite[Sec.~3.2]{khrabustovskyi-post:21}
  that the stronger estimate~\eqref{eq:que2''} is also more
  appropriate than the weaker one~\eqref{eq:que2'''} when
  showing spectral convergence
  (see~\cite[Thm.~3.5]{khrabustovskyi-post:21}).
\end{remark}

\subsection{QUE-convergence implies Weidmann's convergence: better
  convergence speed}
\label{from.pos.to.wei.speed}

As we have seen, a loss in the speed of convergence occurs, when
passing from QUE- to Weidmann's convergence in \Thm{main2a}.  Under a
commutator condition on the projections, the convergence speed in
Weidmann's convergence is the same as in QUE-convergence:

\begin{theorem}[QUE-convergence implies Weidmann's: better convergence speed]
  \label{thm:main2b}
  Let $\D_n$ be a self-adjoint operator in a Hilbert space $\HS_n$ for
  $n \in \Nbar$ such that $\D_n \gnrc \D_\infty$ QUE-converges with
  convergence speed $(\delta_n)_n$.  If furthermore a parent space as
  in \Def{dreamworld} exists and if the projections
  $P_n=\iota_n\iota_n^*$ and $P_\infty=\iota_\infty \iota_\infty^*$
  commute (i.e.,~\eqref{eq:proj.commute} holds) then $(\D_n)_n$
  converges in generalised norm resolvent in the sense of Weidmann
  with convergence speed $(3\delta_n)_n$.
\end{theorem}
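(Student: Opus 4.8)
The plan is to bound $\norm[\Lin\HSgen]{D_n}$ by the triangle inequality applied to the three-term splitting~\eqref{eq:dn_in_3teile}, estimating each summand via the identities of \Lem{weid-from-que}. In \Thm{main2a} the loss from $\delta_n$ to $\delta_n^{1/2}$ came \emph{solely} from the square roots in~\eqref{eq:norm.summand2} and~\eqref{eq:norm.summand3}. The key observation is that, under the commutation hypothesis~\eqref{eq:proj.commute}, the two defect operators $\id_{\HS_n}-J_n^*J_n$ and $\id_{\HS_\infty}-J_nJ_n^*$ are \emph{orthogonal projections}, and this is exactly what removes those square roots.

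First I would show that $\id_{\HS_n}-J_n^*J_n$ is an orthogonal projection. Writing $J_n=\iota_\infty^*\iota_n$ and using~\eqref{eq:isom.iota} together with $P_\infty=\iota_\infty\iota_\infty^*$, one computes
\begin{equation*}
  \id_{\HS_n}-J_n^*J_n
  = \iota_n^*\iota_n - \iota_n^*\iota_\infty\iota_\infty^*\iota_n
  = \iota_n^*(\id_\HSgen-P_\infty)\iota_n
  = \iota_n^* P_\infty^\perp \iota_n,
\end{equation*}
which is manifestly self-adjoint. For idempotency, inserting $\iota_n\iota_n^*=P_n$, then using the commutation $P_\infty^\perp P_n = P_n P_\infty^\perp$ (a consequence of~\eqref{eq:proj.commute}), and finally $\iota_n^*P_n=\iota_n^*$ from~\eqref{eq:Ps_und_Iotas}, one obtains
\begin{equation*}
  \bigl(\iota_n^* P_\infty^\perp \iota_n\bigr)^2
  = \iota_n^* P_\infty^\perp P_n P_\infty^\perp \iota_n
  = \iota_n^* P_n P_\infty^\perp \iota_n
  = \iota_n^* P_\infty^\perp \iota_n.
\end{equation*}
The same computation with the roles of $n$ and $\infty$ interchanged shows that $\id_{\HS_\infty}-J_nJ_n^* = \iota_\infty^* P_n^\perp \iota_\infty$ is an orthogonal projection as well.

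Finally I would conclude as follows. Setting $Q_n := \id_{\HS_n}-J_n^*J_n$, the projection property gives $R_n^*Q_nR_n = R_n^*Q_n^*Q_nR_n = (Q_nR_n)^*(Q_nR_n)$, so the square root in~\eqref{eq:norm.summand2} collapses:
\begin{equation*}
  \norm[\Lin\HSgen]{P_\infty^\perp D_n P_n}
  = \norm[\Lin{\HS_n}]{Q_nR_n}
  = \norm[\Lin{\HS_n}]{(\id_{\HS_n}-J_n^*J_n)R_n}
  \le \delta_n
\end{equation*}
by~\eqref{eq:que2}, and the analogous argument with~\eqref{eq:norm.summand3} gives $\norm[\Lin\HSgen]{P_\infty D_n P_n^\perp}\le\delta_n$. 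Together with $\norm[\Lin\HSgen]{P_\infty D_n P_n} = \norm[\Lin{\HS_n,\HS_\infty}]{J_nR_n-R_\infty J_n}\le\delta_n$ from~\eqref{eq:norm.summand1} and~\eqref{eq:que3}, the triangle inequality applied to~\eqref{eq:dn_in_3teile} yields $\norm[\Lin\HSgen]{D_n}\le 3\delta_n$, which is the asserted convergence speed. The only genuinely non-routine step is the idempotency check, whose entire content is the commutation~\eqref{eq:proj.commute}; everything else reduces to the triangle inequality and the norm identities already recorded in \Lem{weid-from-que}.
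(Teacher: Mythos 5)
Your proof is correct. It shares the paper's skeleton --- the three-term splitting \eqref{eq:dn_in_3teile}, the norm identities of \Lem{weid-from-que}, and a concluding triangle inequality --- but it handles the two problematic summands by a genuinely different mechanism. The paper uses the commutation \eqref{eq:proj.commute} to refactor the \emph{operator} itself, namely $P_\infty^\perp D_n P_n = \iota_n(\id_{\HS_n}-J_n^*J_n)R_n\iota_n^*$, which places an isometry on the left and a co-isometry on the right so that \Lem{iso-coiso} gives the norm equality $\norm{P_\infty^\perp D_n P_n}=\norm{(\id_{\HS_n}-J_n^*J_n)R_n}$ directly, with no square root ever appearing (and similarly $P_\infty D_n P_n^\perp=-\iota_\infty R_\infty(\id_{\HS_\infty}-J_nJ_n^*)\iota_\infty^*$). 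You instead keep the square-root formulas \eqref{eq:norm.summand2}--\eqref{eq:norm.summand3} from \Lem{weid-from-que} and remove the root by showing that the commutation forces $\id_{\HS_n}-J_n^*J_n=\iota_n^*P_\infty^\perp\iota_n$ to be idempotent, hence an orthogonal projection $Q_n$, and then invoking $\norm{R_n^*Q_n^*Q_nR_n}=\normsqr{Q_nR_n}$. Your idempotency computation is in substance the implication from commuting projections to projection-valued defect operators recorded later in \Thm{j.proj.com} (which the paper proves by a different route via \Thm{proj.comm.jn.iso}), and your observation that a projection defect operator kills the square root is exactly the content of the unnumbered remark on defect operators following \Thm{main2a}. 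What your route buys is a structural explanation of \emph{why} the commutation hypothesis restores the convergence speed, tying the theorem to the defect-operator picture of \Sec{gen.case}; what the paper's route buys is brevity, since the refactorisation needs only \Lem{iso-coiso} and never passes through the $C^*$-identity. Both arguments yield the constant $3$, and both use \eqref{eq:que2} and \eqref{eq:que3} exactly where you do.
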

\begin{proof}
  We again use the decomposition of $D_n$ in three terms as
  in~\eqref{eq:dn_in_3teile}.  The first term causes no problems
  (see~\eqref{eq:norm.summand1} and~\eqref{eq:term1} and can be
  estimated by $\delta_n$.

  For the second and third summand in~\eqref{eq:dn_in_3teile} we need
  the assumption $P_nP_\infty =P_\infty P_n$.  Here, we have
  \begin{align*}
    P_\infty^\perp D_n P_n
    & = (\id_\HSgen - P_\infty) \iota_n R_n \iota_n^*
      = (\id_\HSgen - P_\infty P_n)\iota_n R_n \iota_n^*\\
    &= (\id_\HSgen - P_n P_\infty)\iota_n R_n \iota_n^*
    = \iota_n(\id_{\HS_n}-\iota_n^* \iota_\infty \iota_\infty^* \iota_n) R_n \iota_n^*
    = \iota_n(\id_{\HS_n}-J_n^*J_n) R_n \iota_n^*
  \end{align*}
  using $\iota_n=P_n\iota_n$ for the second equality and where we used
  that $P_n$ and $P_\infty$ commute for the third equality.  In
  particular, we have again an isometry on the left and a co-isometry
  on the right, hence the operator norm equality
  \begin{equation*}
    \norm{P_\infty^\perp D_n P_n}
    = \norm{(\id_{\HS_n}-J_n^* J_n)R_n}
  \end{equation*}
 (see \Lem{iso-coiso}).  Similarly, we have
 \begin{equation*}
   P_\infty D_n P_n^\perp
   = -\iota_\infty R_\infty P_n^\perp
   =-\iota_\infty R_\infty(\id_{\HS_\infty}-J_n J_n^*) \iota_\infty^*
 \end{equation*}
 and
 \begin{equation*}
    \norm{P_\infty D_n P_n^\perp}
    = \norm{R_\infty(\id_{\HS_\infty}-J_n J_n^*)}
    = \norm{(\id_{\HS_\infty}-J_n J_n^*)R_\infty^*}.
  \end{equation*}
  In particular we can estimate $\norm {D_n}$ by the three terms above
  and hence have $\norm{D_n} \le 3\delta_n$.
\end{proof}

We are now giving equivalent characterisations for the commuting
condition $P_nP_\infty=P_\infty P_n$:
\begin{theorem}[equivalent characterisation of partial isometries]
  \label{thm:proj.comm.jn.iso}
  Assume that $(\map{J_n}{\HS_n}{\HS_\infty})_{n \in \N}$ is a
  sequence of contractions, then the following assertions are
  equivalent:
  \begin{enumerate}
  \item
    \label{j.proj.comm.a}
    $J_n$ is a partial isometry.

  \item
    \label{j.proj.comm.b}
    For one parent space (for all parent spaces) with corresponding
    isometries $(\map{\iota_n}{\HS_n}\HSgen)_{n \in \Nbar}$
    factorising $(J_n)_n$ we have
    \begin{equation*}
      \iota_n(\HS_n)\cap \iota_\infty(\HS_\infty)
      = \iota_n(\HS_n'),
    \end{equation*}
    where $\HS_n'=(\ker J_n)^\perp=J_n^*(\HS_\infty)$.
  \item
    \label{j.proj.comm.c}
    For one parent space (for all parent spaces) with corresponding
    isometries $(\map{\iota_n}{\HS_n}\HSgen)_{n \in \Nbar}$
    factorising $(J_n)_n$ we have
    \begin{equation*}
      \iota_n(\HS_n)\cap \iota_\infty(\HS_\infty)
      = \iota_\infty(\HS_\infty'),
    \end{equation*}
    where $\HS_\infty'=(\ker J_n^*)^\perp=J_n(\HS_n)$.
  \item
    \label{j.proj.comm.d}
    For one parent space (for all parent spaces) with corresponding
    isometries factorising $(J_n)_n$ and with orthogonal projections
    $P_n$ and $P_\infty$ we have $P_n P_\infty=P_\infty P_n$.
  \end{enumerate}
\end{theorem}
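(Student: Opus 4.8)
The plan is to fix one arbitrary parent space with corresponding isometries $(\map{\iota_n}{\HS_n}\HSgen)_{n\in\Nbar}$ factorising $(J_n)_n$, to abbreviate the closed subspaces $V:=\ran P_n=\iota_n(\HS_n)$ and $W:=\ran P_\infty=\iota_\infty(\HS_\infty)$, and to prove all equivalences relative to this fixed space with \eqref{j.proj.comm.d} as the central hub. Since \eqref{j.proj.comm.a} is a property of $J_n$ that makes no reference to the parent space, establishing \eqref{j.proj.comm.a}$\iff$\eqref{j.proj.comm.b}$\iff$\eqref{j.proj.comm.c}$\iff$\eqref{j.proj.comm.d} for a single parent space automatically yields the ``for one (for all)'' formulation: each geometric condition, evaluated in any parent space, is equivalent to the intrinsic condition \eqref{j.proj.comm.a}. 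The computations I would rely on are $J_n^*J_n=\iota_n^* P_\infty \iota_n$ (from $J_n=\iota_\infty^*\iota_n$ and~\eqref{eq:isom.iota}) together with the identity $\iota_n(\ker J_n)=V\cap W^\perp$, which holds because $\ker J_n=\{f_n : \iota_n f_n \in \ker \iota_\infty^*=W^\perp\}$ and $\iota_n$ is an isometric isomorphism onto $V$. Transporting orthogonal complements through $\iota_n$ then gives $\iota_n(\HS_n')=V\ominus(V\cap W^\perp)$, and symmetrically $\iota_\infty(\HS_\infty')=W\ominus(W\cap V^\perp)$.

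For \eqref{j.proj.comm.a}$\iff$\eqref{j.proj.comm.d} I would conjugate $J_n^*J_n$ by $\iota_n$: since $\iota_n^*\iota_n=\id_{\HS_n}$, the self-adjoint operator $J_n^*J_n$ is an orthogonal projection if and only if $\iota_n(J_n^*J_n)\iota_n^*=P_nP_\infty P_n$ is one. By \Lemenum{part.iso}{part.iso.c}, $J_n$ is a partial isometry exactly when $J_n^*J_n$ is a projection, so \eqref{j.proj.comm.a} is equivalent to ``$P_nP_\infty P_n$ is an orthogonal projection''. One direction is immediate: if $P_nP_\infty=P_\infty P_n$ then $P_nP_\infty P_n=P_nP_\infty$ is a projection. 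The converse is the only genuinely delicate point. Here I would observe that $(P_\infty P_n)^*(P_\infty P_n)=P_nP_\infty P_n$, so by \Lemenum{part.iso}{part.iso.c} the operator $P_\infty P_n$ is a partial isometry; writing $P_\infty P_n=(P_\infty P_n)(P_\infty P_n)^*(P_\infty P_n)$ and using $P_n^2=P_n$, $P_\infty^2=P_\infty$ collapses the right-hand side to $(P_\infty P_n)^2$, so $P_\infty P_n$ is idempotent. An idempotent of operator norm at most $1$ is automatically an orthogonal projection, hence self-adjoint, so $P_\infty P_n=(P_\infty P_n)^*=P_nP_\infty$, which is \eqref{j.proj.comm.d}.

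It remains to match \eqref{j.proj.comm.b} and \eqref{j.proj.comm.c} with the subspace splitting underlying \eqref{j.proj.comm.d}. Using $\iota_n(\HS_n')=V\ominus(V\cap W^\perp)$, condition \eqref{j.proj.comm.b} reads $V\cap W=V\ominus(V\cap W^\perp)$. The inclusion $V\cap W\subseteq V\ominus(V\cap W^\perp)$ always holds, since a vector of $W$ is orthogonal to every vector of $W^\perp$; hence \eqref{j.proj.comm.b} is equivalent to the orthogonal splitting $V=(V\cap W)\oplus(V\cap W^\perp)$. I would then verify that this splitting is in turn equivalent to \eqref{j.proj.comm.d}: if $P_n$ and $P_\infty$ commute, then $P_nP_\infty$ and $P_n(\id_\HSgen-P_\infty)$ are the mutually orthogonal projections onto $V\cap W$ and $V\cap W^\perp$ and sum to $P_n$; conversely, given the splitting one checks $P_nP_\infty=P_\infty P_n$ by testing separately on $V$ and on $V^\perp$, using that $P_\infty$ sends a vector of $V$ to its $V\cap W$-component. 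The equivalence \eqref{j.proj.comm.c}$\iff$\eqref{j.proj.comm.d} follows by the same argument with the roles of $n$ and $\infty$ (equivalently $V$ and $W$, and $J_n$ and $J_n^*$) interchanged, using $\iota_\infty(\HS_\infty')=W\ominus(W\cap V^\perp)$. Chaining these equivalences through the hub \eqref{j.proj.comm.d} completes the proof, and the parent-space independence of \eqref{j.proj.comm.a} justifies the parenthetical quantifiers.
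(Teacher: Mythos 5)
Your proof is correct, but it routes the equivalences differently from the paper. The paper proves the cycle \eqref{j.proj.comm.a}$\implies$\eqref{j.proj.comm.b}$\implies$\eqref{j.proj.comm.d}$\implies$\eqref{j.proj.comm.a} (plus \eqref{j.proj.comm.a}$\implies$\eqref{j.proj.comm.c}$\implies$\eqref{j.proj.comm.d}): it verifies both inclusions of \eqref{j.proj.comm.b} element-wise via the final-space characterisation \Lemenum{part.iso}{part.iso.e}, passes from \eqref{j.proj.comm.b} to \eqref{j.proj.comm.d} by exhibiting orthogonal decompositions and citing Weidmann's criterion for commuting projections, and closes the loop with the one-line computation $J_nJ_n^*J_n=\iota_\infty^*P_nP_\infty\iota_n=\iota_\infty^*P_\infty P_n\iota_n=J_n$. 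You instead use \eqref{j.proj.comm.d} as a hub and prove \eqref{j.proj.comm.a}$\iff$\eqref{j.proj.comm.d} directly by conjugation: $J_n^*J_n$ is a projection iff $P_nP_\infty P_n=(P_\infty P_n)^*(P_\infty P_n)$ is, whence $P_\infty P_n$ is a partial isometry by \Lemenum{part.iso}{part.iso.c}, hence idempotent, hence (being a contraction) an orthogonal projection and therefore self-adjoint --- this is the genuinely new ingredient, and it is sound; it also makes your argument self-contained where the paper leans on \cite[Aufgabe~2.22, Satz~2.55]{weidmann:00}. Your reduction of \eqref{j.proj.comm.b} to the splitting $V=(V\cap W)\oplus(V\cap W^\perp)$, after observing that the inclusion $V\cap W\subseteq V\ominus(V\cap W^\perp)$ is automatic, is cleaner than the paper's two-sided verification; the only place I would tighten the write-up is the converse "splitting $\implies$ commuting" step, where rather than "testing on $V$ and $V^\perp$" it is more transparent to write $P_n=E+F$ with $E,F$ the projections onto $V\cap W$ and $V\cap W^\perp$ and note $P_\infty E=EP_\infty=E$ and $P_\infty F=FP_\infty=0$, so both products equal $E$. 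What your approach buys is a direct operator-theoretic proof of \eqref{j.proj.comm.a}$\iff$\eqref{j.proj.comm.d} with no external citation; what the paper's buys is the very short $(d)\implies(a)$ computation and a treatment of \eqref{j.proj.comm.b}, \eqref{j.proj.comm.c} that exercises the partial-isometry lemma directly. Both handle the "for one/for all" quantifiers identically, via the parent-space independence of \eqref{j.proj.comm.a}.
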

\begin{proof}
  As \itemref{j.proj.comm.a} is formulated without reference to the
  parent space, the other assertions are true for one parent space
  resp.\ for all parent spaces for $(J_n,\HS_n,\HS_\infty)$.

  \itemref{j.proj.comm.a}$\implies$\itemref{j.proj.comm.b}
  ``$\subseteq$'': Let $f=\iota_n f_n=\iota_\infty f_\infty$.  We have
  to show that we can actually choose $f_n' \in \HS_n'$ such that
  $\iota_n f_n'=\iota_n f_n=f$.  Let
  $f_n'=J_n^*f_\infty \in J_n^*(\HS_\infty)=(\ker J_n)^\perp =
  \HS_n'$, then
  \begin{equation*}
    \iota_n f_n'
    = \iota_n J_n^* f_\infty
    = \iota_n \iota_n^* \iota_\infty f_\infty
    = \iota_n \iota_n^* \iota_n f_n
    = \iota_n f_n
    = f,
  \end{equation*}
  i.e., $f \in \iota_n(\HS_n')$.  ``$\supseteq$'': Let
  $f=\iota_n f_n' \in \iota_n(\HS_n') \subset \iota_n(\HS_n)$.  We
  have to show that $f \in \iota_\infty(\HS_\infty)$.  As the latter
  space is the final space of the isometry $\iota_\infty$,
  we use the characterisation \Lemenum{part.iso}{part.iso.e}: We have
  \begin{equation*}
    \norm[\HS_\infty] {\iota_\infty^*f}
    = \norm[\HS_\infty]{\iota_\infty^* \iota_n f_n'}
    = \norm[\HS_\infty]{J_nf_n'}
    = \norm[\HS_n]{f_n'}
    =\norm[\HSgen] {\iota_nf_n'}
    =\norm[\HSgen] f,
  \end{equation*}
  where we used for the third equation that $f_n'$ is in the initial
  space $\HS_n'=(\ker J_n)^\perp$ of the partial isometry $J_n$.  In
  particular, we have shown that $f \in \iota_\infty(\HS_\infty)$.

  \itemref{j.proj.comm.a}$\implies$\itemref{j.proj.comm.c} The proof is
  literally the same as the one
  for~\itemref{j.proj.comm.a}$\implies$\itemref{j.proj.comm.b}, just
  interchange $(\cdot)_\infty$ and $J_n^*$ with $(\cdot)_n$ and $J_n$,
  respectively.

   \itemref{j.proj.comm.b}$\implies$\itemref{j.proj.comm.d}: We first
  observe that $\iota_n(\HS_n'') \perp \iota_\infty(\HS_\infty)$,
  where $\HS_n''=(\HS_n')^\perp=\ker J_n$: Let $f=\iota_n f_n''$ with
  $J_n f_n''=0$ and
  $g = \iota_\infty g_\infty \in \iota_\infty(\HS_\infty)$.  Then
  \begin{equation*}
    \iprod[\HSgen] f g
    = \iprod[\HSgen]{\iota_n f_n''}{\iota_\infty g_\infty}
    = \iprod[\HS_\infty]{\iota_\infty^*\iota_n f_n''}{g_\infty}
    = \iprod[\HS_\infty]{J_nf_n''}{g_\infty}
    =0.
  \end{equation*}
  Now we have
  \begin{align*}
    \iota_n(\HS_n)
    = \iota_n(\HS_n') \oplus \iota_n(\HS_n'')
    &= \bigl(\iota_n(\HS_n) \cap \iota_\infty(\HS_\infty)\bigr)
    \oplus \iota_n(\HS_n'')\\
    \text{and}\quad
    \iota_\infty(\HS_\infty)
    &= \bigl(\iota_n(\HS_n) \cap \iota_\infty(\HS_\infty)\bigr)
    \oplus \hat \HS^\perp,
  \end{align*}
  where $\hat \HS^\perp$ is defined by the last line.  As
  $\hat \HS^\perp \subset \iota_\infty(\HS_\infty)$ we have
  $\hat \HS^\perp \perp \iota_n(\HS_n'')$.
  From~\cite[Aufgabe~2.22]{weidmann:00} we conclude that
  $P_nP_\infty=P_\infty P_n$.

  \itemref{j.proj.comm.c}$\implies$\itemref{j.proj.comm.d}: Again, the
  proof is literally the same as the one
  for~\itemref{j.proj.comm.b}$\implies$\itemref{j.proj.comm.d}, just
  interchange $(\cdot)_\infty$ and $J_n^*$ with $(\cdot)_n$ and $J_n$,
  respectively.

  \itemref{j.proj.comm.d}$\implies$\itemref{j.proj.comm.a}: We have
  \begin{equation*}
    J_n J_n^* J_n
    = \iota_\infty^* \iota_n \iota_n^* \iota_\infty \iota_\infty^* \iota_n
    = \iota_\infty^* P_n P_\infty \iota_n
    = \iota_\infty^* P_\infty P_n \iota_n
    = \iota_\infty^* \iota_n
    =J_n.
  \end{equation*}
  By \Lem{part.iso}, $J_n$ is a partial isometry.
\end{proof}

One consequence of \Thm{proj.comm.jn.iso} is that the commuting
property of the projections is an invariant, i.e., only depending on
the quasi-unitary setting given by
$(\map{J_n}{\HS_n}{\HS_\infty})_{n \in \N}$:
\begin{corollary}[projection commuting is invariant]
  \label{cor:proj.comm.all}
  If there is a parent space with isometries factorising $(J_n)_n$
  such that $P_n P_\infty=P_\infty P_n$ then this is true for
  \emph{all} parent spaces.
\end{corollary}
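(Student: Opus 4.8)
The plan is to read this off directly from the equivalence of assertions~(a) and~(d) in \Thm{proj.comm.jn.iso}. The decisive observation is that the statement ``$J_n$ is a partial isometry'' is a property of the operators $J_n$ \emph{alone}: it makes no reference to a parent space, to the isometries $\iota_n$, or to the projections $P_n$ and $P_\infty$. The commuting condition $P_nP_\infty = P_\infty P_n$, on the other hand, is a priori tied to a particular parent space. Since \Thm{proj.comm.jn.iso} identifies the latter condition with the former, intrinsic, one, the commuting condition is forced to be independent of the chosen parent space.

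Concretely, I would fix $n \in \N$ and argue as follows. By hypothesis there is a parent space with isometries factorising $(J_n)_n$ whose associated projections satisfy $P_nP_\infty = P_\infty P_n$, so assertion~(d) of \Thm{proj.comm.jn.iso} holds for this one parent space. Invoking the implication (d)$\implies$(a) of that theorem gives that $J_n$ is a partial isometry. This conclusion no longer involves any parent space, so I may feed it back into the reverse implication (a)$\implies$(d), which \Thm{proj.comm.jn.iso} establishes in the ``for all parent spaces'' form. Hence $P_nP_\infty = P_\infty P_n$ holds in \emph{every} parent space with isometries factorising $(J_n)_n$. Since $n \in \N$ was arbitrary, the corollary follows.

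I expect no genuine obstacle here: all of the substantive content --- in particular the geometric identification of $\iota_n(\HS_n) \cap \iota_\infty(\HS_\infty)$ and the appeal to \cite[Aufgabe~2.22]{weidmann:00} to pass from orthogonality of the complementary subspaces to commutation of the projections --- has already been carried out in the proof of \Thm{proj.comm.jn.iso}. What remains is the purely logical step that an equivalence of the shape ``(parent-space statement) $\iff$ (parent-space-free statement)'' automatically turns the parent-space statement into an invariant of the data $(\map{J_n}{\HS_n}{\HS_\infty})_{n \in \N}$. Accordingly, I would write the proof as a single short paragraph built from the two implications above.
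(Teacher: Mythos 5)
Your argument is exactly the paper's: the corollary is stated as an immediate consequence of \Thm{proj.comm.jn.iso}, obtained by passing from (d) for one parent space to the parent-space-free assertion (a) and back to (d) for all parent spaces, which is precisely the two-step deduction you describe. The proposal is correct and matches the paper's (implicit) proof.
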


We immediately conclude from \Thms{main2b}{proj.comm.jn.iso}:
\begin{corollary}[$J_n$ is a partial isometry]
  \label{cor:proj.comm.jn.p-iso}
  Assume that $\D_n \gnrc \D_\infty$ QUE-converges with identification
  operators $(J_n)_n$ and convergence speed $(\delta_n)_n$ and that a
  parent space with isometries factorising $(J_n)_n$ exists.

  If $J_n$ are partial isometries for all $n \in \N$ then
  $\D_n \gnrcW \D_\infty$ with speed $(3\delta_n)_n$.
\end{corollary}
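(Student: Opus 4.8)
The plan is to obtain the statement as a direct corollary of \Thms{main2b}{proj.comm.jn.iso}, which between them already contain all of the real content; the only task is to check that the hypotheses of both theorems are satisfied under the present assumptions and then chain the conclusions.

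First I would note that, being partial isometries, the operators $(\map{J_n}{\HS_n}{\HS_\infty})_{n \in \N}$ are in particular \emph{contractions}: a partial isometry restricts to an isometry on its initial space and vanishes on the orthogonal complement, so $\norm{J_n} \le 1$ (this is also forced by the mere existence of a factorising parent space, cf.\ \Lem{part.iso} and the definition~\eqref{eq:def.part.iso}). This is exactly the standing hypothesis of \Thm{proj.comm.jn.iso}. Since, by assumption, a parent space with isometries $(\map{\iota_n}{\HS_n}\HSgen)_{n \in \Nbar}$ factorising $(J_n)_n$ exists, the implication \eqref{j.proj.comm.a}$\implies$\eqref{j.proj.comm.d} of that theorem applies and yields that the associated orthogonal projections $P_n=\iota_n\iota_n^*$ and $P_\infty=\iota_\infty\iota_\infty^*$ commute, i.e.,
\begin{equation*}
  P_n P_\infty = P_\infty P_n \qquad\text{for all } n \in \N,
\end{equation*}
so that the commuting condition~\eqref{eq:proj.commute} holds.

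It then remains only to invoke \Thm{main2b}. By hypothesis $\D_n \gnrc \D_\infty$ QUE-converges with speed $(\delta_n)_n$, a parent space factorising $(J_n)_n$ exists, and we have just verified that $P_n$ and $P_\infty$ commute; \Thm{main2b} therefore gives $\D_n \gnrcW \D_\infty$ with convergence speed $(3\delta_n)_n$, which is precisely the claim. I do not anticipate any genuine obstacle, since the substance is already packaged in the two cited theorems; the single point deserving a moment's care is the observation that partial isometries are contractions, without which \Thm{proj.comm.jn.iso} could not be applied verbatim.
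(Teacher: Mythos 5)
Your argument is correct and is exactly the route the paper intends: the corollary is stated there as an immediate consequence of \Thms{main2b}{proj.comm.jn.iso}, obtained by using the implication that partial isometries force $P_nP_\infty=P_\infty P_n$ and then invoking the improved convergence speed of \Thm{main2b}. Your additional remark that partial isometries are contractions, so the standing hypothesis of \Thm{proj.comm.jn.iso} is met, is a correct and worthwhile check.
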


Two special cases often appearing in applications deserve to be
mentioned.  Although the claims follow from the fact that a
(co-)isometry is a partial isometry, we give explicit proofs here (as
they are rather simple):
\begin{corollary}[$J_n$ is an isometry]
  \label{cor:proj.comm.jn.iso}
  Assume that $\D_n \gnrc \D_\infty$ QUE-converges with speed
  $(\delta_n)_n$.  If the identification operators $J_n$ are
  isometries for all $n \in \N$ then $\HSgen=\HS_\infty$ is a parent
  space and $\D_n \gnrcW \D_\infty$
  with speed $(2\delta_n)_n$.
\end{corollary}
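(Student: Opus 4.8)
The plan is to use the limit space itself as the parent space. Concretely I would set $\HSgen := \HS_\infty$, take $\iota_\infty := \id_{\HS_\infty}$, and put $\iota_n := J_n$ for $n \in \N$. Each $J_n$ is an isometry by hypothesis and $\id_{\HS_\infty}$ is trivially one, so $\HS_\infty$ is a parent space in the sense of \Def{dreamworld}; moreover $\iota_\infty^* \iota_n = \id_{\HS_\infty} J_n = J_n$, so these isometries factorise $(J_n)_n$. With this choice the associated projections are $P_\infty = \iota_\infty \iota_\infty^* = \id_{\HS_\infty}$ and $P_n = \iota_n \iota_n^* = J_n J_n^*$, and the resolvent difference is $D_n = \iota_n R_n \iota_n^* - \iota_\infty R_\infty \iota_\infty^* = J_n R_n J_n^* - R_\infty$ acting on $\HS_\infty$.

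The key structural observation is that $P_\infty = \id_{\HS_\infty}$ forces $P_\infty^\perp = 0$, so in the three-term decomposition~\eqref{eq:dn_in_3teile} the middle summand $P_\infty^\perp D_n P_n$ --- the one that carried the lossy square root in \Lem{weid-from-que} --- vanishes identically. This is exactly what lets us improve on the generic bound $3\delta_n$ of \Thm{main2b}. For the surviving first summand, \eqref{eq:norm.summand1} of \Lem{weid-from-que} gives $\norm[\Lin{\HS_\infty}]{P_\infty D_n P_n} = \norm[\Lin{\HS_n,\HS_\infty}]{J_n R_n - R_\infty J_n} \le \delta_n$, the last inequality being~\eqref{eq:que3}.

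For the third summand I would use that an isometry is in particular a partial isometry, so $J_n J_n^*$ is an orthogonal projection and hence $Q := \id_{\HS_\infty} - J_n J_n^*$ is idempotent and self-adjoint. Then the square root in~\eqref{eq:norm.summand3} collapses: since $R_\infty^* Q R_\infty = R_\infty^* Q^2 R_\infty = (Q R_\infty)^*(Q R_\infty)$, we get $\norm[\Lin{\HS_\infty}]{P_\infty D_n P_n^\perp} = (\norm[\Lin{\HS_\infty}]{(Q R_\infty)^* (Q R_\infty)})^{1/2} = \norm[\Lin{\HS_\infty}]{Q R_\infty} \le \delta_n$ by the second estimate in~\eqref{eq:que2}. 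Adding the two contributions yields $\norm[\Lin{\HS_\infty}]{D_n} \le 2\delta_n$, i.e.\ $\D_n \gnrcW \D_\infty$ with speed $(2\delta_n)_n$.

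There is no real obstacle here --- the statement is essentially a specialisation of \Thm{main2b} --- so the only thing to track carefully is why the factor improves from $3$ to $2$. Two of the three summands simplify for free: the isometry condition gives $\id_{\HS_n} - J_n^* J_n = 0$ (which already trivialises the first estimate in~\eqref{eq:que2}), while the choice $\iota_\infty = \id_{\HS_\infty}$ kills $P_\infty^\perp$ and makes $Q$ a projection, removing the square root from the third term. These two simplifications are precisely the two places where \Thm{main2a} lost speed.
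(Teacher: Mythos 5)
Your proof is correct, and it reaches the same bound $2\delta_n$ by the same basic device (parent space $\HSgen=\HS_\infty$, $\iota_n=J_n$, $\iota_\infty=\id_{\HS_\infty}$), but the route through the two surviving terms differs from the paper's. The paper simply telescopes $D_n=J_nR_nJ_n^*-R_\infty$ through the intermediate term $J_nJ_n^*R_\infty$, writing $D_n=J_n(R_nJ_n^*-J_n^*R_\infty)+(J_nJ_n^*-\id_{\HS_\infty})R_\infty$ and estimating the first piece by the adjoint intertwining estimate~\eqref{eq:que32} and the second directly by~\eqref{eq:que2}; no square root ever appears. You instead invoke the general decomposition~\eqref{eq:dn_in_3teile} and \Lem{weid-from-que}, observe that $P_\infty^\perp=0$ kills the middle summand, and then must do extra work on the third summand: formula~\eqref{eq:norm.summand3} produces a square root, which you collapse by noting that $\id_{\HS_\infty}-J_nJ_n^*$ is an orthogonal projection (since an isometry is a partial isometry). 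That step is valid, but it makes your argument depend on the partial-isometry structure in a place where the paper's direct telescoping does not; in exchange, your version uses~\eqref{eq:que3} in its original (non-adjoint) form and makes transparent exactly which of the three generic summands of \Thm{main2a} are responsible for the improvement from $3\delta_n$ to $2\delta_n$. Both arguments are complete; effectively they are mirror-image telescopings (yours inserts $R_\infty J_nJ_n^*$, the paper's inserts $J_nJ_n^*R_\infty$).
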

\begin{remark*}
  Note that if $J_n$ is an isometry, then $\D_n \gnrcW \D_\infty$ just
  means that
  \begin{equation*}
    \norm[\Lin \HSgen]{J_n R_n J_n^* - R_\infty} \to 0.
  \end{equation*}
\end{remark*}
\begin{proof}
  If $J_n$ is an isometry, then clearly $\HSgen=\HS_\infty$ is a
  parent space with isometries
  $(\map{\iota_n}{\HS_n}{\HSgen})_{n \in \N}$ and $\iota_n=J_n$.
  Moreover, $\iota_\infty=\id_{\HS_\infty}$ is unitary and we can add
  $\iota_\infty$ on the right and $\iota_\infty^*$ on the left hand
  side.  In particular, we have
  \begin{align*}
    \norm[\Lin{\HSgen}]{D_n}
    &= \norm[\Lin{\HS_\infty}]
      {\iota_\infty^*(\iota_n R_n \iota_n^*
      - \iota_\infty R_\infty \iota_\infty^*)\iota_\infty}\\
    &= \norm[\Lin{\HS_\infty}]{J_nR_nJ_n^* -R_\infty}\\
    &\leq \norm[\Lin{\HS_\infty}]{J_n(R_nJ_n^* - J_n^*R_\infty)}
      +  \norm[\Lin{\HS_\infty}]{(J_nJ_n^*-\id_{\HS_\infty})R_\infty}
      \leq 2\delta_n.
      \qedhere
  \end{align*}
\end{proof}

\begin{corollary}[$J_n$ is a co-isometry]
  \label{cor:proj.comm.jn.co-iso}
  Assume that $\D_n \gnrc \D_\infty$ QUE-converges with speed
  $(\delta_n)_n$ and that a parent space exists.  If $J_n$ are
  co-isometries for all $n \in \N$ then $\D_n \gnrcW \D_\infty$ with
  speed $(2\delta_n)_n$.  Moreover,
  $\iota_\infty(\HS_\infty) \subset \iota_n(\HS_n)$ for all
  $n \in \N$.
\end{corollary}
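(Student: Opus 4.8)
The plan is to mirror the explicit argument used for \Cor{proj.comm.jn.iso}, exploiting that a co-isometry $J_n$ satisfies $J_nJ_n^*=\id_{\HS_\infty}$; this relation both forces a range inclusion and makes one of the three summands of $D_n$ in~\eqref{eq:dn_in_3teile} vanish, which is precisely what lets us do better than the generic speed of \Thm{main2b}.

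First I would prove the geometric statement $\iota_\infty(\HS_\infty)\subset\iota_n(\HS_n)$. Writing the co-isometry relation in the parent space via $J_n=\iota_\infty^*\iota_n$ and \Lem{part.iso}, we have $J_nJ_n^*=\iota_\infty^*P_n\iota_\infty=\id_{\HS_\infty}$. For $f_\infty\in\HS_\infty$ and $g:=\iota_\infty f_\infty$ this gives
\begin{equation*}
  \normsqr[\HSgen]{P_ng}
  =\iprod[\HSgen]{P_ng}{g}
  =\iprod[\HS_\infty]{\iota_\infty^*P_n\iota_\infty f_\infty}{f_\infty}
  =\normsqr[\HS_\infty]{f_\infty}
  =\normsqr[\HSgen]{g},
\end{equation*}
so the orthogonal projection $P_n$ preserves the norm of $g$, forcing $g\in\ran P_n=\iota_n(\HS_n)$. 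This is exactly the ``moreover'' assertion.

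Next I would convert the inclusion into projection identities. It gives $P_n\iota_\infty=\iota_\infty$, hence by taking adjoints $\iota_\infty^*P_n=\iota_\infty^*$ and $\iota_\infty^*P_n^\perp=0$, and therefore $P_nP_\infty=P_\infty P_n=P_\infty$, i.e.~\eqref{eq:proj.commute} holds. With this in hand I would estimate $\norm[\Lin\HSgen]{D_n}$ through the splitting~\eqref{eq:dn_in_3teile}. The first summand is controlled by~\eqref{eq:norm.summand1} and~\eqref{eq:que3}, giving $\norm[\Lin\HSgen]{P_\infty D_nP_n}\le\delta_n$. The third summand $P_\infty D_nP_n^\perp=-\iota_\infty R_\infty\iota_\infty^*P_n^\perp$ (using \Lem{formelsammlung}) vanishes because $\iota_\infty^*P_n^\perp=0$, equivalently because $\id_{\HS_\infty}-J_nJ_n^*=0$ in~\eqref{eq:norm.summand3}. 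For the middle summand I would repeat the commuting computation from the proof of \Thm{main2b} to obtain $P_\infty^\perp D_nP_n=\iota_n(\id_{\HS_n}-J_n^*J_n)R_n\iota_n^*$, which carries an isometry on the left and a co-isometry on the right, so \Lem{iso-coiso} and~\eqref{eq:que2} yield $\norm[\Lin\HSgen]{P_\infty^\perp D_nP_n}=\norm[\Lin{\HS_n}]{(\id_{\HS_n}-J_n^*J_n)R_n}\le\delta_n$. Adding the three bounds gives $\norm[\Lin\HSgen]{D_n}\le2\delta_n$, i.e.\ $\D_n\gnrcW\D_\infty$ with speed $(2\delta_n)_n$.

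The main obstacle I anticipate is the middle summand: without extra structure,~\eqref{eq:norm.summand2} only delivers the square-root bound of \Thm{main2a}, which would cost a factor $\delta_n^{1/2}$. The clean linear bound $\delta_n$ is available only because of the commuting relation, so the range inclusion established in the first step is the crucial ingredient; the vanishing of the third summand is the additional input that sharpens the generic speed $3\delta_n$ of \Thm{main2b} to $2\delta_n$.
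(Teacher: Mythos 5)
Your proof is correct. It rests on the same two pillars as the paper's: the range inclusion $\iota_\infty(\HS_\infty)\subset\iota_n(\HS_n)$ forced by $J_nJ_n^*=\id_{\HS_\infty}$, and the observation that only the two error quantities $\norm{(\id_{\HS_n}-J_n^*J_n)R_n}$ and $\norm{J_nR_n-R_\infty J_n}$ survive, each bounded by $\delta_n$. The packaging differs slightly: the paper obtains the inclusion by citing \Thmenum{proj.comm.jn.iso}{j.proj.comm.c} (with $\HS_\infty'=\HS_\infty$), then uses it to write $\norm[\Lin\HSgen]{D_n}=\norm[\Lin{\HS_n}]{\iota_n^*D_n\iota_n}=\norm[\Lin{\HS_n}]{R_n-J_n^*R_\infty J_n}$ and splits this single operator into the two terms $(\id-J_n^*J_n)R_n+J_n^*(J_nR_n-R_\infty J_n)$; you instead prove the inclusion by a direct norm computation with $P_n$ and keep the three-summand decomposition~\eqref{eq:dn_in_3teile} in the parent space, noting that $P_\infty D_nP_n^\perp=0$. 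Both routes are equally rigorous and yield the same constant $2\delta_n$; your version is marginally more self-contained (no appeal to \Thm{proj.comm.jn.iso}), while the paper's conjugation trick is shorter once that theorem is in hand.
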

\begin{proof}
  If $J_n^*$ is an isometry then $\HS_\infty'=\HS_\infty$ and
  $\iota_\infty(\HS_\infty) \subset \iota_n(\HS_n)$ by
  \Thmenum{proj.comm.jn.iso}{j.proj.comm.c}, and therefore $\iota_n$
  can be added on the left and $\iota_n^*$ on the right hand side.  In
  particular, we have
  \begin{align*}
    \norm[\Lin{\HSgen}]{D_n}
    &= \norm[\Lin{\HS_n}]
      {\iota_n^*(\iota_n R_n \iota_n^*
      - \iota_\infty R_\infty \iota_\infty^*)\iota_n}\\
    &= \norm[\Lin{\HS_\infty}]{R_n -J_n^*R_\infty J_n}\\
    &\leq \norm[\Lin{\HS_n}]{(\id-J_n^*J_n)R_n}
      +  \norm[\Lin{\HS_n}]{J_n^*(J_nR_n - R_\infty J_n)}
    \leq 2\delta_n.
      \qedhere
  \end{align*}
\end{proof}

\section{From QUE-convergence to Weidmann's convergence: the general case}
\label{sec:gen.case}
%
\subsection{Defect operators and existence of a parent space}
\label{ssec:concrete.iso}
The main result in this subsection is to prove the existence of a
parent space $\HSgen$ with isometries
$(\map{\iota_n}{\HS_n}\HSgen)_{n \in \Nbar}$ factorising a given
sequence of identification operators
$(\map{J_n}{\HS_n}{\HS_\infty})_{n \in \N}$, see \Def{dreamworld}.  As
$J_n=\iota_\infty^* \iota_n$, we necessarily have
\begin{equation}
  \label{eq:jn.le1}
  \norm[\Lin{\HS_n,\HS_\infty}]{J_n} \leq 1
  \qquad\text{for all $n \in\N$,}
\end{equation}
hence we assume~\eqref{eq:jn.le1} throughout this subsection.

Our main result in this subsection is as follows:
\begin{theorem}[a parent space exists]
  \label{thm:dreamworld.exists}
  If $\map {J_n} {\HS_n} {\HS_\infty}$ are contractions
  ($\norm{J_n} \le 1$) for all $n \in \N$, then there is a parent
  space $\HSgen$ and isometries
  $(\map{\iota_n}{\HS_n}\HSgen)_{n \in \Nbar}$ factorising the
  identification operators $(J_n)_{n \in \N}$, i.e.,
  $J_n=\iota_\infty^*\iota_n$.
\end{theorem}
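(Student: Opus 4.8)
The plan is to realise the parent space as a Hilbert-space direct sum and to define the isometries explicitly by means of the \emph{defect operators}
\[
  W_n := (\id_{\HS_n} - J_n^* J_n)^{1/2},
\]
which is the device already flagged in the remark on defect operators above. These are well defined precisely because of the contraction hypothesis: $\norm{J_n} \le 1$ gives $\iprod[\HS_n]{f_n}{J_n^* J_n f_n} = \normsqr[\HS_\infty]{J_n f_n} \le \normsqr[\HS_n]{f_n}$, so $\id_{\HS_n} - J_n^* J_n$ is a non-negative self-adjoint operator and its positive square root exists by the functional calculus. This is exactly where $\norm{J_n} \le 1$ enters, and it is also forced, since any factorisation through a parent space makes each $J_n$ a contraction.

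First I would take
\[
  \HSgen := \HS_\infty \oplus \bigoplus_{n \in \N} \HS_n
\]
(the Hilbert-space direct sum with square-summable norm) and set $\iota_\infty f_\infty := (f_\infty, 0, 0, \dots)$, which is manifestly an isometry whose adjoint $\iota_\infty^*$ simply reads off the $\HS_\infty$-component. Next, for each $n \in \N$ I would place the ``expected'' image $J_n f_n$ in the $\HS_\infty$-slot and hide the norm defect in the $n$-th auxiliary copy of $\HS_n$, i.e.
\[
  \iota_n f_n := (J_n f_n, 0, \dots, 0, W_n f_n, 0, \dots),
\]
where $W_n f_n$ sits in the $n$-th summand $\HS_n$. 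Each such vector has only two nonzero entries, so it lies in $\HSgen$.

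The two required properties are then routine. For the isometry property I would note
\[
  \normsqr[\HSgen]{\iota_n f_n}
  = \normsqr[\HS_\infty]{J_n f_n} + \normsqr[\HS_n]{W_n f_n},
\]
and since $\normsqr[\HS_n]{W_n f_n} = \iprod[\HS_n]{f_n}{(\id_{\HS_n} - J_n^* J_n) f_n} = \normsqr[\HS_n]{f_n} - \normsqr[\HS_\infty]{J_n f_n}$, the two $J_n$-terms cancel and we obtain $\normsqr[\HSgen]{\iota_n f_n} = \normsqr[\HS_n]{f_n}$, equivalently $\iota_n^* \iota_n = \id_{\HS_n}$. The factorisation is immediate, since $\iota_\infty^*$ extracts the first component and hence $\iota_\infty^* \iota_n f_n = J_n f_n$, i.e.\ $J_n = \iota_\infty^* \iota_n$ for all $n \in \N$.

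There is no deep obstacle here: the entire content lies in the choice of $W_n$, engineered to absorb exactly the discrepancy $\normsqr[\HS_n]{f_n} - \normsqr[\HS_\infty]{J_n f_n}$ and thereby upgrade $J_n$ into an isometry. The only points needing a little care are the positivity of $\id_{\HS_n} - J_n^* J_n$ (handled above) and the use of \emph{separate} auxiliary copies for distinct indices $n$, which keeps the defect parts mutually orthogonal and lets each $\iota_n$ be isometric independently of the others. If one additionally wanted an economical construction, one could replace $\bigoplus_{n \in \N} \HS_n$ by $\bigoplus_{n \in \N} \overline{\ran W_n}$, but for the mere existence asserted here the simpler choice suffices.
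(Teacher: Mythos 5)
Your proposal is correct and follows essentially the same route as the paper: the same parent space $\HS_\infty \oplus \bigoplus_{n \in \N} \HS_n$, the same defect operators $W_n = (\id_{\HS_n}-J_n^*J_n)^{1/2}$, the same isometries $\iota_n f_n = (J_n f_n,0,\dots,0,W_nf_n,0,\dots)$, and the same norm identity $\normsqr[\HS_\infty]{J_nf_n}+\normsqr[\HS_n]{W_nf_n}=\normsqr[\HS_n]{f_n}$ to verify isometry. Your closing remark about restricting to $\clo{\ran W_n}$ matches the paper's later discussion of the minimal parent space.
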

Together with \Thm{main2a} we immediately conclude:
\begin{corollary}[QUE-convergence implies Weidmann's one, the case of
  contractions]
  \label{cor:main2a}
  Let $\D_n$ be a self-adjoint operator in a Hilbert space $\HS_n$ for
  $n \in \Nbar$.  If $\D_n \gnrc \D_\infty$ with identification
  operators $J_n$ fulfilling $\norm{J_n} \le 1$ and with convergence
  speed $(\delta_n)_n$. Then $\D_n \gnrcW \D_\infty$
  Weidmann-converges with convergence speed
  $\wt \delta_n \in \Err(\delta_n^{1/2})$ given
  in~\eqref{eq:conv.speed}.
\end{corollary}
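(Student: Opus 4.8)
The plan is to chain the two results just proved, since the corollary is stated as an immediate consequence of \Thm{dreamworld.exists} and \Thm{main2a}. The hypothesis $\D_n \gnrc \D_\infty$ furnishes identification operators $(\map{J_n}{\HS_n}{\HS_\infty})_{n \in \N}$ satisfying the QUE-estimates \eqref{eq:que1}--\eqref{eq:que3} with speed $(\delta_n)_n$, and by assumption each $J_n$ is a contraction, $\norm{J_n} \le 1$. These are precisely the inputs required by the two preceding theorems.

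First I would invoke \Thm{dreamworld.exists}: because the $J_n$ are contractions, there exist a parent space $\HSgen$ and isometries $(\map{\iota_n}{\HS_n}\HSgen)_{n \in \Nbar}$ factorising the identification operators, i.e.\ $J_n = \iota_\infty^* \iota_n$ for all $n \in \N$. This puts us exactly in the situation assumed in \Thm{main2a}, where one has both QUE-convergence \emph{and} a factorising parent space. Applying \Thm{main2a} then yields $\D_n \gnrcW \D_\infty$ with convergence speed $\wt\delta_n$ as in \eqref{eq:conv.speed}, whose closing estimate already records $\wt\delta_n \in \Err(\delta_n^{1/2})$. This completes the argument.

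Since both ingredients are fully established, there is no genuine obstacle remaining at the level of the corollary itself: the real work is located upstream. The construction of a parent space from the bare QUE-data — carried out in \Thm{dreamworld.exists} via the defect operators $W_n = (\id_{\HS_n} - J_n^* J_n)^{1/2}$ — is what makes the factorisation hypothesis of \Thm{main2a} available, and the norm bookkeeping inside \Thm{main2a} (where the square roots in \eqref{eq:norm.summand2}--\eqref{eq:norm.summand3} force the passage from $\delta_n$ to $\delta_n^{1/2}$) is what fixes the order of $\wt\delta_n$. The corollary merely bundles these into the contraction case.
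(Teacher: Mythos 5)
Your proposal is correct and coincides with the paper's own argument: the corollary is stated immediately after \Thm{dreamworld.exists} with exactly this two-step deduction — the contraction hypothesis yields a factorising parent space via \Thm{dreamworld.exists}, and \Thm{main2a} then gives Weidmann-convergence with the speed $\wt\delta_n \in \Err(\delta_n^{1/2})$ from~\eqref{eq:conv.speed}. Your closing remarks on where the real work lies (the defect-operator construction and the square roots in \eqref{eq:norm.summand2}--\eqref{eq:norm.summand3}) accurately locate the upstream content as well.
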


The proof of \Thm{dreamworld.exists} follows from the statement in
\Lem{concrete.iso} below.  Before, we need some preparations and start
with an important ingredient in the isometry $\iota_n$ already used
in~\cite[Sec.~I.3]{szfbk:10}:
\begin{lemma}
  \label{lem:defect.ops}
  Assume~\eqref{eq:jn.le1}.
  \begin{enumerate}
  \item
    \label{defect.ops.a}
    The so-called \emph{defect operators}
    \begin{equation}
      \label{eq:def.wn}
      W_n:= (\id_{\HS_n}- J_n^* J_n)^{1/2} \qquadtext{and}
      W_{\infty,n}:= (\id_{\HS_\infty}-J_nJ^*_n)^{1/2}
    \end{equation}
    are well-defined and self-adjoint with spectrum in $[0,1]$.
    Moreover, the spectra of $W_n$ and $W_{\infty,n}$ agree including
    multiplicity, except for the value $1$, i.e.,
    \begin{equation*}
      \spec{W_n} \setminus \{1\} = \spec{W_{\infty,n}} \setminus \{1\}.
    \end{equation*}
  \item
    \label{defect.ops.b}
    We have
    \begin{equation}
      \label{eq:intertwining}
      J_nW_n=W_{\infty,n} J_n
      \qquadtext{and}
      W_nJ_n^*=J_n^*W_{\infty,n}.
    \end{equation}

  \item
    \label{defect.ops.c}
    We have
    \begin{subequations}
      \begin{align}
        \label{eq:hilfsglnorm}
        \normsqr[\HS_\infty]{J_n f_n} + \normsqr[\HS_n]{W_n f_n}
        &= \normsqr[\HS_n]{f_n}\qquad\text{and}\\
        \label{eq:hilfsglnorm2}
        \normsqr[\HS_n]{J_n^* f_\infty} + \normsqr[\HS_\infty]{W_{\infty,n} f_\infty}
        &= \normsqr[\HS_\infty]{f_\infty}.
      \end{align}
    \end{subequations}
  \end{enumerate}
\end{lemma}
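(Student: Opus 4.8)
The plan is to dispatch the three parts in order, using throughout the continuous functional calculus for the bounded self-adjoint contractions $J_n^* J_n$ and $J_n J_n^*$, and to leave the one genuinely non-formal point --- the spectral coincidence in \eqref{defect.ops.a} --- for last.

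First I would settle well-definedness in \eqref{defect.ops.a}. Since $\norm{J_n} \le 1$, for every $f_n \in \HS_n$ we have $\iprod[\HS_n]{J_n^* J_n f_n}{f_n} = \normsqr[\HS_\infty]{J_n f_n} \le \normsqr[\HS_n]{f_n}$, so $0 \le J_n^* J_n \le \id_{\HS_n}$ and hence $\id_{\HS_n} - J_n^* J_n$ is a non-negative self-adjoint contraction with spectrum in $[0,1]$. Its square root exists by the functional calculus, so $W_n$ is self-adjoint with $\spec{W_n} \subseteq [0,1]$; the same argument applied to $\id_{\HS_\infty} - J_n J_n^*$ handles $W_{\infty,n}$.

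Next I would prove the intertwining relations \eqref{defect.ops.b}. The elementary identity $J_n (J_n^* J_n) = (J_n J_n^*) J_n$ propagates to powers by induction and hence to every polynomial $p$, giving $J_n\, p(J_n^* J_n) = p(J_n J_n^*)\, J_n$; approximating $t \mapsto \sqrt{1-t}$ uniformly on $[0,1]$ by polynomials and passing to the operator-norm limit yields $J_n W_n = W_{\infty,n} J_n$, and taking adjoints (recalling that $W_n$ and $W_{\infty,n}$ are self-adjoint) gives $W_n J_n^* = J_n^* W_{\infty,n}$. Part \eqref{defect.ops.c} is then a direct inner-product computation: from $W_n^2 = \id_{\HS_n} - J_n^* J_n$,
\begin{equation*}
  \normsqr[\HS_\infty]{J_n f_n} + \normsqr[\HS_n]{W_n f_n}
  = \iprod[\HS_n]{J_n^* J_n f_n}{f_n}
  + \iprod[\HS_n]{(\id_{\HS_n} - J_n^* J_n) f_n}{f_n}
  = \normsqr[\HS_n]{f_n},
\end{equation*}
and symmetrically for \eqref{eq:hilfsglnorm2} using $W_{\infty,n}^2 = \id_{\HS_\infty} - J_n J_n^*$.

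The spectral coincidence in \eqref{defect.ops.a} is where I expect the real work, the main obstacle being that ``multiplicity'' must be controlled for possibly continuous spectrum, not just for eigenvalues. My plan is to reduce it to the standard fact that $J_n^* J_n$ and $J_n J_n^*$ have the same spectrum away from $0$ \emph{including multiplicity}: writing the polar decomposition $J_n = U_n (J_n^* J_n)^{1/2}$, the partial isometry $U_n$ restricts to a unitary from $(\ker J_n)^\perp = \overline{\ran J_n^*}$ onto $\overline{\ran J_n}$ that intertwines the restrictions of $J_n^* J_n$ and $J_n J_n^*$ to these subspaces, so those restrictions are unitarily equivalent and the discrepancy between the two full operators is confined to the eigenvalue $0$. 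Since $W_n = f(J_n^* J_n)$ and $W_{\infty,n} = f(J_n J_n^*)$ for the homeomorphism $f(t) = \sqrt{1-t}$ of $[0,1]$ with $f(0) = 1$, the spectral measures of $W_n$ and $W_{\infty,n}$ are the pushforwards under $f$ of those of $J_n^* J_n$ and $J_n J_n^*$; the value $0$ is carried to $1$, so removing $1$ from both sides yields exactly $\spec{W_n} \setminus \{1\} = \spec{W_{\infty,n}} \setminus \{1\}$ with multiplicity.
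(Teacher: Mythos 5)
Your proposal is correct and follows essentially the same route as the paper: positivity of $\id-J_n^*J_n$ via $\norm{J_n}\le 1$ for well-definedness, polynomial approximation of the square root to get the intertwining relations, a direct inner-product computation for part (c), and reduction of the spectral coincidence to the fact that $\spec{J_n^*J_n}$ and $\spec{J_nJ_n^*}$ agree away from $0$. The only difference is that you prove this last fact in-line via the polar decomposition, whereas the paper simply cites it.
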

\begin{proof}
  \itemref{defect.ops.a}~As $\norm{J_n} \le 1$ we have
  $0 \le \id_{\HS_n} - J_n^*J_n$, hence the square root $W_n$ is
  well-defined.  Moreover, we have
  $\id_{\HS_n} - J_n^*J_n\le \id_{\HS_n}$, hence the spectrum of
  $\id_{\HS_n} - J_n^*J_n$ and therefore of $W_n$ lies in $[0,1]$.
  The claim for $W_{\infty,n}$ follows similarly.  The assertion on
  the spectra of $W_n$ and $W_{\infty,n}$ follows from the fact that
  $\id_{\HS_n}-W_n^2=J_n^*J_n$ and
  $\id_{\HS_\infty}-W_{\infty,n}^2=J_nJ_n^*$ and the fact that the
  spectra of $J_n^*J_n$ and $J_nJ_n^*$ agree except for the value $0$
  (see e.g.~\cite[Sec.~1.2]{post:09c})

  \itemref{defect.ops.b}~For~\eqref{eq:intertwining}, we have
  \begin{equation*}
    J_n(W_n)^2
    =J_n(\id_{\HS_n}-J_n^*J_n)
    = J_n-J_nJ_n^*J_n
    =(\id_{\HS_\infty}-J_nJ_n^*)J_n
    = (W_{\infty,n})^2 J_n,
  \end{equation*}
  and this can be extended to $J_n p((W_n)^2)=p((W_{\infty,n})^2)J_n$
  for any polynomial $p$ defined on $\R$.  As we can approximate
  $t\mapsto\sqrt{t}$ uniformly on $\spec{W_n}$ resp.\
  $\spec{W_{\infty,n}} \subset [0,1]$ by a polynomial, and as uniform
  convergence turns into operator convergence for the operator
  functions, we conclude $J_nW_n=W_{\infty,n} J_n$.  The second
  equality in~\eqref{eq:intertwining} follows by taking adjoints.

  \itemref{defect.ops.c}~We have
  \begin{equation*}
    \normsqr[\HS_\infty]{J_nf_n} + \normsqr[\HS_n]{W_n f_n}
    =\iprod[\HS_n]{f_n}{J_n^*J_nf_n}
    + \iprod[\HS_n]{f_n}{(\id_{\HS_n}-J_n^*J_n)f_n}
    =\normsqr[\HS_n]{f_n}
  \end{equation*}
  and similarly for~\eqref{eq:hilfsglnorm2}.
\end{proof}

Using the definition of the defect operators, we immediately conclude
from \Lem{weid-from-que}:
\begin{corollary}
  \label{cor:defect.ops}
  If $\HS$ is a parent space with corresponding isometries factorising
  $J_n$ and orthogonal projections $P_n$, we have
  \begin{subequations}
    \begin{align}
      \nonumber
      \normsqr[\Lin \HS]{P_\infty^\perp D_n P_n}
      = \normsqr[\Lin{\HS_n}]{W_n R_n}
      &= \norm[\Lin{\HS_n}]{R_n^*(\id_{\HS_n}-J_n^*J_n)R_n}\\
      \label{eq:defect.op.est1}
      &\le \norm[\Lin{\HS_n}] {R_n}
        \norm[\Lin{\HS_n}]{(\id_{\HS_n}-J_n^*J_n)R_n}
        \quad\text{and}\\
      \nonumber
      \normsqr[\Lin \HS]{P_\infty D_n P_n^\perp}
      = \normsqr[\Lin{\HS_\infty}]{W_{\infty,n} R_\infty}
      &=\norm[\Lin{\HS_\infty}]{R_\infty^*(\id_{\HS_\infty}-J_nJ_n^*)R_\infty}\\
      \label{eq:defect.op.est2}
      &\le \norm[\Lin{\HS_\infty}] {R_\infty}
        \norm[\Lin{\HS_\infty}]{(\id_{\HS_\infty}-J_nJ_n^*)R_\infty}.
    \end{align}
  \end{subequations}
\end{corollary}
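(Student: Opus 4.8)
The plan is to read off the two equalities almost directly from \Lem{weid-from-que} and then fold in the definition of the defect operators. The content of~\eqref{eq:norm.summand2} is precisely that $\norm[\Lin \HS]{P_\infty^\perp D_n P_n} = \bigl(\norm[\Lin{\HS_n}]{R_n^*(\id_{\HS_n}-J_n^*J_n)R_n}\bigr)^{1/2}$, so squaring both sides immediately yields the rightmost equality $\normsqr[\Lin \HS]{P_\infty^\perp D_n P_n} = \norm[\Lin{\HS_n}]{R_n^*(\id_{\HS_n}-J_n^*J_n)R_n}$ in~\eqref{eq:defect.op.est1}. Nothing further is needed for that half of the chain.

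The only genuinely new point is to identify this quantity with $\normsqr[\Lin{\HS_n}]{W_n R_n}$. For this I would invoke the $C^*$-identity $\normsqr[\Lin{\HS_n}]{A} = \norm[\Lin{\HS_n}]{A^*A}$ with $A = W_n R_n$. Since $W_n$ is self-adjoint by \Lemenum{defect.ops}{defect.ops.a} and satisfies $W_n^2 = \id_{\HS_n} - J_n^* J_n$ by its very definition~\eqref{eq:def.wn}, we obtain $(W_n R_n)^*(W_n R_n) = R_n^* W_n^2 R_n = R_n^*(\id_{\HS_n}-J_n^*J_n)R_n$, and taking operator norms closes the chain $\normsqr[\Lin{\HS_n}]{W_n R_n} = \norm[\Lin{\HS_n}]{R_n^*(\id_{\HS_n}-J_n^*J_n)R_n}$.

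For the concluding inequality I would simply apply submultiplicativity of the operator norm to the factorisation of the middle expression as $R_n^* \cdot (\id_{\HS_n}-J_n^*J_n)R_n$, together with $\norm[\Lin{\HS_n}]{R_n^*} = \norm[\Lin{\HS_n}]{R_n}$. The second line~\eqref{eq:defect.op.est2} is entirely analogous: one starts from~\eqref{eq:norm.summand3} in place of~\eqref{eq:norm.summand2} and replaces $W_n$, $R_n$, $J_n^*J_n$ by $W_{\infty,n}$, $R_\infty$, $J_nJ_n^*$ throughout, using~\eqref{eq:hilfsglnorm2} where~\eqref{eq:hilfsglnorm} was used. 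I expect no real obstacle here, since the statement is a direct corollary of \Lem{weid-from-que}; the sole ingredient beyond that lemma is the algebraic identity $W_n^2 = \id_{\HS_n}-J_n^*J_n$ combined with the $C^*$-identity, which removes the square root that appeared in~\eqref{eq:norm.summand2}.
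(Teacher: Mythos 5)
Your proposal is correct and follows essentially the same route as the paper: the $C^*$-identity $\normsqr{W_nR_n}=\norm{R_n^*W_n^2R_n}$ combined with $W_n^2=\id_{\HS_n}-J_n^*J_n$ and \Lem{weid-from-que}, plus submultiplicativity for the final inequality. (The passing reference to~\eqref{eq:hilfsglnorm2} is superfluous but harmless.)
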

\begin{proof}
  We have
  \begin{equation*}
    \normsqr[\Lin{\HS_n}]{W_n R_n}
    = \norm[\Lin{\HS_n}]{R_n^* W_n^2 R_n}
    = \norm[\Lin{\HS_n}]{R_n^* (\id_{\HS_n}-J_n^*J_n)R_n},
  \end{equation*}
  hence the first equality follows from~\eqref{eq:norm.summand2}.  A
  similar argument holds for~\eqref{eq:defect.op.est2}.
\end{proof}

Let us now define the parent space:
\begin{definition}[parent space associated with identification
  operators]
  \label{def:parent.hs}
  Let $(\map{J_n}{\HS_n}{\HS_\infty})_{n \in \N}$ be a sequence of
  contractions.  We call
  \begin{subequations}
    \label{eq:parent.hs}
    \begin{align}
      \label{eq:parent.hs.sum}
      \HSgen = \HS_\infty \oplus \bigoplus_{n=1}^\infty \HS_n,
      \qquad f=(f_\infty, f_1, f_2, \dots) \in \HSgen\\
      \label{eq:parent.hs.norm}
      \text{with}\quad
      \normsqr[\HSgen] f
      = \normsqr[\HS_\infty]{f_\infty}
      + \sum_{n=1}^\infty \normsqr[\HS_n]{f_n}
      < \infty
    \end{align}
  \end{subequations}
  the \emph{associated parent space}.  Moreover, we set
  \begin{subequations}
    \begin{equation}
      \label{eq:concrete.iso.n}
      \map {\iota_n}{\HS_n} \HSgen,
      \quad \iota_nf_n = (J_nf_n,0,\dots,0, W_n f_n,0,\dots)
    \end{equation}
    for $n \in \N$, where the entry $W_n f_n$ is at the $n$-th
    component.  Finally, we set
    \begin{equation}
      \label{eq:concrete.iso.infty}
      \map{\iota_\infty}{\HS_\infty}\HSgen,
      \quad \iota_\infty f_\infty=(f_\infty,0,\dots).
    \end{equation}
  \end{subequations}
  We call $(\iota_n)_{n \in \Nbar}$ the \emph{associated isometries}.
\end{definition}
The maps $\iota_n$ are actually isometries factorising $J_n$:
\begin{lemma}[proof of \Thm{dreamworld.exists}]
  \label{lem:concrete.iso}
  \noindent
  \begin{enumerate}
  \item
    \label{concrete.iso.a}
    $\iota_n$ and $\iota_\infty$ are isometries.
  \item
    \label{concrete.iso.b}
    The adjoints $\map{\iota_n^*}\HSgen{\HS_n}$ and
    $\map{\iota_\infty^*}\HSgen{\HS_\infty}$ act as
    \begin{equation*}
      \iota_n^*g=J_n^*g_\infty+ W_n g_n \qquadtext{and}
      \quad \iota_\infty^*g=g_\infty,
    \end{equation*}
    respectively, where $g=(g_\infty,g_1,g_2,\dots) \in \HSgen$.
  \item
    \label{concrete.iso.c}
    We have $J_n =\iota_{\infty}^*\iota_n$ and
    $J_n^*=\iota_n^*\iota_{\infty}$.
  \end{enumerate}
\end{lemma}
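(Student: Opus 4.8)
The plan is to verify all three assertions by a direct computation from the definition of the associated parent space $\HSgen$ and the associated isometries in \Def{parent.hs}, the only substantial input being the norm identities of \Lem{defect.ops}.

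For (a), the claim for $\iota_\infty$ is immediate: since $\iota_\infty f_\infty = (f_\infty,0,\dots)$, we read off $\normsqr[\HSgen]{\iota_\infty f_\infty} = \normsqr[\HS_\infty]{f_\infty}$. For $\iota_n$, the vector $\iota_n f_n = (J_n f_n, 0,\dots,0, W_n f_n, 0,\dots)$ has only two nonzero components, so
\begin{equation*}
  \normsqr[\HSgen]{\iota_n f_n}
  = \normsqr[\HS_\infty]{J_n f_n} + \normsqr[\HS_n]{W_n f_n}
  = \normsqr[\HS_n]{f_n},
\end{equation*}
where the last equality is precisely the Pythagorean identity~\eqref{eq:hilfsglnorm}. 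This identity (and hence the defect operator $W_n$) is exactly what upgrades $\iota_n$ from a mere contraction to an isometry; it also guarantees that $\iota_n f_n$ has finite norm, i.e.\ that $\iota_n$ indeed maps into $\HSgen$.

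For (b), I would read off the adjoints from the defining relation $\iprod[\HSgen]{\iota g}{h} = \iprod{g}{\iota^* h}$. For $\iota_\infty$, pairing $(f_\infty,0,\dots)$ with $g=(g_\infty,g_1,\dots)$ isolates the $\HS_\infty$-slot and gives $\iota_\infty^* g = g_\infty$. For $\iota_n$, the computation
\begin{equation*}
  \iprod[\HSgen]{\iota_n f_n}{g}
  = \iprod[\HS_\infty]{J_n f_n}{g_\infty} + \iprod[\HS_n]{W_n f_n}{g_n}
  = \iprod[\HS_n]{f_n}{J_n^* g_\infty + W_n g_n}
\end{equation*}
yields $\iota_n^* g = J_n^* g_\infty + W_n g_n$; here the only point to note is that $W_n$ is self-adjoint by \Lemenum{defect.ops}{defect.ops.a}, so it may be moved across the inner product unchanged. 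Finally, for (c) I would substitute the formulas from (b): the product $\iota_\infty^* \iota_n f_n$ is the $\HS_\infty$-component of $\iota_n f_n$, namely $J_n f_n$, whence $\iota_\infty^* \iota_n = J_n$; and $\iota_n^* \iota_\infty f_\infty = J_n^* f_\infty + W_n \cdot 0 = J_n^* f_\infty$, whence $\iota_n^* \iota_\infty = J_n^*$.

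There is no genuine obstacle here: the whole lemma is an elementary verification, and it is really the proof of \Thm{dreamworld.exists}, so its role is simply to confirm that the concrete construction of \Def{parent.hs} does what was promised. The only care needed is the bookkeeping of the direct-sum components and the use of self-adjointness of $W_n$; the single conceptual point is that identity~\eqref{eq:hilfsglnorm} is what makes the isometry property of $\iota_n$ work, which is precisely why the defect operators are built into the definition of $\iota_n$ in the first place.
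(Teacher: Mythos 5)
Your proof is correct and follows essentially the same route as the paper: the Pythagorean identity~\eqref{eq:hilfsglnorm} for (a), the inner-product computation with $W_n^*=W_n$ for (b), and reading off the components for (c). The only cosmetic difference is that the paper obtains $J_n^*=\iota_n^*\iota_\infty$ by taking adjoints of $J_n=\iota_\infty^*\iota_n$, whereas you compute it directly from the formula for $\iota_n^*$ — both are immediate.
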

\begin{proof}
  \itemref{concrete.iso.a}~$\iota_n$ is an isometry, as
  \begin{align*}
    \normsqr[\HSgen]{\iota_nf_n}
    =\normsqr[\HS_\infty]{J_nf_n}
      + \sum_{k=1}^\infty \normsqr[\HS_k]{(\iota_nf_n)_k}
    =\normsqr[\HS_\infty]{J_nf_n}
    + \normsqr[\HS_n]{W_n f_n}
    =\normsqr[\HS_n]{f_n}
  \end{align*}
  using~\eqref{eq:hilfsglnorm}.
  Moreover, that $\iota_\infty$ is an isometry is obvious.

  \itemref{concrete.iso.b}~We have
  \begin{align*}
    \iprod[\HSgen]{\iota_nf_n}{g}
    = \iprod[\HS_\infty]{J_nf_n}{g_\infty}
      + \sum_{k=1}^\infty \iprod[\HS_k]{(\iota_n f_n)_k}{g_k}
    &= \iprod[\HS_\infty]{J_nf_n}{g_\infty}
      + \iprod[\HS_n]{W_n f_n}{g_n}\\
    &=\iprod[\HS_n]{f_n}{J_n^*g_\infty+W_n g_n}
  \end{align*}
  for $f_n \in \HS_n$ and $g \in \HSgen$, as $W_n^*=W_n$.  The claim
  $\iota_\infty^* g=g_\infty$ is obvious.

  \itemref{concrete.iso.c}~Obviously, we have
  $\iota_{\infty}^*\iota_nf_n=(\iota_n f_n)_\infty=J_nf_n$ and hence
  the second claim by taking adjoints.
\end{proof}

\begin{remark}
  \label{rem:dn.direct}
  We can express Weidmann's resolvent difference here as
  \begin{align}
    \nonumber
    D_n f
    &= \iota_n R_n \iota_n^*f - \iota_\infty R_\infty \iota_\infty^*f\\
    \label{eq:dn.direct}
    &=\bigl((J_n R_n J_n^* - R_\infty)f_\infty
      + J_n R_n W_n f_n, 0, \dots, 0,
      W_n R_n(J_n^* f_\infty + W_n f_n),0,\dots\bigr)
  \end{align}
  for all $f=(f_\infty,f_1,\dots,0,f_n,0,\dots) \in \HS$.
  Moreover, a direct proof of \Cor{main2a} is possible using
  \Cor{defect.ops} and the QUE-convergence.
\end{remark}

\subsection{Another equivalent characterisation of commuting
  projections}
\label{ssec:eq.char.comm.proj}
For further purposes, let us now calculate the projections
$P_n=\iota_n \iota_n^*$ and $P_\infty=\iota_\infty \iota_\infty^*$ in
this concrete situation: From~\Lemenum{concrete.iso}{concrete.iso.b}
we conclude
\begin{subequations}
  \begin{equation}
    \label{eq:pn}
    P_n f = \bigl(J_n J_n^* f_\infty+J_n W_nf_n,0,\dots,0,
    W_nJ_n^* f_\infty+(\id_{\HS_n}-J_n^*J_n)f_n,0,\dots\bigr)
  \end{equation}
  where the entry with $W_n$ is at the $n$-th position.
  Moreover,
  \begin{equation}
    \label{eq:pinfty}
    P_\infty f = (f_\infty,0,\dots),
  \end{equation}
  where, as usual, $f=(f_\infty,f_1,f_2,\dots) \in \HS$.
\end{subequations}

Let us now calculate the commutator $P_n P_\infty - P_\infty P_n$ needed
as assumption in \Thm{main2b} in the concrete case here:
\begin{lemma}
  \label{lem:proj.commut}
  We have
  \begin{subequations}
    \label{eq:proj.commut}
    \begin{align}
      \label{eq:proj.commut1}
      (P_nP_\infty-P_\infty P_n)f
      &=\bigl(
        -J_n W_n f_n,
        0, \dots, 0,
        W_n J_n^* f_\infty,
        0, \dots
        \bigr)\\
      \label{eq:proj.commut2}
      &=\bigl(
        -W_{\infty,n} J_n f_n,
        0, \dots, 0,
        J_n^* W_{\infty,n} f_\infty,
        0, \dots
        \bigr)
    \end{align}
  \end{subequations}
  for all $f=(f_\infty,f_1,f_2,\dots) \in \HS$.
\end{lemma}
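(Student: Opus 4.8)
The plan is a direct component-wise computation using the explicit formulas~\eqref{eq:pn} and~\eqref{eq:pinfty} for $P_n$ and $P_\infty$, followed by a rewriting via the intertwining relations~\eqref{eq:intertwining}. First I would compute $P_\infty P_n f$. Applying $P_n$ to $f=(f_\infty,f_1,f_2,\dots)$ gives, by~\eqref{eq:pn}, a vector whose $\infty$-component is $J_n J_n^* f_\infty + J_n W_n f_n$ and whose $n$-th component is $W_n J_n^* f_\infty + (\id_{\HS_n}-J_n^*J_n)f_n$, all other components being zero. Since $P_\infty$ retains only the $\infty$-component by~\eqref{eq:pinfty}, I obtain
\begin{equation*}
  P_\infty P_n f = \bigl(J_n J_n^* f_\infty + J_n W_n f_n,\, 0, \dots\bigr).
\end{equation*}

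Next I would compute $P_n P_\infty f$. Here $P_\infty f = (f_\infty, 0, \dots)$ has \emph{vanishing} $n$-th component, so substituting $g_\infty = f_\infty$ and $g_n = 0$ into~\eqref{eq:pn} yields
\begin{equation*}
  P_n P_\infty f = \bigl(J_n J_n^* f_\infty,\, 0, \dots, 0,\, W_n J_n^* f_\infty,\, 0, \dots\bigr),
\end{equation*}
with the entry $W_n J_n^* f_\infty$ in the $n$-th slot. Subtracting the two expressions, the $J_n J_n^* f_\infty$ terms cancel in the $\infty$-component, and the surviving entries are $-J_n W_n f_n$ (in the $\infty$-slot) and $W_n J_n^* f_\infty$ (in the $n$-slot), which is exactly~\eqref{eq:proj.commut1}.

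Finally, to obtain~\eqref{eq:proj.commut2} I would simply rewrite these two entries using the intertwining identities $J_n W_n = W_{\infty,n} J_n$ and $W_n J_n^* = J_n^* W_{\infty,n}$ from \Lemenum{defect.ops}{defect.ops.b}, which give $-J_n W_n f_n = -W_{\infty,n} J_n f_n$ and $W_n J_n^* f_\infty = J_n^* W_{\infty,n} f_\infty$. I expect no genuine obstacle: the statement is bookkeeping of which components survive each projection. The only point demanding care is the observation that $P_\infty f$ has zero $n$-th component, so the term $J_n W_n g_n$ in~\eqref{eq:pn} drops out when forming $P_n P_\infty f$ but is present in $P_\infty P_n f$; this asymmetry is precisely what produces the non-trivial commutator.
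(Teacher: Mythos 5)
Your proposal is correct and follows essentially the same route as the paper's proof: compute $P_\infty P_n f$ and $P_nP_\infty f$ directly from~\eqref{eq:pn} and~\eqref{eq:pinfty}, subtract, and obtain~\eqref{eq:proj.commut2} from the intertwining relations~\eqref{eq:intertwining}. No gaps.
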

\begin{proof}
  We have
  \begin{equation*}
    P_nP_\infty f
    = P_n(f_\infty,0,0,\dots)
    = \big(J_nJ_n^*f_\infty,0,\dots,0,W_nJ_n^*f_\infty,0,\dots)
  \end{equation*}
  and
  \begin{align*}
    P_\infty P_n f
    = (P_n f)_\infty
    =\bigl(J_nJ_n^*f_\infty+J_n W_nf_n,0, \dots \bigr);
  \end{align*}
  hence~\eqref{eq:proj.commut1} follows.
  Equality~\eqref{eq:proj.commut2} is a consequence
  of~\eqref{eq:intertwining}.
\end{proof}

We now continue with \Thm{proj.comm.jn.iso} within the setting of the
concrete parent space constructed above:
\begin{theorem}[equivalent characterisation of partial isometries]
  \label{thm:j.proj.com}
  Let $n \in \N$, then the following assertions are equivalent:
  \begin{enumerate}
  \item
    \label{j.proj.com.a}
    the identification operator $J_n$ is a partial isometry;

  \item
    \label{j.proj.com.b}
    the defect operator $W_n$ is an orthogonal projection (onto
    $\ker J_n$);

  \item
    \label{j.proj.com.b'}
    $\spec {W_n} \subset \{0,1\}$;

  \item
    \label{j.proj.com.c}
    $J_n W_n=0$;

  \item
    \label{j.proj.com.a'}
    the identification operator $J_n^*$ is a partial isometry;

  \item
    \label{j.proj.com.e}
    the defect operator $W_{\infty,n}$ is an orthogonal projection
    (onto $\ker J_n^*$);
  \item
    \label{j.proj.com.e'}
    $\spec {W_{\infty,n}} \subset \{0,1\}$;
  \item
    \label{j.proj.com.d}
    $W_{\infty,n}J_n=0$;
  \item
    \label{j.proj.com.f}
    the projections commute, i.e., $P_nP_\infty=P_\infty P_n$.
  \end{enumerate}
\end{theorem}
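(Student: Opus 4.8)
The plan is to prove the nine statements equivalent by establishing two symmetric four-cycles and then attaching \eqref{j.proj.com.f}. First I would treat the block involving $J_n$ and $W_n$, proving \eqref{j.proj.com.a} $\implies$ \eqref{j.proj.com.b'} $\implies$ \eqref{j.proj.com.b} $\implies$ \eqref{j.proj.com.c} $\implies$ \eqref{j.proj.com.a}. For the first implication, \Lemenum{part.iso}{part.iso.c} says $J_n$ is a partial isometry iff $J_n^*J_n$ is an orthogonal projection; since $J_n^*J_n = \id_{\HS_n} - W_n^2$ by \eqref{eq:def.wn}, this means $W_n^2$ is a projection, which for the nonnegative contraction $W_n^2$ holds iff $\spec{W_n^2} \subset \{0,1\}$, equivalently $\spec{W_n} \subset \{0,1\}$. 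For \eqref{j.proj.com.b'} $\implies$ \eqref{j.proj.com.b} I would use that a self-adjoint operator with spectrum in $\{0,1\}$ is an orthogonal projection, and identify its range by noting that $\ker J_n = \ker(J_n^*J_n) = \ker(\id_{\HS_n}-W_n^2) = \{f_n : W_n f_n = f_n\}$, the eigenspace of $W_n$ at $1$, onto which $W_n$ projects when $\spec{W_n}\subset\{0,1\}$. The step \eqref{j.proj.com.b} $\implies$ \eqref{j.proj.com.c} is immediate from $\ran W_n = \ker J_n$, and \eqref{j.proj.com.c} $\implies$ \eqref{j.proj.com.a} follows since $J_n W_n = 0$ yields $J_n W_n^2 = 0$, i.e. $J_n(\id_{\HS_n}-J_n^*J_n)=0$, which is the partial-isometry identity $J_n = J_nJ_n^*J_n$ of \Lem{part.iso}.

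Next I would obtain the symmetric block \eqref{j.proj.com.a'} $\implies$ \eqref{j.proj.com.e'} $\implies$ \eqref{j.proj.com.e} $\implies$ \eqref{j.proj.com.d} $\implies$ \eqref{j.proj.com.a'} verbatim, applying the first block to the contraction $J_n^*$, whose defect operator is $(\id_{\HS_\infty}-J_nJ_n^*)^{1/2}=W_{\infty,n}$; here the analogue of \eqref{j.proj.com.c} is $J_n^*W_{\infty,n}=0$, which is equivalent to \eqref{j.proj.com.d} by taking adjoints since $W_{\infty,n}$ is self-adjoint. To glue the two blocks I would use the intertwining relation \eqref{eq:intertwining}, $J_nW_n = W_{\infty,n}J_n$: it shows directly that \eqref{j.proj.com.c} $\iff$ \eqref{j.proj.com.d}, so the two four-cycles merge into a single family of equivalences (the identity \eqref{j.proj.com.a} $\iff$ \eqref{j.proj.com.a'} from \Lem{part.iso} provides an alternative bridge).

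It remains to bring in the commuting condition \eqref{j.proj.com.f}. The cleanest route uses the explicit commutator from \Lem{proj.commut}: by \eqref{eq:proj.commut1}, $(P_nP_\infty - P_\infty P_n)f$ has components $-J_nW_nf_n$ and $W_nJ_n^*f_\infty$, so it vanishes for every $f=(f_\infty,f_1,f_2,\dots)$ exactly when $J_nW_n=0$ and $W_nJ_n^*=(J_nW_n)^*=0$; both reduce to $J_nW_n=0$, giving \eqref{j.proj.com.f} $\iff$ \eqref{j.proj.com.c} and closing the web. Alternatively one could simply invoke \Thm{proj.comm.jn.iso}, which already equates $J_n$ being a partial isometry with $P_nP_\infty=P_\infty P_n$, applied to the concrete parent space of \Def{parent.hs}. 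I do not anticipate a genuine obstacle; the only point requiring care is the range identification in \eqref{j.proj.com.b'} $\implies$ \eqref{j.proj.com.b}, where one must check not merely that $W_n$ is a projection but that it projects precisely onto $\ker J_n$, which is exactly the eigenspace computation indicated above.
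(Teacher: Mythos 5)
Your proof is correct and uses essentially the same ingredients as the paper's: the characterisation $I=II^*I$ from \Lem{part.iso}, the intertwining relation~\eqref{eq:intertwining}, and the explicit commutator formula of \Lem{proj.commut}. The only structural difference is cosmetic — you close the first cycle internally via \eqref{j.proj.com.c}$\implies$\eqref{j.proj.com.a} and attach \eqref{j.proj.com.f} through the equivalence with \eqref{j.proj.com.c}, whereas the paper routes \eqref{j.proj.com.f}$\implies$\eqref{j.proj.com.a} through \Thm{proj.comm.jn.iso}; both wirings are sound.
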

\begin{proof}
  \itemref{j.proj.com.a}$\implies$\itemref{j.proj.com.b}:
  We have
  \begin{equation*}
    W_n^4
    =(\id_{\HS_n}-J_n^*J_n)^2
    =\id_{\HS_n} - 2 J_n^*J_n +J_n^*J_nJ_n^*J_n
    =\id_{\HS_n} - J_n^*J_n
    =W_n^2
  \end{equation*}
  as $J_n=J_nJ_n^*J_n$.  In particular, $W_n^2$ is idempotent and
  (clearly) self-adjoint, hence an orthogonal projection with
  \begin{equation*}
    W_n^2 f_n=f_n
    \quadiff
    J_n^* J_n f_n=0
    \quadiff
    f_n \in \ker J_n,
  \end{equation*}
  i.e., onto $\ker J_n$.  For an orthogonal projection $P$ , we also
  have $P^{1/2}=P$, and hence $W_n$ itself is an orthogonal
  projection.

  \itemref{j.proj.com.b}$\iff$\itemref{j.proj.com.b'} and
  \itemref{j.proj.com.e}$\iff$\itemref{j.proj.com.e'} follow from the
  fact that a self-adjoint operator $P$ is an orthogonal projection
  if and only if $\spec P \subset \{0,1\}$.

  \itemref{j.proj.com.b}$\implies$\itemref{j.proj.com.c} is obvious.

  \itemref{j.proj.com.a}$\iff$\itemref{j.proj.com.a'} follows from
  \Lemenum{part.iso}{part.iso.a}$\iff$\itemref{part.iso.b}.

  \itemref{j.proj.com.a'}$\implies$\itemref{j.proj.com.e}%
  $\implies$\itemref{j.proj.com.d} follow as above (use
  $(W_{\infty,n}J_n)^*=J_n^*W_{\infty,n}$).

  \itemref{j.proj.com.c}$\lor$\itemref{j.proj.com.d}$\implies$%
  \itemref{j.proj.com.f} follows from \Lem{proj.commut}.

  \itemref{j.proj.com.f}$\implies$\itemref{j.proj.com.a} has already
  been proven in
  \Thmenum{proj.comm.jn.iso}{j.proj.comm.d}$\implies$\itemref{j.proj.comm.a}.
\end{proof}
In the special case of (co-)isometries we have:
\begin{corollary}[(co-)isometries]
  \label{cor:j.proj.com}
   $J_n$ is an isometry
  if and only if $W_n=0$; similarly, $J_n$ is a co-isometry if and
  only if $W_{\infty,n}=0$.
\end{corollary}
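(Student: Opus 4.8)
The plan is to reduce each equivalence to the defining relation of a (co-)isometry together with the elementary fact that a non-negative self-adjoint operator vanishes precisely when its square does. No serious machinery is needed here; everything follows from the definitions in~\eqref{eq:def.wn} and the basic spectral properties collected in \Lemenum{defect.ops}{defect.ops.a}.

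First I would recall that $J_n$ is an isometry if and only if $J_n^* J_n = \id_{\HS_n}$, which is the same as $\id_{\HS_n} - J_n^* J_n = 0$. By the definition of the defect operator in~\eqref{eq:def.wn}, the left-hand side is exactly $W_n^2$, so $J_n$ is an isometry if and only if $W_n^2 = 0$. It then remains to observe that $W_n^2 = 0$ is equivalent to $W_n = 0$: since $W_n$ is self-adjoint by \Lemenum{defect.ops}{defect.ops.a}, we have $\normsqr[\HS_n]{W_n f_n} = \iprod[\HS_n]{f_n}{W_n^2 f_n}$ for every $f_n \in \HS_n$, so $W_n^2 = 0$ forces $W_n f_n = 0$ for all $f_n$, while the reverse implication is trivial. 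Chaining these equivalences proves the first claim.

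For the second claim I would argue symmetrically. By definition a co-isometry is the adjoint of an isometry, so $J_n$ is a co-isometry if and only if $J_n^*$ is an isometry, i.e.\ $J_n J_n^* = \id_{\HS_\infty}$, equivalently $\id_{\HS_\infty} - J_n J_n^* = 0$. By~\eqref{eq:def.wn} this reads $W_{\infty,n}^2 = 0$, and the same non-negativity argument, now applied to the self-adjoint operator $W_{\infty,n}$, gives $W_{\infty,n}^2 = 0 \iff W_{\infty,n} = 0$. This yields the second claim. The only point that genuinely requires a word of justification is the equivalence $W^2 = 0 \iff W = 0$ for a non-negative self-adjoint $W$, which follows immediately from the inner-product identity above (alternatively from the spectral mapping theorem, since $\spec{W} \subset [0,1]$ together with $\spec{W^2} = \{0\}$ forces $\spec{W} = \{0\}$, i.e.\ $W = 0$); I do not expect any real obstacle beyond this.
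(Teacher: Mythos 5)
Your argument is correct: the paper states this corollary without proof, treating it as immediate from the definition~\eqref{eq:def.wn} of the defect operators, and your chain $J_n$ isometry $\iff J_n^*J_n=\id_{\HS_n} \iff W_n^2=0 \iff W_n=0$ (the last step via $\normsqr[\HS_n]{W_nf_n}=\iprod[\HS_n]{f_n}{W_n^2f_n}$ for the self-adjoint $W_n$), together with its mirror for $W_{\infty,n}$, is exactly the intended reasoning. Nothing is missing.
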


Let us finally state another property of our parent space constructed
here:
\begin{proposition}
  \label{prp:proj.conv.strongly}
  Assume that $\norm{J_n}\le 1$ for all $n \in \N$ and
  that~\eqref{eq:que2} holds then there is a parent space such that
  $P_n \strongto P_\infty$ strongly, i.e.,
  $\norm[\HSgen]{P_n f - P_\infty f} \to 0$ for all $f \in \HSgen$.
\end{proposition}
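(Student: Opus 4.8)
The plan is to take the concrete parent space $\HSgen$ of \Def{parent.hs}, for which the orthogonal projections $P_n$ and $P_\infty$ are already computed explicitly in~\eqref{eq:pn} and~\eqref{eq:pinfty}. Writing $\id_{\HS_\infty}-J_nJ_n^*=W_{\infty,n}^2$ and $\id_{\HS_n}-J_n^*J_n=W_n^2$, I would first subtract these two formulas: for $f=(f_\infty,f_1,f_2,\dots)\in\HSgen$ the vector $(P_n-P_\infty)f$ has only two possibly nonzero entries,
\[
  (P_n-P_\infty)f
  =\bigl(-W_{\infty,n}^2 f_\infty+J_nW_nf_n,\,0,\dots,0,\,
        W_n(J_n^*f_\infty+W_nf_n),\,0,\dots\bigr),
\]
the second entry sitting in the $n$-th slot $\HS_n$. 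Hence
\[
  \normsqr[\HSgen]{(P_n-P_\infty)f}
  =\normsqr[\HS_\infty]{W_{\infty,n}^2 f_\infty-J_nW_nf_n}
   +\normsqr[\HS_n]{W_n(J_n^*f_\infty+W_nf_n)},
\]
and it suffices to show that each summand tends to $0$ for every fixed $f$.

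The key step is to prove that $\id_{\HS_\infty}-J_nJ_n^*=W_{\infty,n}^2\strongto 0$ strongly on the \emph{fixed} space $\HS_\infty$. Since $\norm{J_n}\le 1$ we have $0\le \id_{\HS_\infty}-J_nJ_n^*\le\id_{\HS_\infty}$, so these operators are uniformly bounded by $1$. For $h=R_\infty g$ in the range of $R_\infty$, the second estimate in~\eqref{eq:que2} gives $\norm[\HS_\infty]{(\id_{\HS_\infty}-J_nJ_n^*)h}\le\delta_n\norm[\HS_\infty]{g}\to 0$. As $\D_\infty$ is self-adjoint, $\ran R_\infty=\dom\D_\infty$ is dense in $\HS_\infty$, so the uniform bound together with the usual three-$\eps$ approximation argument yields $\norm[\HS_\infty]{W_{\infty,n}^2 f_\infty}\to 0$ for all $f_\infty\in\HS_\infty$. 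From this I would also read off $\normsqr[\HS_\infty]{W_{\infty,n}f_\infty}=\iprod[\HS_\infty]{f_\infty}{W_{\infty,n}^2 f_\infty}\le\norm[\HS_\infty]{f_\infty}\,\norm[\HS_\infty]{W_{\infty,n}^2 f_\infty}\to 0$.

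It then remains to combine these facts with the observation that $\norm[\HS_n]{f_n}\to 0$, which holds because $\sum_n\normsqr[\HS_n]{f_n}<\infty$ for $f\in\HSgen$. For the first summand, $\norm[\HS_\infty]{W_{\infty,n}^2 f_\infty-J_nW_nf_n}\le\norm[\HS_\infty]{W_{\infty,n}^2 f_\infty}+\norm[\HS_n]{f_n}\to 0$, using $\norm{J_nW_n}\le 1$. In the second summand, $\norm[\HS_n]{W_n^2 f_n}\le\norm[\HS_n]{f_n}\to 0$; the remaining term $W_nJ_n^*f_\infty$ lives in the \emph{varying} space $\HS_n$, so that a direct ``strong convergence'' statement is not literally available, and this is the main obstacle. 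It is resolved by the intertwining identity~\eqref{eq:intertwining}, $W_nJ_n^*=J_n^*W_{\infty,n}$, which transports the defect operator back to the fixed space: $\norm[\HS_n]{W_nJ_n^*f_\infty}=\norm[\HS_n]{J_n^*W_{\infty,n}f_\infty}\le\norm[\HS_\infty]{W_{\infty,n}f_\infty}\to 0$. Both summands of $\normsqr[\HSgen]{(P_n-P_\infty)f}$ therefore vanish in the limit, giving $P_n\strongto P_\infty$ for this parent space.
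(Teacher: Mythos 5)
Your proof is correct and follows essentially the same route as the paper: the concrete parent space of \Def{parent.hs}, the explicit projection formulas~\eqref{eq:pn}--\eqref{eq:pinfty}, the intertwining identity~\eqref{eq:intertwining} to move $W_n J_n^* f_\infty$ back to the fixed space $\HS_\infty$, and the second estimate in~\eqref{eq:que2} to kill the $W_{\infty,n}$-terms. The only (harmless) differences are organisational: the paper first restricts to the dense set of finitely supported $f$ with $f_\infty\in\ran R_\infty$ and invokes the operator-norm bound $\normsqr[\Lin{\HS_\infty}]{W_{\infty,n}R_\infty}\le\norm[\Lin{\HS_\infty}]{R_\infty}\,\delta_n$ from \Cor{defect.ops}, whereas you work with arbitrary $f\in\HSgen$, absorb the $f_n$-contributions via $\norm[\HS_n]{f_n}\to 0$, and deduce $W_{\infty,n}\strongto 0$ from $W_{\infty,n}^2\strongto 0$ by the Cauchy--Schwarz trick $\normsqr[\HS_\infty]{W_{\infty,n}h}\le\norm[\HS_\infty]{h}\,\norm[\HS_\infty]{W_{\infty,n}^2h}$.
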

\begin{proof}
  Let $\HSgen$ be the parent space as constructed in
  \Subsec{concrete.iso}.  For $f \in \HSgen$ with
  $f=(f_\infty,f_1,f_2,\dots)$ such that $f_n\ne 0$ only for a finite
  number of elements, say $f_n=0$ for all $n>n_0$ and some
  $n_0 \in \N$.  Moreover, we assume that $f_\infty=R_\infty g_\infty$
  for some $g_\infty \in \HS_\infty$.  Then we have
  \begin{align}
    \nonumber
    \normsqr[\HS]{(P_n-P_\infty)f}
    &= \normsqr[\HS_\infty]{-(f_\infty-J_nJ_n^*f_\infty) + J_nW_n f_n}
      + \normsqr[\HS_n]{(f_n - J_n^*J_n f_n) + J_n^*W_{\infty,n} f_\infty}\\
    \label{eq:proof.proj.strong}
    &= \normsqr[\HS_\infty]{f_\infty-J_nJ_n^*f_\infty}
      + \normsqr[\HS_n]{J_n^*W_{\infty,n} f_\infty}\\
    \nonumber
    &\le \normsqr[\HS_\infty]{(\id_{\HS_\infty} -J_nJ_n^*)R_\infty g_\infty}
      + \normsqr[\HS_\infty]{W_{\infty,n} R_\infty g_\infty}
  \end{align}
  for $n>n_0$ by~\eqref{eq:intertwining}
  and~\eqref{eq:pn}--\eqref{eq:pinfty}.  Now,
  \begin{equation*}
    \normsqr[\Lin{\HS_\infty}]{W_{\infty,n} R_\infty}
    \le \norm[\Lin{\HS_\infty}]{R_\infty} \delta_n \to 0
  \end{equation*}
  as $n \to \infty$ using~\eqref{eq:defect.op.est2}
  and~\eqref{eq:que2}.  In particular, we conclude that
  $\normsqr[\HS]{(P_n-P_\infty)f} \to 0$ as $n \to \infty$.  Since the
  set of $f$ of the above form are dense in $\HSgen$, the result
  follows.
\end{proof}

A simple consequence is the following:
\begin{corollary}
  \label{cor:proj.conv.strongly}
  The commutator of $P_n$ and $P_\infty$ converges strongly to $0$.
\end{corollary}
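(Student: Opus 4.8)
The plan is to derive the statement directly from \Prp{proj.conv.strongly}, which (for the parent space constructed in \Subsec{concrete.iso}) provides $P_n \strongto P_\infty$ strongly, together with the elementary fact that $P_\infty$ is an orthogonal projection, so that $P_\infty^2=P_\infty$ and $\norm{P_\infty} \le 1$. Fix an arbitrary $f \in \HSgen$; the goal is to show that $(P_nP_\infty-P_\infty P_n)f \to 0$ in $\HSgen$, and I would treat the two summands $P_nP_\infty f$ and $P_\infty P_n f$ separately, showing that each converges to the common limit $P_\infty f$.

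For the first summand I would apply strong convergence to the \emph{fixed} vector $P_\infty f$: since $P_n g \to P_\infty g$ for every $g \in \HSgen$, taking $g = P_\infty f$ and using $P_\infty^2 = P_\infty$ gives $P_n P_\infty f \to P_\infty P_\infty f = P_\infty f$. For the second summand I would instead use continuity of the fixed bounded operator $P_\infty$: since $P_n f \to P_\infty f$ in norm, the estimate $\norm[\HSgen]{P_\infty P_n f - P_\infty f} \le \norm{P_\infty}\,\norm[\HSgen]{P_n f - P_\infty f} \le \norm[\HSgen]{P_n f - P_\infty f} \to 0$ yields $P_\infty P_n f \to P_\infty f$ as well. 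Combining the two limits, $(P_nP_\infty-P_\infty P_n)f \to P_\infty f - P_\infty f = 0$, and since $f$ was arbitrary the commutator converges strongly to $0$.

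The only subtlety worth flagging --- and it is not really an obstacle --- is that strong operator convergence does \emph{not} pass to products in general, so one cannot simply ``multiply $P_n \to P_\infty$ by $P_\infty$ on both sides at once''. The asymmetry is resolved exactly as above: on the left factor $P_nP_\infty$ one invokes strong convergence against the single fixed vector $P_\infty f$, while on the right factor $P_\infty P_n$ one invokes norm-continuity of the single fixed operator $P_\infty$. Both manoeuvres are legitimate, and together they give the claim with no further hypotheses beyond those already assumed in \Prp{proj.conv.strongly}.
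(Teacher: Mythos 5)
Your proof is correct and takes essentially the same route as the paper: both deduce the claim from \Prp{proj.conv.strongly} by splitting the commutator and using the triangle inequality together with $P_\infty^2=P_\infty$ and $\norm[\Lin\HSgen]{P_\infty}\le 1$. The only (immaterial) difference is that the paper uses the telescoping $P_nP_\infty-P_\infty P_n=P_n(P_\infty-P_n)+P_\infty(P_\infty-P_n)+(P_n-P_\infty)$, bounded by $3\norm[\HSgen]{(P_n-P_\infty)f}$, so that strong convergence is invoked only at the single vector $f$, whereas you additionally test it at the vector $P_\infty f$.
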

\begin{proof}
  We have
  \begin{align*}
    \norm[\HS]{(P_nP_\infty -P_\infty P_n)f}
    &=\norm[\HS]{(P_nP_\infty- P_n + P_\infty-P_\infty P_n + P_n- P_\infty)f}\\
    &\leq \norm[\HS]{(P_n- P_\infty)f}
      + \norm[\HS]{(P_\infty- P_n)f}
      +\norm[\HS]{(P_\infty- P_n)f}
  \end{align*}
  for all $f \in \HSgen$, and the result follows from
  \Prp{proj.conv.strongly}.
\end{proof}
\begin{remarks}[convergence of the projections]
  \label{rem:proj.conv.strongly}
  \indent
  \begin{enumerate}
  \item We have seen the relevance of $P_n \strongto P_\infty$ already
    in Stummel's concept discussed near~\eqref{eq:proj.conv.strongly}.
  \item
    \label{proj.conv.strongly.a}
    \myparagraph{Strong convergence not an invariant.}  %
    Note that $P_n \strongto P_\infty$ is not an invariant of a parent
    space, i.e., it may be true in one parent space, but not in
    another one.  In \Prp{proj.conv.strongly} we have shown
    $P_n \strongto P_\infty$ for the concrete parent space of
    \Subsec{concrete.iso}, but it may not be true in others (see
    \Ex{weidmanns-excercise}).
  \item
    \label{proj.conv.strongly.b}
    \myparagraph{No norm convergence of the projections.}  %
    We cannot expect that $\norm{P_n-P_\infty} \to 0$ in
    operator norm: for $f=(f_\infty,0,\dots)$ we conclude
    \begin{align*}
      \normsqr[\HS]{(P_n-P_\infty)f}
      \ge \normsqr[\HS_\infty]{W_{\infty,n}^2 f_\infty}
    \end{align*}
    from~\eqref{eq:proof.proj.strong}.  If $J_n$ is a partial isometry
    then $W_{\infty,n}$ is an orthogonal projection onto $\ker J_n^*$,
    and $\ker J_n^* \ne \{0\}$ if $J_n$ is not a co-isometry.  In
    particular, $\norm{P_n-P_\infty} \ge \norm{W_{\infty,n}} =1$ in
    this case.  If $J_n$ is a co-isometry (and not an isometry), then
    $W_n$ is an orthogonal projection onto $\ker J_n \ne \{0\}$, and
    we have
    \begin{align*}
      \normsqr[\HS]{(P_n-P_\infty)f}
      \ge \normsqr[\HS_\infty]{W_n^2 f_n}
    \end{align*}
    for $f=(0,0,\dots,f_n,0,\dots) \in \HSgen$ again
    by~\eqref{eq:proof.proj.strong}.  As before, we have
    $\norm{P_n-P_\infty} \ge \norm{W_n} =1$.  In particular, we have
    shown that if $J_n$ is a partial isometry (and not unitary), then
    $\norm{P_n - P_\infty}=1$.  If $J_n$ is unitary then
    $P_n=P_\infty$.

    Nevertheless, the commutator $P_n P_\infty - P_\infty P_n$ (if not
    already $0$) might converge to $0$ in operator norm in some
    examples as we will see in \Subsec{met.graphs2}.
  \end{enumerate}
\end{remarks}

\subsection{A minimal parent space}
\label{ssec:min.parent.space}

Let us now see how the \emph{minimal} parent space (cf.\
\Defenum{dreamworld}{dreamworld.c}) in the concrete construction of
\Def{parent.hs} looks like:
\begin{lemma}
  \label{lem:hs.min}
  Assume that $W_n(\HS_n)$ is closed\footnote{This is equivalent with
    the fact that $0$ is isolated in $\spec {W_n}$}.  then we have
  \begin{equation*}
    \HSmin =
    \HS_\infty \oplus \bigoplus_{n \in \N} W_n(\HS_n),
  \end{equation*}
  i.e., we can replace $\HS_n$ by the orthogonal complement in $\HS_n$
  of the space where $J_n$ is an isometry.  In particular, if $J_n$ is
  an isometry, then $\ran W_n=\{0\}$.
\end{lemma}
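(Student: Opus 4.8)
The plan is to unwind the explicit definitions of the associated isometries in \Def{parent.hs} and to identify $\HSmin = \clolin \bigcup_{n \in \Nbar} \iota_n(\HS_n)$ with the closed subspace
\[
  V := \HS_\infty \oplus \bigoplus_{n \in \N} W_n(\HS_n) \subseteq \HSgen,
\]
i.e.\ the vectors $(g_\infty, g_1, g_2, \dots) \in \HSgen$ with $g_\infty \in \HS_\infty$ and $g_n \in W_n(\HS_n)$ for every $n$. I would establish the two inclusions $\HSmin \subseteq V$ and $V \subseteq \HSmin$ separately.

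For $\HSmin \subseteq V$, I would read off from~\eqref{eq:concrete.iso.n} and~\eqref{eq:concrete.iso.infty} that each generator already lies in $V$: the vector $\iota_n f_n$ has $\infty$-component $J_n f_n \in \HS_\infty$, $n$-th component $W_n f_n \in W_n(\HS_n)$, and all remaining components zero, while $\iota_\infty f_\infty = (f_\infty, 0, \dots)$ sits in the $\infty$-slot. Hence $\bigcup_{n \in \Nbar} \iota_n(\HS_n) \subseteq V$. Because each $W_n(\HS_n)$ is assumed closed, $V$ is itself a closed subspace of $\HSgen$, so passing to the closed linear span preserves the inclusion.

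For the reverse inclusion, I would first note that $\iota_\infty(\HS_\infty)$ is exactly the $\infty$-slot, so $\HS_\infty$ (placed in the first component) lies in $\HSmin$. The key step is then the observation that, for fixed $m \in \N$ and $f_m \in \HS_m$, both $\iota_m f_m$ and $\iota_\infty(J_m f_m) = (J_m f_m, 0, \dots)$ lie in $\HSmin$, so their difference
\[
  \iota_m f_m - \iota_\infty(J_m f_m) = (0, \dots, 0, W_m f_m, 0, \dots)
\]
(with the nonzero entry in the $m$-th slot) also lies in $\HSmin$. Letting $f_m$ range over $\HS_m$ shows that the whole $m$-th slot $W_m(\HS_m)$ is contained in $\HSmin$. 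The algebraic span of the $\infty$-slot together with all the slots $W_m(\HS_m)$ then consists of the finitely supported vectors of $V$, which are dense in $V$; since $\HSmin$ is closed, this gives $V \subseteq \HSmin$, and combining the two inclusions yields the claim.

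The concluding ``in particular'' follows at once from \Cor{j.proj.com}: if $J_n$ is an isometry then $W_n = 0$, whence $\ran W_n = \{0\}$ and the $n$-th summand disappears. I do not anticipate a serious obstacle; the one point that genuinely uses the hypothesis is that the closedness of each $W_n(\HS_n)$ is what makes $V$ a closed subspace (so that the first inclusion closes up and the right-hand side is literally the stated direct sum rather than its closure) --- precisely the equivalence recorded in the footnote that $W_n(\HS_n)$ is closed iff $0$ is isolated in $\spec{W_n}$. The only real care needed is the bookkeeping of which component of $\HSgen$ each vector occupies.
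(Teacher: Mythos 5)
Your proposal is correct and follows essentially the same route as the paper: both inclusions are obtained by reading off the components of $\iota_n f_n$ and $\iota_\infty f_\infty$, and the reverse inclusion hinges on exactly the identity $\iota_n g_n - \iota_\infty J_n g_n = (0,\dots,0,W_n g_n,0,\dots)$ that the paper uses. Your additional remarks on where the closedness of $W_n(\HS_n)$ enters and on the density of finitely supported vectors only make explicit what the paper leaves implicit.
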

\begin{proof}
  ``$\subseteq$'': clearly $\iota_\infty(\HS_\infty) \subset \HSmin$,
  and as the $n$-th component of $\iota_n f_n$ is $W_nf_n$, hence an
  element of $\clo{\ran W_n}$.

  ``$\supseteq$'': Let $f=(f_\infty,0,\dots)$, then
  $f=\iota_\infty f_\infty \in \HSmin$.  Moreover, if
  $f=(0,\dots,0,f_n,0,\dots) \in \HS_\infty \oplus \bigoplus_{n \in
    \N} W_n(\HS_n)$ with $f_n=W_n g_n$ and $g_n \in \HS_n$ then
  $f=\iota_n g_n - \iota_\infty J_n g_n \in \HSmin$.%
\end{proof}

If $J_n$ is a partial isometry, we conclude from
\Subsec{eq.char.comm.proj}:
\begin{proposition}
  \label{prp:jn.part.iso.concrete}
  Assume that $J_n$ is a partial isometry for each $n \in \N$.  Then
  \begin{equation*}
    \HSmin
    = \HS_\infty \oplus \bigoplus_{n \in \N} \ker J_n,
    \quad
    \iota_nf_n=(J_n f_n,0,\dots,0,f_n'',0,\dots),
    \quad
    \iota_\infty f_\infty=(f_\infty,0,\dots),
  \end{equation*}
  where $f_n''=f_n-J_n^*J_n f_n$ is the orthogonal projection of $f_n$
  onto $\ker J_n$.
\end{proposition}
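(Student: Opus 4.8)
The plan is to obtain the statement by feeding the partial-isometry structure of $J_n$, as captured in Theorem \ref{thm:j.proj.com}, into the description of the minimal parent space in Lemma \ref{lem:hs.min}.

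First I would invoke \Thmenum{j.proj.com}{j.proj.com.b}: since each $J_n$ is a partial isometry, the defect operator $W_n$ is the orthogonal projection onto $\ker J_n$. Consequently $W_n(\HS_n)=\ran W_n=\ker J_n$ is the range of an orthogonal projection, hence closed, so the hypothesis of Lemma \ref{lem:hs.min} holds for every $n\in\N$ (equivalently $\spec{W_n}\subset\{0,1\}$, so $0$ is isolated in $\spec{W_n}$, matching the footnote there). Applying Lemma \ref{lem:hs.min} then gives at once
\[
  \HSmin
  = \HS_\infty \oplus \bigoplus_{n\in\N} W_n(\HS_n)
  = \HS_\infty \oplus \bigoplus_{n\in\N} \ker J_n,
\]
which is the claimed form of the minimal parent space, with $\iota_\infty$ unchanged from \eqref{eq:concrete.iso.infty}.

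It remains to rewrite the $n$-th component of $\iota_n$ from \eqref{eq:concrete.iso.n}. Because $J_n$ is a partial isometry, $J_n^*J_n$ is the orthogonal projection onto the initial space $(\ker J_n)^\perp$ (\Lemenum{part.iso}{part.iso.c}), so $W_n^2=\id_{\HS_n}-J_n^*J_n$ is the orthogonal projection onto $\ker J_n$; being a projection it equals its own square root, whence $W_n=\id_{\HS_n}-J_n^*J_n$. Therefore $W_nf_n=f_n-J_n^*J_nf_n=f_n''$ is exactly the orthogonal projection of $f_n$ onto $\ker J_n$, giving the stated formula $\iota_nf_n=(J_nf_n,0,\dots,0,f_n'',0,\dots)$.

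The single point requiring care is the bookkeeping identity $W_n=\id_{\HS_n}-J_n^*J_n$ valid in the partial-isometry case; with it in hand the proposition is a direct substitution into Lemma \ref{lem:hs.min}, so I expect no real obstacle.
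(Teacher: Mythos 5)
Your proposal is correct and follows essentially the same route as the paper: cite \Thmenum{j.proj.com}{j.proj.com.b} to identify $W_n=\id_{\HS_n}-J_n^*J_n$ as the orthogonal projection onto $\ker J_n$, conclude that $W_n(\HS_n)=\ker J_n$ is closed, and apply \Lem{hs.min}. Your additional remark that a projection equals its own square root (so the formula for $\iota_n f_n$ simplifies to the stated one) is exactly the bookkeeping the paper leaves implicit.
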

\begin{proof}
  If $J_n$ is a partial isometry then $W_n=\id_{\HS_n}-J_n^*J_n$ is
  the orthogonal projection onto $\ker J_n$ by
  \Thmenum{j.proj.com}{j.proj.com.b}.  In particular,
  $W_n(\HS_n)=\ker J_n$ is closed and \Lem{hs.min} applies.
\end{proof}

\subsection{A change of identification operators}
\label{ssec:change.id}
As a final step in the proof of our main result \Thm{main}, we want to
get rid of the condition $\norm {J_n} \le 1$ used in \Cor{main2a} and
also in \Subsec{concrete.iso}.  Therefore, we rescale the
identification operators so that their norm is less than or equal to
$1$, but the convergence is maintained.  To do so, we have a closer
look on the effect of a slight change of the identification operators
$J_n$.
\begin{definition}[$\delta$-equivalence of identification operators]
  \label{def:que-equivalence}
  The identification operators $J$ and
  $\hat J \in \mathfrak{L}(\HS_1,\HS_2)$ are called
  \emph{$\delta$-equivalent} for $\delta >0$, if
  \begin{equation}
    \label{eq:def_delta_eq}
  \norm[\mathfrak{L}(\HS_1,\HS_2)]{J-\hat{J}} \leq \delta.
\end{equation}
\end{definition}
If this $\delta$-equivalence among two pairs is given, then
quasi-unitary equivalence gets transferred.

The following two propositions show that we can always assume
$\norm[\mathfrak{L}(\HS_n,\HS_\infty)]{J_n} \leq 1$ without loss of
generality.
\begin{proposition}
  \label{prp:eq_dann_que}
  Let $\delta>0$ and $\delta'>0$.  If $\D_1$ and $\D_2$ are
  $\delta$-quasi-unitary equivalent w.r.t.\ $J$, and if $J$ and
  $\hat J$ are $\delta'$-equivalent, then $\D_1$ and $\D_2$ are
  $\hat \delta$-quasi-unitary equivalent w.r.t.\ $(\hat{J},\hat{J}^*)$
  with
  \begin{equation*}
    \hat \delta
    = 
    \delta + \delta'\max\{\norm{R_1},\norm{R_2}\}
    \bigl(2(1+\delta)+\delta'\bigr)
    \in \delta+\Err(\delta').
  \end{equation*}
\end{proposition}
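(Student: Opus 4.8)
The plan is to write $\hat J = J + E$ with error operator $E := \hat J - J$, so that $\norm{E} \le \delta'$ by $\delta'$-equivalence (cf.~\eqref{eq:def_delta_eq}), and then to verify each of the three defining inequalities of quasi-unitary equivalence (\Def{que}) for the pair $(\hat J, \hat J^*)$. In every case I would expand the relevant expression in $J$ and $E$, absorb the $J$-only part into the estimates already available for $J$, and control the remaining terms using $\norm{E} \le \delta'$, $\norm{J} \le 1 + \delta$, and the resolvent norms.

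First I would treat the intertwining estimate~\eqref{eq:que.c}. From $R_2 \hat J - \hat J R_1 = (R_2 J - J R_1) + (R_2 E - E R_1)$ the triangle inequality gives $\norm{R_2 \hat J - \hat J R_1} \le \delta + \delta'(\norm{R_1} + \norm{R_2}) \le \delta + 2\delta'\max\{\norm{R_1},\norm{R_2}\}$, which is bounded by $\hat\delta$ since $2(1+\delta)+\delta' \ge 2$. Next, for the first inequality in~\eqref{eq:que.b}, I would expand $\hat J^* \hat J = J^*J + J^*E + E^*J + E^*E$, so that $(\id_{\HS_1} - \hat J^* \hat J) R_1 = (\id_{\HS_1} - J^*J)R_1 - (J^*E + E^*J + E^*E)R_1$; bounding the three correction terms by $\norm{R_1}\bigl(2\norm{J}\,\norm{E} + \norm{E}^2\bigr) \le \norm{R_1}\,\delta'\bigl(2(1+\delta) + \delta'\bigr)$ yields precisely the stated $\hat\delta$. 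The symmetric expansion $\hat J \hat J^* = J J^* + J E^* + E J^* + E E^*$ handles the second inequality in~\eqref{eq:que.b} verbatim, with $R_1$ replaced by $R_2$.

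Finally, the norm bound~\eqref{eq:que.a} is immediate: $\norm{\hat J} \le \norm{J} + \norm{E} \le 1 + \delta + \delta'$. Collecting the three contributions, each has the form $\delta + (\text{const})\,\delta'$ and therefore lies in $\delta + \Err(\delta')$; the closed form quoted in the statement, $\hat\delta = \delta + \delta'\max\{\norm{R_1},\norm{R_2}\}\bigl(2(1+\delta) + \delta'\bigr)$, is exactly the contribution coming from~\eqref{eq:que.b}, and the actual distance bound for $\hat J$ is the maximum of the three (the bound from~\eqref{eq:que.a} being $\delta + \delta'$, the one from~\eqref{eq:que.c} being $\delta + 2\delta'\max\{\norm{R_1},\norm{R_2}\}$). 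In all cases the correction is linear in $\delta'$ with a coefficient independent of $\delta'$, which gives the asymptotic claim $\hat\delta \in \delta + \Err(\delta')$.

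I expect the only delicate point to be producing the exact constant $2(1+\delta)+\delta'$ in~\eqref{eq:que.b}: this requires retaining all three cross terms $J^*E$, $E^*J$, $E^*E$ separately and invoking the sharp bound $\norm{J}\le 1+\delta$ rather than a crude $\norm{J}\le 2$. A secondary, entirely routine point is the case $z_0 \in \C\setminus\R$, where \Def{que} additionally demands the analogous estimates with each $R_j$ replaced by $R_j^* = (\D_j - \conj{z_0})^{-1}$; since $\norm{R_j^*} = \norm{R_j}$, every computation above carries over unchanged and the value of $\hat\delta$ is unaffected.
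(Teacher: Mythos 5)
Your proposal is correct and follows essentially the same route as the paper: write $\hat J = J + (\hat J - J)$, expand each of the three quasi-unitarity estimates, absorb the $J$-part into the known $\delta$-bounds, and control the cross terms via $\norm{\hat J - J}\le\delta'$, $\norm{J}\le 1+\delta$ and the resolvent norms, arriving at the same constants $\delta+\delta'$, $\delta+\delta'\norm{R_j}(2(1+\delta)+\delta')$ and $\delta+\delta'(\norm{R_1}+\norm{R_2})$. Your closing remarks on taking the maximum of the three bounds and on the $z_0\in\C\setminus\R$ case match (indeed slightly sharpen) what the paper does.
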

\begin{proof}
  For the proof we have to face the four inequalities given in
  \Def{que}: for the first, we have
  \begin{equation}
    \norm[\mathfrak{L}(\HS_1,\HS_2)]{\hat{J}}
    \le  \norm[\Lin{\HS_1,\HS_2}] J
    +    \norm[\Lin{\HS_1,\HS_2}]{\hat J-J}
    \le 1+\delta+\delta'.
  \end{equation}
  Similarly, we obtain
  \begin{align*}
    \bignorm[\Lin{\HS_1}]{(\id_{\HS_1}-\hat{J}^*\hat{J}){R_1} }
    &= \norm[\Lin{\HS_1}]{\big(\id_{\HS_1}
      -(J+\hat{J}-J)^*(J+\hat{J}-J) \big)R_1}\\
    &= \norm[\Lin{\HS_1}]{\big(\id_{\HS_1}-J^*J
      +J^*(\hat{J}-J) + (\hat{J}-J)^*J
      + (\hat{J}-J)^*(\hat{J}-J)  \big)R_1 }\\
    &\le \norm[\Lin{\HS_1}]{\big(\id_{\HS_1}-J^*J  \big)R_1 }
      + \norm[\Lin{\HS_1}]{J^*(\hat{J}-J)R_1 }\\
    & \hspace*{0.1\textwidth}{}
      + \norm[\Lin{\HS_1}]{(\hat{J}-J)^*J R_1 }
      + \norm[\Lin{\HS_1}]{(\hat{J}-J)^*(\hat{J}-J)R_1}\\
    &\le \delta + 2(1+\delta)\delta' \norm[\Lin{\HS_1}]{R_1}
      + (\delta')^2 \norm[\Lin{\HS_1}]{R_1},
  \end{align*}
  and similarly for
  $\norm[\Lin{\HS_2}]{(\id_{\HS_2}-\hat{J}\hat{J}^*){R_2}}$.  For the
  last inequality we can insert $J$ as well and obtain
  \begin{align*}
    \norm[\Lin{\HS_1, \HS_2}]{\hat J R_1-R_2 \hat J}
    &\le \norm[\Lin{\HS_2}]{(\hat J-J){R_1}}
      + \norm[\Lin{\HS_1 ,\HS_2}]{J R_1 -R_2 J}
      +\norm[\Lin{\HS_1}]{R_2(\hat J-J)}\\
    &\le \delta +\delta' \big(\norm[\Lin{\HS_1}] {R_1}
      +\norm[\Lin{\HS_2}]{R_2}\big).
  \end{align*}
  Therefore we can conclude the claim by choosing $\hat \delta$ as the
  maximum of those three terms.
\end{proof}
Finally, we show that if $\norm{J_n}>1$ then we can replace $J_n$ by
an equivalent identification operator with norm equal to $1$:
\begin{proposition}
  \label{prp:eq-que}
  Let $\D_1, \D_2$ be $\delta$-quasi-unitary equivalent with
  identification operator $J$.  If $\norm[\Lin{\HS_1,\HS_2}]{J} \ge 1$
  then $\D_1$ and $\D_2$ are $\hat \delta$-quasi-unitary equivalent
  with identification operator $\hat J$, where
  \begin{equation*}
    \hat J= \frac 1{\norm J} J
    \qquadtext{and}
    \hat \delta
    = \delta\bigl(
    1 + \max\{\norm{R_1},\norm{R_2}\}(2+3\delta)\bigr).
  \end{equation*}
\end{proposition}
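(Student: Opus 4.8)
The plan is to derive this directly from \Prp{eq_dann_que} by observing that $J$ and its normalisation $\hat J = J/\norm J$ are automatically $\delta$-equivalent in the sense of \Def{que-equivalence}. First I would compute the distance between the two identification operators explicitly: since $\norm J \ge 1$, we have
\begin{equation*}
  \norm[\Lin{\HS_1,\HS_2}]{J - \hat J}
  = \Bigl(1 - \tfrac{1}{\norm J}\Bigr)\norm J
  = \norm J - 1 .
\end{equation*}
The key observation is then that the first quasi-unitarity estimate~\eqref{eq:que.a} gives $\norm J \le 1 + \delta$, so that $\norm J - 1 \le \delta$. Hence $J$ and $\hat J$ are $\delta'$-equivalent with $\delta' := \norm J - 1 \le \delta$.

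Next I would feed this into \Prp{eq_dann_que}: since $\D_1$ and $\D_2$ are $\delta$-quasi-unitary equivalent with respect to $J$, and $J, \hat J$ are $\delta'$-equivalent, that proposition yields that $\D_1$ and $\D_2$ are quasi-unitary equivalent with respect to $(\hat J, \hat J^*)$ with error
\begin{equation*}
  \delta + \delta' \max\{\norm{R_1},\norm{R_2}\}\bigl(2(1+\delta) + \delta'\bigr).
\end{equation*}
Since this expression is monotonically increasing in $\delta' \ge 0$, substituting the bound $\delta' \le \delta$ gives
\begin{equation*}
  \delta + \delta \max\{\norm{R_1},\norm{R_2}\}\bigl(2(1+\delta) + \delta\bigr)
  = \delta\bigl(1 + \max\{\norm{R_1},\norm{R_2}\}(2 + 3\delta)\bigr)
  = \hat\delta ,
\end{equation*}
which is exactly the claimed bound. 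Because being quasi-unitary equivalent with a smaller error implies the same with the larger (claimed) error $\hat\delta$, the statement follows.

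There is no genuine obstacle here; the whole argument is an elementary substitution once one spots that rescaling moves $J$ by exactly $\norm J - 1$ and that this quantity is controlled by $\delta$ through~\eqref{eq:que.a}. I would only remark for completeness that $\norm{\hat J} = 1$, so the first estimate~\eqref{eq:que.a} holds trivially for $\hat J$ with any nonnegative error, and that \Prp{eq_dann_que} is precisely the tool guaranteeing that the remaining three estimates survive the rescaling with the controlled loss recorded in $\hat\delta$.
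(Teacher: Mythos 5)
Your proposal is correct and follows exactly the paper's own route: compute $\norm{J-\hat J}=\norm J-1\le\delta$ from~\eqref{eq:que.a}, then invoke \Prp{eq_dann_que} with $\delta'=\delta$ and simplify $\delta+\delta\max\{\norm{R_1},\norm{R_2}\}(2(1+\delta)+\delta)$ to the claimed $\hat\delta$. The added remark on monotonicity in $\delta'$ is a harmless refinement; nothing is missing.
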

\begin{proof}
  By \Prp{eq_dann_que} it is enough to show that $J$ and
  $\hat J$  are
  $\delta$-equivalent: We have
  \begin{equation*}
    \norm{J-\hat J}
    = \Bignorm{J-\frac{1}{\norm J}J}
    = \norm{J}-1
    \le \delta
  \end{equation*}
  using $\norm J \ge 1$.  In particular, from \Prp{eq_dann_que} with
  $\delta'=\delta$, we conclude the result.
\end{proof}

\begin{corollary}
  \label{cor:eq-que}
  Assume that $\D_n \gnrc \D_\infty$ with convergence speed
  $(\delta_n)_n$ and with identification operators $J_n$ not
  necessarily being contractions ($\norm{J_n}>1$ may happen).  Then
  $\D_n \gnrc \D_\infty$ with identification operators $\wt J_n$ with
  $\norm{\wt J_n} \le 1$ and convergence speed
  \begin{equation}
    \label{eq:conv.speed'}
    \hat \delta_n
    \le \delta_n\bigl(
    1 + (\norm{R_\infty} +2\delta_n)(2+3\delta_n)\bigr)
    \in \Err(\delta_n).
  \end{equation}
\end{corollary}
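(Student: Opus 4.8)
The plan is to apply \Prp{eq-que} separately to each pair $\D_n,\D_\infty$, fixing the index $n$ and setting $\D_1:=\D_n$, $\D_2:=\D_\infty$, $J:=J_n$, $\delta:=\delta_n$. I would split according to the size of $\norm{J_n}$. If $\norm{J_n}\le 1$ there is nothing to rescale: set $\wt J_n:=J_n$ and keep the speed $\delta_n$; since $J_n$ is then a contraction, \Lem{res.est} gives the resolvent bound $\norm{R_n}\le\norm{R_\infty}+2\delta_n$, and the asserted estimate holds trivially because $\delta_n\le\delta_n\bigl(1+(\norm{R_\infty}+2\delta_n)(2+3\delta_n)\bigr)$. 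If $\norm{J_n}>1$, I invoke \Prp{eq-que}, which replaces $J_n$ by the contraction $\wt J_n:=J_n/\norm{J_n}$ (so $\norm{\wt J_n}=1$) and shows that $\D_n$ and $\D_\infty$ are $\hat\delta_n$-quasi-unitary equivalent with
\[
  \hat\delta_n=\delta_n\bigl(1+\max\{\norm{R_n},\norm{R_\infty}\}(2+3\delta_n)\bigr).
\]

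The next step is to remove the dependence on $\norm{R_n}$ in this formula, which is where essentially all of the work sits. Here I would bound $\max\{\norm{R_n},\norm{R_\infty}\}$ by $\norm{R_\infty}+2\delta_n$ via \Lem{res.est}; substituting into the display yields exactly the claimed estimate~\eqref{eq:conv.speed'}, and since $\norm{R_\infty}$ is a fixed constant while $\delta_n\to 0$, the right-hand side is of order $\delta_n$, i.e.\ $\hat\delta_n\in\Err(\delta_n)$. Collecting the two cases, the operators $\wt J_n$ satisfy $\norm{\wt J_n}\le 1$ for every $n$ and yield $\D_n\gnrc\D_\infty$ in the sense of \Def{gnrc-que} with speed $\hat\delta_n$, which is the assertion.

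I expect the main obstacle to be precisely the resolvent bound on $\norm{R_n}$ in the case $\norm{J_n}>1$: \Lem{res.est} is stated for contraction identification operators, so it applies verbatim only once $\wt J_n$ is in hand, and running it on $\wt J_n$ gives $\norm{R_n}\le\norm{R_\infty}+2\hat\delta_n$ rather than the cleaner $\norm{R_\infty}+2\delta_n$, creating an apparent circularity with the formula for $\hat\delta_n$. I would resolve this by noting that $\norm{R_n}$ is intrinsic to $\D_n$ (it is the reciprocal of the distance from $z_0$ to the spectrum $\spec{\D_n}$) and independent of the identification operator: inspecting the proof of \Lem{res.est}, the hypothesis $\norm{J_n}\le 1$ enters only in the last two summands, and with $\norm{J_n}\le 1+\delta_n$ from~\eqref{eq:que1} together with~\eqref{eq:que2}--\eqref{eq:que3} one still obtains $\norm{R_n}\le\norm{R_\infty}+\Err(\delta_n)$, perturbing the constant only at order $\Err(\delta_n)$ and hence not affecting the conclusion $\hat\delta_n\in\Err(\delta_n)$; the clean constant $2\delta_n$ is exactly the contraction case. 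Finally, since rescaling by the positive scalar $1/\norm{J_n}$ commutes with taking adjoints, all estimates transfer unchanged to $R_n^*$ and $\wt J_n^*$, so the case $z_0\in\C\setminus\R$ required in \Def{que} is covered automatically.
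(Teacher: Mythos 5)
Your proof follows the paper's own argument exactly: apply \Prp{eq-que} with $\D_1=\D_n$, $\D_2=\D_\infty$ and bound $\norm{R_n}$ via \Lem{res.est}. Your additional observation that \Lem{res.est} is stated only for contractions, so that applying it to a non-contractive $J_n$ requires re-running its proof with $\norm{J_n}\le 1+\delta_n$ (which changes the bound only at order $\Err(\delta_n)$ and so does not affect the conclusion $\hat\delta_n\in\Err(\delta_n)$), is a legitimate point that the paper's two-line proof silently glosses over.
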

\begin{proof}
  We apply \Prp{eq-que} with $\D_1=\D_n$ and $\D_2=\D_\infty$.  From
  \Lem{res.est} we obtain $\norm{R_n} \le \norm{R_\infty}+2\delta_n$.
\end{proof}

We now have proven the last step of our main result using \Cor{eq-que}
and \Cor{main2a}:
\begin{theorem}[QUE-convergence implies Weidmann's convergence]
  \label{thm:main2}
  Let $\D_n$ be a self-adjoint operator in a Hilbert space $\HS_n$ for
  $n \in \Nbar$ such that $\D_n \gnrc \D_\infty$ QUE-converges with
  speed $(\delta_n)_n$.  Then $\D_n \gnrcW \D_\infty$ Weidmann-converges with
  speed
  \begin{equation}
    \hat \delta_n
    \le \wt \delta_n\bigl(
    1 + (\norm{R_\infty} +2\wt\delta_n)(2+3\wt \delta_n)\bigr)
    \in \Err(\sqrt{\delta_n}),
  \end{equation}
  where $\wt \delta_n \in \Err(\sqrt{\delta_n})$ is given
  in~\eqref{eq:conv.speed}.
\end{theorem}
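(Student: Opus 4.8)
The plan is to prove \Thm{main2} by assembling the two previously established corollaries, \Cor{eq-que} and \Cor{main2a}, into the full implication. All the substantive work has already been carried out in \Secs{compare}{gen.case}: the construction of a parent space from the defect operators (\Thm{dreamworld.exists}, \Def{parent.hs}) and the norm bound on $D_n$ via the three-term decomposition (\Thm{main2a}, estimates \eqref{eq:main2a.est1}--\eqref{eq:main2a.est2}). The single hypothesis still separating us from the conclusion is that \Cor{main2a} requires the identification operators to be contractions, $\norm{J_n}\le 1$, whereas \Def{gnrc-que} permits $\norm{J_n}>1$. The whole task is therefore to remove this hypothesis and then invoke \Cor{main2a}.

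First I would normalise the identification operators. Starting from $\D_n \gnrc \D_\infty$ with speed $(\delta_n)_n$ and possibly non-contractive $J_n$, \Cor{eq-que} applies directly: replacing $J_n$ by $\wt J_n := J_n/\max\{1,\norm{J_n}\}$ produces a family of \emph{contractions} for which $\D_n \gnrc \D_\infty$ still holds, now with a modified speed bounded as in \eqref{eq:conv.speed'} and hence lying in $\Err(\delta_n)$. The quantitative mechanism, supplied by \Prp{eq-que} together with \Lem{res.est}, is that scaling a single operator by the scalar $1/\norm{J_n}$ perturbs each of the QUE-estimates \eqref{eq:que1}--\eqref{eq:que3} only by a bounded multiplicative factor: the relevant resolvent norms stay bounded because $\norm{R_n}\le \norm{R_\infty}+2\delta_n$, and $\delta_n\to 0$. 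Thus the \emph{order} of the convergence speed is preserved by this step.

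Second, I would apply \Cor{main2a} to the contractions $\wt J_n$. This is exactly where \Thm{dreamworld.exists} furnishes the associated parent space $\HSgen$ of \Def{parent.hs} and \Thm{main2a} turns the QUE-estimates into the Weidmann bound on $\norm[\Lin\HSgen]{D_n}$; feeding in the intermediate speed $\wt\delta_n$ yields $\D_n \gnrcW \D_\infty$ with Weidmann speed given by substituting $\wt\delta_n$ into \eqref{eq:conv.speed}. The displayed closed form $\hat\delta_n \le \wt\delta_n(1+(\norm{R_\infty}+2\wt\delta_n)(2+3\wt\delta_n))$ then results from combining the constants of \Prp{eq-que} and \Thm{main2a}. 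Because \eqref{eq:conv.speed} introduces a square root, while the rescaling step contributed only a bounded factor, the composite rate lands in $\Err(\sqrt{\wt\delta_n}) = \Err(\sqrt{\delta_n})$, as claimed.

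I expect no serious obstacle here, since the construction and the hard estimates are already in hand; the delicate point is purely the order-of-magnitude bookkeeping across the two transformations. One must verify that the degradation from $\Err(\delta_n)$ to $\Err(\sqrt{\delta_n})$ originates solely in \Thm{main2a} — the price paid for the ``wrong-sided'' (co-)isometries flagged in \Rem{only_dreamworld}, where only a square-rooted norm equality is available for the second and third summands of $D_n$ — whereas \Cor{eq-que} leaves the rate of the same order. Checking that these two effects compose cleanly to the single final rate $\Err(\sqrt{\delta_n})$ is the only step requiring care.
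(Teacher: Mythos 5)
Your proposal is correct and follows exactly the paper's route: the paper proves \Thm{main2} precisely by first rescaling the identification operators via \Cor{eq-que} (so that $\norm{\wt J_n}\le 1$, at the cost of only a bounded multiplicative factor controlled through \Prp{eq-que} and \Lem{res.est}) and then invoking \Cor{main2a}, i.e.\ \Thm{dreamworld.exists} plus \Thm{main2a}, which is where the square-root loss enters. Your bookkeeping of the final rate $\Err(\sqrt{\delta_n})$ and its attribution to the ``wrong-sided'' (co-)isometry estimates of \Rem{only_dreamworld} matches the paper's argument.
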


\section{Examples}
\label{sec:examples}
%

In all examples one encounters convergence of Laplace-like operators
on varying spaces $X_n$ with limit space $X_\infty$, therefore we
assume here that $X_n$ is a measure space with measure $\mu_n$ for
each $n \in \Nbar$.  In particular, we have
\begin{equation*}
  \HS_n := \Lsqr{X_n}
  \qquad\text{for $n \in \Nbar=\N \cup \{\infty\}$.}
\end{equation*}

Note that Weidmann's convergence \emph{always} implies the
QUE-convergence with the same convergence speed by \Thm{main1} for
\emph{any} parent space and isometries factorising the identification
operators (not only the parent space associated with the
identification operators of \Def{parent.hs}).

\subsection{Examples with natural common parent space}
\label{ssec:ex.nat.parent.space}
For the first example class assume that there is a common measure
space $X$ with measure $\mu$, that $X_n \subset X$ is a measurable
subset and that $\mu_n$ is the corresponding restriction of $\mu$ to
$X_n$ for $n \in \Nbar$.  In particular, we can embed elements of
$\HS_n = \Lsqr{X_n}$ naturally into $\HS :=\Lsqr X$, by extending it by
$0$.  Denote this extension of an element $f_n \in \Lsqr{X_n}$ to
$\Lsqr X$ by $f_n \oplus 0_{X \setminus X_n}$.

\subsubsection*{Weidmann's convergence}
We then have the natural isometries
\begin{align*}
  \map{\iota_n&}{\HS_n}{\HS},
  & \iota_n f_n&=f_n \oplus 0_{X \setminus X_n}
  &(n \in \Nbar).
\end{align*}
Identifying $f_n$ with $\iota_n f_n$, we are actually in the situation
originally considered by Weidmann, namely that $\HS_n$ is a
\emph{subspace} of the parent space $\HS$.  The adjoint $\iota_n^*$ is
the restriction of an equivalent class of functions $f$ from $\Lsqr X$
to $X_n$, denoted by $\iota_n^* f= f\restr{X_n}$.  Moreover, the norm
of the lifted resolvent difference is given
in~\eqref{eq:norm.weid.ex}.

The corresponding orthogonal projections are
  \begin{align*}
    P_nf=\iota_n \iota_n^*f
        =f \restr{X_n} \oplus 0_{X \setminus X_n}
    \quadtext{and}
    P_\infty f=\iota_\infty \iota_\infty^*f
        =f\restr{X_\infty} \oplus 0_{X \setminus X_\infty}.
  \end{align*}
In particular, the projections can be written as $P_n=\1_{X_n,X}$ and
$P_\infty=\1_{X_\infty,X}$ using $\map{\1_{M,X}} X {\{0,1\}}$ for the
indicator function ($\1_{M,X}(x)=1$ if $x \in M \cap X$ and
$\1_{M,X}(x)=0$ if $x \in X \setminus M$) as well as for the
corresponding multiplication operator in $\Lsqr{X}$.  Moreover,
the projections commute as we have
\begin{equation}
  \label{eq:proj.comm.ex}
  P_nP_\infty
  =\1_{X_n \cap X_\infty, X}
  =P_\infty P_n
  \qquad\text{for all $n \in \N$.}
\end{equation}

Let us present another example with multiplication operators as in our
motivating example~\Ex{motivating.example1}
showing the possibility that the limit space suddenly shrinks:
\begin{example}[suddenly shrinking limit space, non-converging
  projections]
  \label{ex:weidmanns-excercise}
  Let
  \begin{equation*}
    X=X_n=[0,1/2] \cup [2,\infty)
    \qquadtext{and}
    X_\infty=[0,1/2]
  \end{equation*}
  with Lebesgue measure.  Moreover, let $A_n$ be the multiplication
  operator multiplying with the function $a_n(x)=x^n$ and
  $A_\infty=0$.  Here, Weidmann's generalised norm resolvent
  convergence holds, as
  \begin{align*}
    \bignormsqr[\Lsqr X]%
    {(\iota_n R_n \iota_n^* - \iota_\infty R_\infty \iota_\infty^*)f}
    &= \int_{X_\infty}\Bigabssqr{\Bigl(\frac1{1+x^n}-1\Bigr) f(x)} \dd x
    + \int_{X \setminus X_\infty}
    \Bigabssqr{\frac1{1+x^n} \cdot f(x)} \dd x\\
    &= \int_{[0,1/2]} \Bigabssqr{\frac{x^n}{1+x^n} \cdot f(x)} \dd x
    + \int_{[2,\infty)}
      \Bigabssqr{\frac1{1+x^n} \cdot f(x)} \dd x\\
    &\le \frac 1{(2^n)^2}
       \int_{[0,1/2] \cup [2,\infty)} \abssqr{f(x)} \dd x
      = \frac 1{(2^n)^2} \normsqr[\Lsqr X] f.
  \end{align*}
  Note that the limit space suddenly shrinks, and $P_n=\id_{\HS}$ does
  not converge to $P_\infty=\1_{[0,1/2]}$ in any topology
  (cf.\ \Remenum{proj.conv.strongly}{proj.conv.strongly.a}).
\end{example}

\subsubsection*{QUE-convergence}
The identification operators obtained from Weidmann's isometries are
  \begin{align*}
    \label{eq:j.ex}
    J_nf_n = \iota_\infty^*\iota_nf_n
        =(f_n \oplus 0_{X \setminus X_n})\restr{X_\infty}
    \quadtext{and}
    J_n^*f_\infty = \iota_n^*\iota_\infty f_\infty
        =(f_\infty \oplus 0_{X \setminus X_\infty}) \restr{X_n}.
  \end{align*}

From \Thms{main1}{main2b} together with~\eqref{eq:proj.comm.ex} we
conclude the following characterisation of norm resolvent convergence
in this case for multiplication operators.  Actually, a direct proof
follows easily from the motivating
examples~\Exs{motivating.example1}{motivating.example2} and especially
from~\eqref{eq:norm.weid.ex}:
\begin{proposition}
  \label{prp:eq.char.gnrc.mult}
  If $A_n$ is the multiplication operator with the measurable (not
  necessarily bounded) function $\map {a_n}{X_n}\R$ then the following
  are equivalent:
  \begin{enumerate}
  \item
    \label{eq.char.gnrc.mult.a}
    $A_n \gnrcW A_\infty$ with convergence speed of order $\delta_n \to 0$;

  \item
    \label{eq.char.gnrc.mult.b}
    $A_n \gnrc A_\infty$ with convergence speed of order $\delta'_n \to 0$;

  \item
    \label{eq.char.gnrc.mult.c}
    We have
    \begin{gather*}
      \norm[\Linfty {X_\infty \setminus X_n}]{(a_\infty-\im)^{-1}}
      \le \delta'_n, \qquad
      \norm[\Linfty {X_n \setminus X_\infty}]{(a_n-\im)^{-1}}
      \le \delta'_n
      \qquad\text{and}\\
      \norm[\Linfty {X_n \cap X_\infty}]{(a_n-\im)^{-1}-(a_\infty-\im)^{-1}}
      \le \delta'_n
    \end{gather*}
    and $\delta_n \to 0$.
  \end{enumerate}
  For
  $\itemref{eq.char.gnrc.mult.a} \implies
  (\itemref{eq.char.gnrc.mult.b} \vee \itemref{eq.char.gnrc.mult.c})$
  we can choose $\delta'_n=\delta_n$; for
  $(\itemref{eq.char.gnrc.mult.b} \vee \eqref{eq.char.gnrc.mult.c})
  \implies \itemref{eq.char.gnrc.mult.a}$ we can choose
  $\delta_n=3\delta'_n$.
\end{proposition}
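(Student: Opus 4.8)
The plan is first to fix the common resolvent element $z_0 = \im$. Since each $a_n$ is real-valued we have $|a_n(x)-\im|^2 = a_n(x)^2 + 1 \ge 1$, so $\im \in \Gamma = \bigcap_{n \in \Nbar}\rho(A_n)$, the resolvent $R_n = (A_n-\im)^{-1}$ is the multiplication operator by the bounded function $(a_n-\im)^{-1}$, and $\norm[\Lin{\HS_n}]{R_n} \le 1$ for all $n \in \Nbar$. Because $a_n$ and $a_\infty$ are real, $(a_n+\im)^{-1}-(a_\infty+\im)^{-1}$ is the pointwise complex conjugate of $(a_n-\im)^{-1}-(a_\infty-\im)^{-1}$ and $|(a_n+\im)^{-1}| = |(a_n-\im)^{-1}|$, so all estimates demanded for $\conj{z_0} = -\im$ coincide with those for $z_0 = \im$ and need no separate treatment. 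Throughout I use the elementary fact that a multiplication operator on an $\Lsqrspace$-space has operator norm equal to the essential supremum of the modulus of its multiplier, and that precomposing or postcomposing with the projection onto a measurable subset amounts to multiplying the multiplier by the corresponding indicator function.

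For the equivalence of \eqref{eq.char.gnrc.mult.a} and \eqref{eq.char.gnrc.mult.b} I would invoke the abstract machinery already established. By \Thm{main1}, \eqref{eq.char.gnrc.mult.a} implies \eqref{eq.char.gnrc.mult.b} with $\delta'_n = \delta_n$. Conversely, the natural isometries $\iota_n f_n = f_n \oplus 0_{X\setminus X_n}$ form a parent space factorising the identification operators $J_n$, and by \eqref{eq:proj.comm.ex} the associated projections $P_n = \1_{X_n,X}$ and $P_\infty = \1_{X_\infty,X}$ commute; hence \Thm{main2b} applies and yields \eqref{eq.char.gnrc.mult.b}$\implies$\eqref{eq.char.gnrc.mult.a} with $\delta_n = 3\delta'_n$. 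This already accounts for both constants stated in the proposition.

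It then remains to identify the operator-norm quantities of the two convergence notions with the three $\Linfty$-norms of \eqref{eq.char.gnrc.mult.c}. For the direct route to \eqref{eq.char.gnrc.mult.a} one reads off from \eqref{eq:norm.weid.ex} that $D_n = \iota_n R_n \iota_n^* - \iota_\infty R_\infty \iota_\infty^*$ is itself multiplication on $\HS = \Lsqr X$ by the function equal to $(a_n-\im)^{-1}-(a_\infty-\im)^{-1}$ on $X_n \cap X_\infty$, to $(a_n-\im)^{-1}$ on $X_n\setminus X_\infty$, to $-(a_\infty-\im)^{-1}$ on $X_\infty \setminus X_n$, and to $0$ elsewhere; as these regions are pairwise disjoint, $\norm[\Lin\HS]{D_n}$ is exactly the maximum of the three $\Linfty$-norms of \eqref{eq.char.gnrc.mult.c}. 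Thus \eqref{eq.char.gnrc.mult.a} holds with speed $\delta_n$ if and only if the three bounds of \eqref{eq.char.gnrc.mult.c} hold, and one may take $\delta'_n = \delta_n$. Equivalently, for the QUE description one reads from \eqref{eq:que.norm1}--\eqref{eq:que.norm3} that the operators $(\id_{\HS_n}-J_n^*J_n)R_n$, $(\id_{\HS_\infty}-J_nJ_n^*)R_\infty$ and $R_\infty J_n - J_n R_n$ are multiplications on the three regions by $(a_n-\im)^{-1}$, $(a_\infty-\im)^{-1}$ and $(a_n-\im)^{-1}-(a_\infty-\im)^{-1}$, whose norms are precisely the three suprema; note also that $J_n$ is a partial isometry, so \eqref{eq:que1} holds with error $0$.

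The computation is routine and I do not anticipate a genuine obstacle. The only points deserving care are (i) the verification that the best constant in each of the integral inequalities \eqref{eq:que.norm1}--\eqref{eq:que.norm3} is the corresponding essential supremum --- tested by functions concentrated where $|(a_n-\im)^{-1}|$, $|(a_\infty-\im)^{-1}|$ or $|(a_n-\im)^{-1}-(a_\infty-\im)^{-1}|$ approaches its essential supremum --- and (ii) the bookkeeping of the $\conj{z_0}$-estimates, which the choice $z_0 = \im$ in the first paragraph renders automatic. Once everything is recognised as a multiplication operator, the three conditions collapse to the same three $\Linfty$-bounds, and the constants $\delta'_n = \delta_n$ and $\delta_n = 3\delta'_n$ are exactly what \Thms{main1}{main2b} provide.
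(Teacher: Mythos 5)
Your proposal is correct and follows essentially the same route as the paper, which derives the equivalence of (a) and (b) from Theorems~\ref{thm:main1} and~\ref{thm:main2b} together with the commuting of the projections in~\eqref{eq:proj.comm.ex}, and identifies both convergence notions with the three $\Lsymb_\infty$-bounds of (c) by the direct multiplication-operator computation of~\eqref{eq:norm.weid.ex} and~\eqref{eq:que.norm1}--\eqref{eq:que.norm3}. Your additional observations --- that the choice $z_0=\im$ makes the $\conj{z_0}$-estimates automatic, and that the disjointness of the three regions even yields $\delta_n=\delta'_n$ rather than $3\delta'_n$ in the direct computation --- are correct refinements of what the paper leaves implicit.
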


\begin{remark}[abstract minimal parent space not always optimal]
  \label{rem:minimal.parent.space.not.opt}
  Let us now calculate the abstract (minimal) parent space associated
  with the identification operators $J_n$ (see \Def{parent.hs} and
  \Lem{hs.min}).  The defect operators are
  \begin{align*}
    W_n
    = (\id_{\HS_n}-J_n^* J_n)^{1/2}
    &  = \1_{X_n \setminus X_\infty,X_n}
      \colon \Lsqr{X_n} \to \Lsqr{X_n}
      \quad\text{and}\\
    W_{\infty,n}
    = (\id_{\HS_\infty}-J_n J_n^*)^{1/2}
    &  = \1_{X_\infty \setminus X_n,X_\infty}
      \colon \Lsqr{X_\infty} \to \Lsqr{X_\infty}.
  \end{align*}
  Since
  $\ran W_n = \1_{X_n \setminus X_\infty,X_n}(\Lsqr{X_n})=\Lsqr{X_n
    \setminus
    X_\infty}$, 
  the minimal parent space $\HSmin$ is
  \begin{equation}
    \label{eq:hsmin.ex}
    \HSmin
    = \Lsqr{X_\infty} \oplus
    \bigoplus_{n \in \N} \Lsqr{X_n \setminus X_\infty}
  \end{equation}
  by \Lem{hs.min}.  Moreover, the corresponding isometries
  $\map{\iota_{n,\min}}{\HS_n}\HSmin$ ($n \in \Nbar$) are
  \begin{equation}
    \label{eq:iso.ex}
    \iota_{n,\min} f_n
    =\bigl((f_n \oplus 0_{X\setminus X_n})\restr{X_\infty},0,\dots,0,
    f_n\restr{X_n \setminus X_\infty}
    0,\dots \bigr)
    \quadtext{and}
    \iota_{\infty,\min} f_\infty=(f_\infty,0,\dots).
  \end{equation}
  We can see here, that the abstract minimal parent space is not
  always as simple as the obvious choice $\HS=\Lsqr X$.

  Nevertheless, this abstract construction (including the artificially
  added sequence of spaces $\Lsqr{X_n \setminus X_\infty}$) leads to
  the strong convergence of the corresponding projections, i.e.,
  \begin{equation*}
    P_{n,\min} f
    = (\1_{X_n \cap X_\infty,X_\infty} f_\infty,0,\dots,
    \1_{X_n \setminus X_\infty,X_n} f_n,0,\dots)
    \to P_{\infty,\min} f = (f_\infty,0,\dots)
  \end{equation*}
  for all $f=(f_\infty,f_1,f_2,\dots) \in \HSmin$.
\end{remark}
\begin{example}[\Ex{weidmanns-excercise} revisted]
  \label{ex:weidmanns-excercise2}
  The QUE-convergence holds here with
  \begin{align*}
    \map{J_n}{\HS_n=\Lsqr{[0,1/2]\cup[2,\infty)}}
    {\HS_\infty=\Lsqr{[0,1/2]}}, \quad
    J_n f_n = f_n \restr{[0,1/2]}.
  \end{align*}
  Then $J_n^*f_\infty=f_\infty \oplus 0_{[2,\infty)}$,
  $J_n^*J_n= \1_{[0,1/2]}$ and $J_nJ_n^*=\id_{\HS_\infty}$ (for
  simplicity, we just write $\1_{[0,1/2]}$ instead of the above
  introduced $\1_{[0,1/2],X_\infty}$).  The defect operators are
  \begin{align*}
    W_n=\1_{[2,\infty)}
    \qquadtext{and}
    W_{\infty,n}
    =0.
  \end{align*}
  The minimal Hilbert space is here
  \begin{align*}
    \HSmin
    =\Lsqr{[0,1/2]} \oplus \bigoplus_{n \in \N} \Lsqr{[2,\infty]}.
  \end{align*}
  Here, the strong convergence $P_{n,\min} \strongto P_{\infty,\min}$ (cf.\
  \Prp{proj.conv.strongly}) holds, as we have
  \begin{equation*}
    P_{n,\min} f
    = (f_\infty,0,\dots,
    \1_{[2,\infty)} f_n,0,\dots)
    \to P_{\infty,\min} f = (f_\infty,0,\dots)
  \end{equation*}
  for all $f=(f_\infty,f_1,f_2,\dots) \in \HSmin$.  It does not hold in
  the natural parent space $\HS=\Lsqr{X}$, as we have seen in
  \Ex{weidmanns-excercise}.
\end{example}

\subsubsection*{The identification operator $J_n$ is an isometry.}
If $X_n \subset X_\infty$ holds for all $n \in \N$, then we can choose
\begin{equation*}
  \HSmin=\HS_\infty=\Lsqr {X_\infty}
\end{equation*}
as minimal parent space and $\map{\iota_n}{\HS_n}{\HS_\infty}$ as
$\iota_n f_n=f_n \oplus 0_{X_\infty \setminus X_n}$ for $n \in \N$ and
$\iota_\infty=\id_{\HS_\infty}$.  Moreover, Weidmann's generalised
resolvent convergence means that
\begin{equation}
  \label{eq:gnrc.weid.ex}
  J_n R_n J_n^* -R_\infty
  = \iota_n R_n \iota_n^* - R_\infty
  = D_n
  = R_n \oplus 0_{\Lsqr{X_\infty \setminus X_n}} - R_\infty
\end{equation}
converges in operator norm in $\Lsqr{X_\infty}$ to $0$.  Here,
$0_{\Lsqr{X_\infty \setminus X_n}}$ is the $0$-operator on
$\Lsqr{X_\infty \setminus X_n}$.

In the QUE-setting, $J_n=\iota_n$ is an isometry, i.e.,
$J_n^*J_n=\id_{\HS_n}$ and for the QUE-convergence we need only the
second estimate in~\eqref{eq:que2}.  It is equivalent with
\begin{equation}
  \label{eq:que2-ex}
  \normsqr[\Lsqr {X_\infty}]{(\id_{\HS_\infty}-J_n J_n^*)g}
  = \int_{X_\infty \setminus X_n} \abssqr {g} \dd \mu
  = \normsqr[\Lsqr{X_\infty \setminus X_n}] {g}
  \le \delta_n^2 \normsqr[\Lsqr {X_\infty}] {(A_\infty+1)g}
\end{equation}
for all $g \in \dom A_\infty$ (substitute $g=R_\infty f$).  Moreover,
the last estimate~\eqref{eq:que3} is equivalent with the operator norm
in~\eqref{eq:gnrc.weid.ex} being not greater than $\delta_n$.

\subsubsection*{Laplacians with obstacles}
We assume here that $X$ is an open subset of $\R^d$ or a Riemannian
manifold of bounded geometry, see~\cite[Sec.~3.2]{anne-post:21} for
details.  In previous works, the first author has established
QUE-convergence of Laplace-like operators.  We will show here that
equivalently, Weidmann's convergence also holds with the same
convergence speed.

In almost all examples where QUE-convergence has been established, one
shows the stronger estimate
\begin{equation}
  \label{eq:norm.que1.ex}
  \normsqr[\Lsqr{X \setminus X_n}] {g}
  \le \delta_n^2 \normsqr[\Lsqr X] {(A_\infty+1)^{1/2}g}
  = \delta_n^2 (\qf a_\infty(g)+\normsqr[\Lsqr X]{g})
\end{equation}
for all $g \in \dom \qf a_\infty$.  Here, $\qf a_\infty$ is the
quadratic form associated with $A_\infty$ (see
e.g.~\cite[Sec.~3.1]{post:12}).  The stronger estimate implies the
weaker one~\eqref{eq:que2-ex} since $\norm{(A_\infty+1)^{1/2}g} \le
\norm {(A_\infty+1)g}$ for all $g \in \dom A_\infty$.  If $A_\infty$
is the Laplace operator with Neumann or Dirichlet boundary conditions
on $X_\infty \subset \R^d$ then $\dom \qf a_\infty \subset \Sob
{X_\infty}$ and $\qf a_\infty(g)=\normsqr[\Lsqr
  {X_\infty}]{\abs{\nabla g}}$.  Moreover,~\eqref{eq:norm.que1.ex} is
equivalent with
\begin{equation*}
  \int_{X_\infty \setminus X_n} \abssqr{g} \dd \mu
  \le  \delta_n^2 \int_{X_\infty} \bigl(\abssqr{\nabla g}
  + \abssqr {g}\bigr) \dd \mu
\end{equation*}
for $g \in \Sob {X_\infty}$.  We named this property
\emph{$(X_\infty \setminus X_n,X_\infty)$-non-concentrating}
in~\cite[Def.~3.7]{anne-post:21}). 

\begin{examples}[Laplace operators and fading obstacles]
  \label{ex:dir.neu.holes}
  We first assume that $X=X_\infty$ and that
  $X_n= X_\infty \setminus B_n$ where $B_n \subset X$ are given by
  \begin{equation*}
    B_n = \bigcup_{p \in I_n} \clo B_{1/n}(p),
    \quadtext{where}
    \clo B_r(p):=\set{x \in X}{ d(x,p)\le r}
  \end{equation*}
  and $I_n$ is a discrete set such that for all $p,q \in I_n$ and
  $p \ne q$, we have $d(p,q) \ge 2/n^\alpha$ for some
  $\alpha \in (0,1)$.  Note that the union of balls is disjoint.  In
  both cases below, the limit operator $\D_\infty$ does not see the
  obstacles any more (hence we called them \emph{fading}).

  \begin{enumerate}
  \item \myparagraph{Neumann Laplacians on fading obstacles.}  Let
    $\qf a_n$ be the quadratic form with
    $\qf a_n(g)=\normsqr[\Lsqr {X_n}]{\abs{\nabla g}}$ and with domain
    $\dom \qf a_n=\Sob{X_n}$.  Then the associated operator $\D_n$ is
    (minus) the Laplacian on $X_n$ ($\D_n f_n= \nabla^* \nabla f_n$)
    with Neumann boundary conditions (the normal derivative of $f_n$
    on $\bd X_n$ vanishes) for $n \in \Nbar$.  We have shown
    in~\cite[Thm.~4.7]{anne-post:21} that $\D_n \gnrc \D_\infty$ with
    convergence speed $\delta_n\in \Err(1/n^{1-\alpha})$ if $d \ge 3$
    resp.\ $\delta_n \in \Err((\log n)^{1/2}/n^{1-\alpha})$ if $d=2$
    for any $\alpha \in (0,1)$.  We refer to~\cite{anne-post:21} for
    more details.
  \item \myparagraph{Dirichlet Laplacians on fading obstacles.}  Let
    $\qf a_n$ be the quadratic form as above with domain $\dom \qf
    a_n=\Sobn{X_n}$ (functions vanishing on $\bd B_n$).  Then the
    associated operator $\D_n$ is (minus) the Laplacian on $X_n$, but
    now with Dirichlet boundary conditions ($f_n$ vanishes on $\bd
    X_n$) for $n \in \Nbar$.  Assume here (for simplicity) that $d=3$.
    If $\alpha \in (0,1/3)$ then $\D_n \gnrc \D_\infty$ with
    convergence speed $\delta_n \in \Err(1/n^{(1/3-\alpha)/2})$
    (\cite[Cor.~5.7]{anne-post:21}).  The critical parameter
    $\alpha=1/3$ is the \emph{capacity}, see the discussion
    in~\cite[Rem.~5.8]{anne-post:21}), and
    also~\cite{khrabustovskyi-post:18} for a treatment of the case
    $\alpha=1/3$.
  \end{enumerate}
  From \Thms{main1}{main2b} we conclude that in both cases, the
  established QUE-convergence is equivalent with
  $\D_n\gnrcW \D_\infty$ (i.e., the resolvents $R_n$ of $\D_n$
  fulfil~\eqref{eq:gnrc.weid.ex}) with the same convergence speed.
\end{examples}

\subsubsection*{The identification operator $J_n$ is a co-isometry.}
Assume now that $X_\infty$ is eventually contained in $X_n$.  For
simplicity, we assume that $X_\infty \subset X_n$ for all
$n \in \N$.  In this case, obviously $J_n^*$ is an isometry, i.e.,
$J_n$ a co-isometry, and we have
\begin{equation*}
  J_nf_n=f_n \restr{X_\infty}
  \qquadtext{and}
  J_n^* f_\infty = f_\infty \oplus 0_{X_n \setminus X_\infty}
\end{equation*}
A parent space is either $X$, or the (possibly smaller) set
$\bigcup_{n \in \N} X_n \subset X$, or, using the abstract
construction, it is as in~\eqref{eq:hsmin.ex}.  We continue
\Ex{dir.neu.holes}:
\begin{figure}[h]
  \centering
  \begin{picture}(0,0)%
    \includegraphics{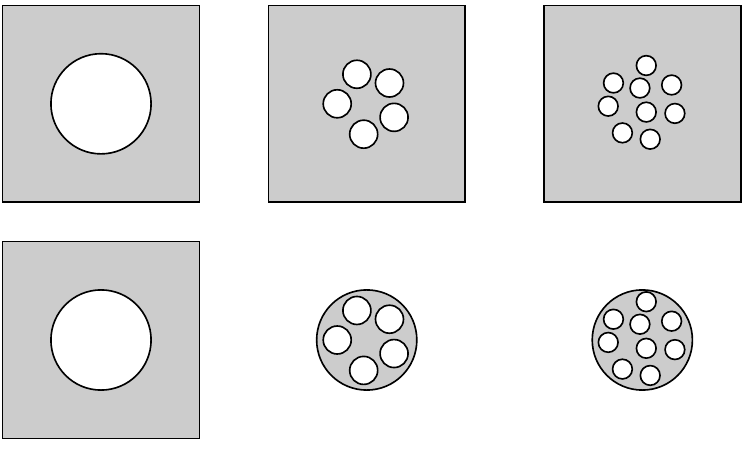}%
  \end{picture}%
  \setlength{\unitlength}{4144sp}%
  \begin{picture}(3399,2147)(439,-1746)
    \put(541,-446){\makebox(0,0)[lb]{$X_\infty$}}
    \put(1701,-446){\makebox(0,0)[lb]{$X_{n_1}$}}
    \put(2979,-446){\makebox(0,0)[lb]{$X_{n_2}$}}
    \put(507,-1582){\makebox(0,0)[lb]{$X_\infty$}}
    \put(1676,-1582){\makebox(0,0)[lb]{$X_{n_1}\setminus X_\infty$}}
    \put(3001,-1582){\makebox(0,0)[lb]{$X_{n_2}\setminus X_\infty$}}
  \end{picture}%

  \caption{The Dirichlet solidifying case.  \emph{Top row:} The
    parent space $X$ is the entire square, $X_\infty=X \setminus S$
    is the limit space, where $S$ is the centred ball.  Moreover,
    $X_{n_j}=X \setminus B_{n_j}$ are the approximating spaces for
    $n_1 < n_2$, where the obstacles $B_{n_j}$ (balls of radius
    $1/n_j$) are taken out.  \emph{Bottom row:} Three components of
    the abstract minimal parent space $\HSmin$, see
    \Rem{minimal.parent.space.not.opt}.}
  \label{fig:dir-sol}
\end{figure}
\begin{enumerate}
  \setcounter{enumi}{2}
\item \myparagraph{Dirichlet Laplacians on solidifying obstacles.}  As
  before, let $\D_n$ be (minus) the Laplacian on $X_n$ with Dirichlet
  boundary conditions for $n \in \Nbar$.  Assume here (again for
  simplicity) that $d=3$ (see \Fig{dir-sol} for $d=2$).  Moreover,
  assume that there is a compact subset $S \subset X$ with smooth
  boundary such that $B_n \subset S$ and that $S$ is included in the
  $1/n^\alpha$-neighbourhood of $I_n$.  Finally, assume that there is
  $N \in \N$ such that
  \begin{equation*}
    \card{\set{p \in I_n}
      {B_{1/n^\alpha}(p) \cap B_{1/n^\alpha}(q)\ne \emptyset}}
    \le N
  \end{equation*}
  for $\alpha \in (1/2,1)$.  The limit operator $\D_\infty$ is (minus)
  the Dirichlet Laplacian on $X \setminus S$.  We have shown
  in~\cite[Cor.~6.18]{anne-post:21} that $\D_n \gnrc \D_\infty$ with
  convergence speed $\delta_n \in \Err(1/n^{(1/3-\alpha)/4})$.  We can
  understand the result in the sense that the obstacles $B_n$
  ``solidify'' to the set $S$, leading to the Dirichlet condition ``on
  $S$''.  Unfortunately, the case $\alpha \in (1/3,1/2]$ closer to
  the critical case $\alpha=1/3$ cannot be treated by the results
  of~\cite{anne-post:21}.
\end{enumerate}

Again we conclude from \Thms{main1}{main2b} that the established
QUE-convergence is equivalent with $\D_n\gnrcW \D_\infty$ with the
same convergence speed.  Note that the parent space has to be larger
than the limit space $\Lsqr{X_\infty}$ here as $X_\infty \subset
X_n$. Nevertheless, there are natural candidates for a parent space
such as $\HS=\Lsqr{X}$.

\subsection{No common natural parent space I: Graph-like spaces
  converging to metric graphs, a simplified model}
\label{ssec:met.graphs}
The remaining examples we present here are cases, where the
approximating and limit space are of different nature; namely in the
limit there is a change in dimension.  We begin with the QUE-setting
and then construct the abstract minimal parent space, which is not
naturally given.

The next example was actually the first example for which the concept
of QUE-convergence was established (cf.~\cite{post:06}).

\subsubsection*{A metric graph as the limit space}
Let $X_\infty$ be a (for simplicity) compact metric graph, i.e., a
topological graph where each edge $e \in E$ is assigned a length
$\ell_e \in (0,\infty)$.  Moreover, $X_\infty$ carries a natural
measure: the Lebesgue measure on each line segment.  We decompose the
metric graph into its line segments $X_{\infty,e} \cong [0,\ell_e]$,
i.e., we have
\begin{equation*}
  X_\infty = \bigcup_{e \in E}  X_{\infty,e}
\end{equation*}
(see \Fig{emb-gl-space} left).  We use $x \in [0,\ell_e]$ also as
coordinate of $X_{\infty,e}$, suppressing the formally necessary
isometry $\map{\phi_{\infty,e}}{X_{\infty,e}}{[0,\ell_e]}$ in the
notation.  Moreover, we refer to the coordinate $x$ as the
\emph{longitudinal} direction.

\subsubsection*{A graph-like space converging to a metric graph}
For simplicity, we assume that $X_\infty$ is embedded in $\R^d$ and
that the edges are straight line segments in $X_\infty$.  Let now
$X_n$ be the $1/n$-neighbourhood of $X_\infty$ in $\R^d$ together with
its natural Lebesgue measure (see \Fig{emb-gl-space}).  In particular,
there is a number $a \in (0,\infty)$ such that we can decompose $X_n$
into so-called \emph{shortened edge neighbourhoods} $X_{n,e}$ and
\emph{vertex neighbourhoods} $X_{n,v}$.  Each shortened edge
neighbourhood $X_{n,e}$ is isometric with
$[a/n,\ell_e-a/n] \times B_n$, where
$B_n=\set{y \in \R^{d-1}}{\abs y \le 1/n}$ is called the
\emph{transversal} direction.  Moreover, the ``area'' $\abs {B_n}$ of
$B_n$ is $\abs{B_n}=\abs{B_1}n^{-(d-1)}$.  Each vertex neighbourhood
$X_{n,v}$ is $(1/n)$-homothetic with a building block $X_{1,v}$.

\subsubsection*{The ``bumpy'' graph-like space as simplification}
In order to simplify the arguments, we consider the \emph{(full) edge
  neighbourhood} $\check X_{n,e}$ isometric with $[0,\ell_e] \times B_n$
instead of the shortened edge neighbourhood $X_{n,e}$.  We consider
the resulting space $\check X_n$ as being glued together from the
so-called \emph{vertex neighbourhoods} $X_{n,v}$ ($v \in V$) and the
full edge neighbourhoods $\check X_{n,e}$ ($e \in E$) in such a way that
\begin{equation*}
  \check X_n = \bigcup_{v \in V} X_{n,v}
  \cup \bigcup_{e \in E}  \check X_{n,e}
\end{equation*}
(see \Fig{bumpy-gl-space}).  Moreover, we assume that
$X_{n,v}\cap \check X_{n,e} =\emptyset$ if $e$ and $v$ are not
incident and that $X_{n,v} \cap \check X_{n,e}$ is $(d-1)$-dimensional
and isometric with $B_n$ if $e$ and $v$ are incident.
\begin{figure}[h]
  \centering
  \begin{picture}(0,0)
    \includegraphics[scale=0.8]{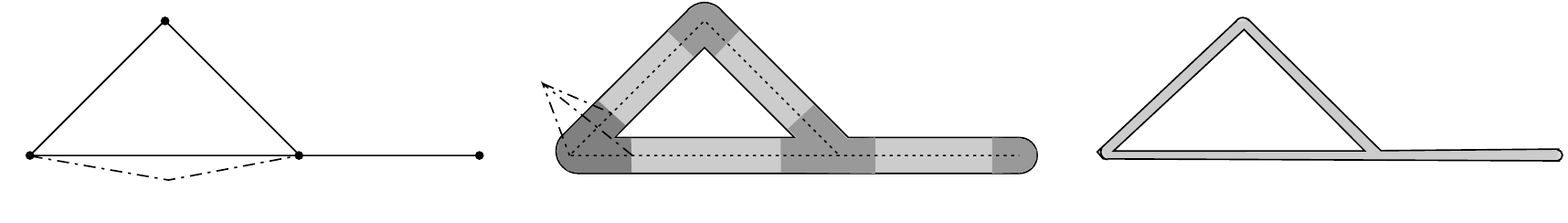}%
  \end{picture}%
  \setlength{\unitlength}{3315sp}
  \begin{picture}(7827,1108)(301,-395)
    \put(296,-381){$X_\infty$}
    \put(316, 24){$v$}
    \put(991, 24){$X_{\infty,e}$}
    \put(2881,400){$a/n$}
    \put(2581,104){$X_{n,v}$}
    \put(3691,-381){$X_{n,e}$}
    \put(991,-381){$\ell_e$}
    \put(2781,-381){$X_{n}$}
    \put(5851,-381){$X_{n'}$}
  \end{picture}%
  \caption{\emph{Left:} A metric graph $X_\infty$ with four edges and
    four vertices.  The edge $e$ is identified with the interval
    $X_{\infty,e}=[0,\ell_e]$ (fat). \emph{Middle:}
    The embedded graph-like space as subset of $\R^2$ ($d=2$),   the
    edge neighbourhood $X_{n,e}$ (light grey) is isometric with
    $[0,\ell_e-2a/n] \times [-1/n,1/n]$, and the vertex
    neighbourhood $X_{n,v}$ (dark grey) is $(1/n)$-homothetic with
    a fixed set  $X_{1,v}$.  \emph{Right:} An embedded graph-like
    space $X_{n'}$ for $n'>n$.}
   \label{fig:emb-gl-space}
\end{figure}

\begin{figure}[h]
  \centering
  \begin{picture}(0,0)%
    \includegraphics{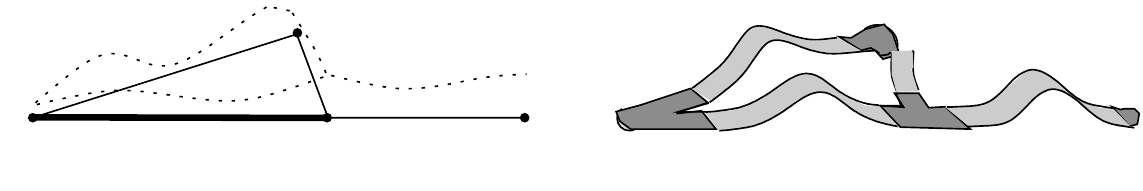}%
  \end{picture}%
  \setlength{\unitlength}{4144sp}%
  \begin{picture}(5221,859)(301,-2240)
    \put(316,-1771){\makebox(0,0)[lb]{$v$}}
    \put(991,-2261){\makebox(0,0)[lb]{$X_{\infty,e}$}}
    \put(296,-2276){\makebox(0,0)[lb]{$X_\infty$}}
    \put(2881,-1781){\makebox(0,0)[lb]{$X_{n,v}$}}
    \put(3691,-2276){\makebox(0,0)[lb]{$\check X_{n,e}$}}
    \put(2961,-2276){\makebox(0,0)[lb]{$\check X_n$}}
  \end{picture}%
  \caption{\emph{Left:} A metric graph $X_\infty$, the dotted lines
    represent a function $f_\infty$ on $X_\infty$.  \emph{Right:} A
    bumpy graph-like space.  Here, the edge neighbourhood $\check
    X_{n,e}$ is isometric with $[0,\ell_e] \times [-1/n,1/n]$ (for
    $d=2$), and the vertex neighbourhood $X_{n,v}$ is the same as in
    the embedded case..}
  \label{fig:bumpy-gl-space}
\end{figure}
If the original graph $X_\infty$ is embedded in $\R^2$, one might
think of $\check X_n$ as a paper model, where the edge neighbourhoods
have a ``bump'' in direction of a third dimension, in order to have
space for the extra longitudinal length (see \Fig{bumpy-gl-space}).
Therefore, we call $\check X_n$ the \emph{bumpy graph-like space}.  We
will treat the original space $X_n$ with the slightly shortened edge
neighbourhoods $X_{n,e}$ as a perturbation of the bumpy graph-like
space $\check X_n$ in \Subsec{met.graphs2}, see
also~\cite[Prp.~5.3.7]{post:12}.

As the components of the decomposition are disjoint up to sets of ($d$-dimensional)
measure $0$, we have
\begin{align*}
  \check \HS_n
  := \Lsqr{\check X_n}
  =\bigoplus_{v \in V} \Lsqr{X_{n,v}} \oplus \bigoplus_{e \in E} \Lsqr{\check X_{n,e}}
  \quadtext{and}
  \HS_\infty
  := \Lsqr{X_\infty}
  =\bigoplus_{e \in E} \Lsqr{X_{\infty,e}}.
\end{align*}
According to this decomposition, we write
\begin{equation*}
  f_{n,v} := f_n \restr{X_{n,v}}, \quad
  f_{n,e} := f_n \restr{\check X_{n,e}}  \quadtext{and}
  f_{\infty,e} := f_\infty \restr{X_{\infty,e}}.
\end{equation*}
for $f_n \in \check \HS_n$ and $f_\infty \in \HS_\infty$ and use the notation
$(\cdot)_{n,v}$, $(\cdot)_{n,e}$ and $(\cdot)_{\infty,e}$ as the
corresponding restriction operators, respectively.

\subsubsection*{The QUE-setting and the identification operators}
We now define the identification operators $\check J_n$ as follows:
\begin{align*}
  \map{\check J_n}{\check \HS_n=\Lsqr{\check X_n}}
  {\HS_\infty=\Lsqr{X_\infty}}, \qquad 
  (\check J_n f_n)_{\infty,e}(x)=\abs{B_n}^{-1/2} \int_{B_n}
  f_{n,e}(x,y)\dd y.
\end{align*}
Note that $(\check J_n f_n)_{\infty,e}$ is (up to a scaling) the
transversal average of $f_{n,e}$.  A straightforward calculation gives
the adjoint $\check J_n^* g_\infty$ on the components as
\begin{equation*}
  (\check J_n^*g_\infty)_{n,v}=0 \qquadtext{and}
  (\check J_n^*g_\infty)_{n,e}(x,y)
  =  \abs{B_n}^{-1/2} g_\infty(x).
\end{equation*}
Moreover, it is easily seen that $\check J_n^*$ is an isometry.

It was shown in~\cite{post:06} (see also~\cite{post:12}) that (minus)
the Neumann Laplacian on $\check X_n$ denoted here by $\check \D_n$
QUE-converges to (minus) the standard Laplacian $\D_\infty$ on
$X_\infty$, i.e., $\check \D_n \gnrc \D_\infty$ with convergence speed
$\delta_n \in \Err(n^{-1/2})$.  The standard Laplacian in $X_\infty$
is the operator acting as $(\D f_\infty)_e=-f_e''$ with functions
continuous at a vertex and $\sum_{e \sim v} f_e'(v)=0$, where
$f_e'(v)$ denotes the derivative along $X_{\infty,e}$ towards the
point in $X_\infty$ corresponding to $v$.
\subsubsection*{Weidman's setting: the corresponding defect operators,
  the minimal parent space and its isometries}
As $\check J_n^*$ is an isometry, its defect operator
fulfils
$\check W_{\infty,n}=(\id_{\HS_\infty}-\check J_n\check
J_n^*)^{1/2}=0$.
It follows that
$\check W_n=(\id_{\check \HS_n}-\check J_n^*\check J_n)^{1/2}$ is an
orthogonal projection onto $\ker \check J_n$ (see
\Thmenum{j.proj.com}{j.proj.com.b}); hence we can leave out the square
root.  In particular,
\begin{equation}
  \label{eq:gl-space.wn}
  \check W_n
  = \id_{\check \HS_n}-\check J_n^*\check J_n
  =\bigoplus_{v \in V} \id_{\Lsqr{X_{v,n}}} \oplus
  \bigoplus_{e \in E} \check P_{n,e}^\perp,
\end{equation}
where
\begin{align*}
  (\check P_{n,e}^\perp f_{n,e}(x,y)
  = f_{n,e}(x,y)- \frac 1{\abs{B_n}} \int_{B_n} f_{n,e}(x,y') \dd y'
\end{align*}
is the projection onto the orthogonal complement of
$\Lsqr{[0,\ell_e]}\otimes \C \1_{B_n}$ in
$\Lsqr{\check X_{n,e}} \cong \Lsqr{[0,\ell_e]}\otimes \Lsqr{B_n}$;
i.e., the projection onto the space of functions with transversal
average $0$.  Moreover,
\begin{equation*}
  \check W_n(\check \HS_n)
  = \ker \check J_n
  \cong \bigoplus_{v \in V} \Lsqr{X_{n,v}} \oplus
  \bigoplus_{e \in E} \Lsqr{[0,\ell_e]}
  \otimes (\Lsqr{B_n} \ominus \C \1_{B_n})
\end{equation*}
is a closed subspace of $\check \HS_n$.  In particular, we have
\begin{equation*}
  \check \HSmin = \Lsqr{X_\infty}
   \oplus \bigoplus_{n \in \N}
   \Bigl(\bigoplus_{v \in V} \Lsqr{X_{n,v}} \oplus
   \bigoplus_{e \in E} \Lsqr{[0,\ell_e]}
   \otimes (\Lsqr{B_n} \ominus \C \1_{B_n})
  \Bigr)
\end{equation*}
\begin{figure}[h]
  \centering
  \begin{picture}(0,0)%
    \includegraphics[scale=0.8]{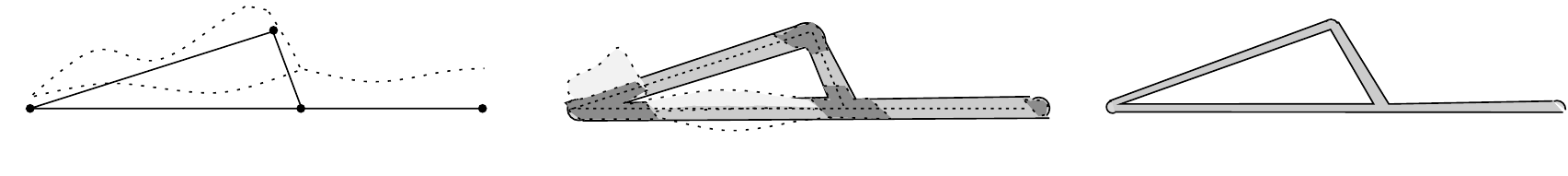}%
  \end{picture}%
  \setlength{\unitlength}{3315sp}%
  \begin{picture}(7792,859)(301,-2240)
    \put(316,-1771){$v$}%
    \put(991,-2276){$f_{\infty,e}$}%
    \put(496,-2276){$X_\infty$}%
    \put(2881,-1681){$f_{n,v}$}%
    \put(3691,-2276){$f_{n,e}$}%
    \put(3061,-2276){$\check X_n$}%
    \put(5851,-2276){$\check X_{n'}$}%
  \end{picture}%
  \caption{Three components of the minimal parent space (for
    simplicity not drawn as bumpy space): left the space
    $\Lsqr{X_\infty}$, in the middle the $n$-th component (only the
    functions on the vertex neighbourhoods $X_{n,v}$ are needed and
    the functions on $\check X_{n,e}$ with transversal average $0$ on
    $\check X_{n,e}$).  On the right, the underlying space of the
    $n'$-th component for some $n'>n$.}
  \label{fig:gl-min.ps}
\end{figure}
as abstract minimal parent space (see \Lem{hs.min}) with isometries
$\map{\check \iota_\infty}{\Lsqr{X_\infty}}{\check\HSmin}$ given by
$\check \iota_\infty f_\infty=(f_\infty,0,\dots)$ and $\map{\check
  \iota_n}{\Lsqr{\check X_n}}{\check \HSmin}$ acting as
\begin{equation*}
  \check \iota_n f_n
  =\bigr(\check J_n f_n,0,\dots,0,
  (f_{n,v})_{v \in V} \oplus (\check P_{n,e}^\perp f_{n,e})_{e \in E)},0,
  \dots \bigr)
\end{equation*}
for $f_n \in \check \HS_n$ (see \Fig{gl-min.ps}).  Note that $\check
\iota_n f_n$ here is \emph{not} positivity-preserving, as for any
function $f_{n,e}(x,y)=g_e(x) h_n(y)$ we have
\begin{align*}
  P_{n,e}^\perp f_{n,e}(x,y)=g_e(x)\Bigl(h_n(y)-
  \frac 1{\abs{B_n}} \int_{B_n} h_n(y')\dd y'\Bigr).
\end{align*}
If $g_e \ge 0$ and if $h_n \ge 0$ is not constant, then $f_{n,e}\ge
0$, but $\check P_{n,e}^\perp f_{n,e}$ must change sign for different
$y$ (and $x$ with $g_e(x)>0$).  Nevertheless, note that $\check
J_n=\check \iota_\infty^*\check \iota_n$ is positivity preserving.

The projections
$\check P_\infty=\check \iota_\infty \check \iota_\infty^*$ and
$\check P_n=\check \iota_n \check \iota_n^*$ are given by
\begin{equation*}
  \check P_\infty f=(f_\infty,0,\dots)
  \qquadtext{and}
  \check P_n f
  =\bigr(f_\infty,0,\dots,0,
  (f_{n,v})_{v \in V} \oplus (\check P_{n,e}^\perp f_{n,e})_{e \in E)},0,
  \dots \bigr)
\end{equation*}
for $f=(f_\infty,f_1,f_2,\dots) \in \HSgen$.

As the projections commute ($J_n^*$ is an isometry), the convergence
speed is preserved: denote by $\delta_n$ the convergence speed of the
QUE-convergence $\D_n \gnrc \D_\infty$ shown e.g.\ in
in~\cite{post:06,post:12}.  We then conclude from
\Cor{proj.comm.jn.co-iso}:
\begin{corollary}[Weidmann's convergence for graph-like spaces]
  We have $\check \D_n \gnrcW \D_\infty$ with convergence speed
  $2\delta_n \in \Err(n^{-1/2})$.
\end{corollary}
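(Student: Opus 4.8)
The plan is to read off the claim directly from \Cor{proj.comm.jn.co-iso}, whose hypotheses the preceding construction has been tailored to meet. First I would take as given the input from~\cite{post:06,post:12}, namely that $\check \D_n \gnrc \D_\infty$ QUE-converges with convergence speed $\delta_n \in \Err(n^{-1/2})$; this supplies the QUE-convergence required by the corollary.

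Next I would confirm the co-isometry hypothesis on the identification operators. The adjoint $\check J_n^*$ was computed on the vertex and edge components above and seen to be an isometry, so $\check J_n$ is a co-isometry by definition; equivalently, the vanishing of the defect operator $\check W_{\infty,n}=(\id_{\HS_\infty}-\check J_n\check J_n^*)^{1/2}=0$ characterises $\check J_n$ as a co-isometry via \Cor{j.proj.com}. Since a co-isometry is in particular a contraction ($\norm{\check J_n}\le 1$), \Thm{dreamworld.exists} secures the existence of a parent space factorising $(\check J_n)_n$; concretely this is the minimal parent space $\check \HSmin$ with the isometries $\check \iota_n$ exhibited above.

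With all hypotheses verified, I would invoke \Cor{proj.comm.jn.co-iso} to obtain $\check \D_n \gnrcW \D_\infty$ with convergence speed $2\delta_n$; and since $\delta_n \in \Err(n^{-1/2})$, the order $\Err(n^{-1/2})$ is inherited by $2\delta_n$.

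I expect no genuine analytic difficulty, since the substantive work---the QUE-estimates of~\cite{post:06,post:12} and the explicit parent-space construction---is already in place. The one point needing care is the co-isometry property of $\check J_n$ (equivalently $\check W_{\infty,n}=0$): it is precisely this feature that lets one bypass the $\delta_n^{1/2}$-loss of the general bound in \Thm{main2a} and retain the clean factor $2$ in the convergence speed.
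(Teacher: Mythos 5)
Your proposal is correct and follows essentially the same route as the paper: the text preceding the corollary establishes that $\check J_n^*$ is an isometry (equivalently $\check W_{\infty,n}=0$, so the projections commute), constructs the minimal parent space explicitly, and then invokes \Cor{proj.comm.jn.co-iso} together with the QUE-convergence speed $\delta_n \in \Err(n^{-1/2})$ from~\cite{post:06,post:12} to obtain the factor $2\delta_n$. Your identification of the co-isometry property as the point that avoids the $\delta_n^{1/2}$-loss of \Thm{main2a} matches the paper's reasoning exactly.
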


\subsection{No common natural parent space II: Graph-like spaces
  converging to metric graphs, the original model}
\label{ssec:met.graphs2}

Let us now show an example where we need to rescale the identification
operators.  We keep the notation from the previous subsection.
\subsubsection*{The original (embedded) graph-like space}
Let us now consider the original graph-like space $X_n$ with the
slightly shortened edge neighbourhoods $X_{n,e}$ isometric with
$[a/n,\ell_e-a/n]\times B_n$ (see \Fig{emb-gl-space}).  We only
present the differences with the abstract model in
\Subsec{met.graphs}: we set $\HS_n:=\Lsqr{X_n}$ and
\begin{gather*}
  \map{J_n}{\HS_n=\Lsqr{X_n}}{\HS_\infty=\Lsqr{X_\infty}}, \qquad
  (J_n f_n)_{\infty,e}(x)
  =\abs{B_n}^{-1/2} \int_{B_n} f_{n,e}(\Phi_{n,e}(x),y)\dd y,
\end{gather*}
where
\begin{equation*}
  \map{\Phi_{n,e}}{[0,\ell_e]}{\Bigl[\frac an,\ell_e-\frac an\Bigr]},
  \qquad
  \Phi_{n,e}(x)=\frac an +\Bigl(1-\frac {2a}{n\ell_e}\Bigr) x.
\end{equation*}
Again, an easy computation shows that
\begin{equation*}
  (J_n^*g_\infty)_{n,v}=0 \qquadtext{and}
  (J_n^*g_\infty)_{n,e}(\hat x,y)
  =  \abs{B_n}^{-1/2} \Bigl(1-\frac {2a}{n\ell_e}\Bigr)^{-1}
  g_\infty(\Phi_{n,e}^{-1}(\hat x)).
\end{equation*}
Moreover, we have here
\begin{align*}
  \normsqr[\HS_n]{J_n^* g_\infty}
  &= \frac1{\abs{B_n}} \sum_{e \in E}
    \Bigl(1-\frac {2a}{n\ell_e}\Bigr)^{-2}
  \int_{X_{n,e}} \abssqr{g_{\infty,e}(\Phi_{n,e}^{-1}(\hat x))}
  \dd y \dd \hat x\\
  &= \sum_{e \in E} \Bigl(1-\frac {2a}{n\ell_e}\Bigr)^{-1}
  \int_0^{\ell_e}  \abssqr{g_{\infty,e}(x)} \dd x
  =\normsqr[\HS_\infty]{K_{\infty,n} g_\infty},
\end{align*}
where $K_{\infty,n}$ is the operator multiplying with
\begin{equation*}
  \kappa_{n,e}
  = \Bigl(1-\frac {2a}{n\ell_e}\Bigr)^{-1/2}>1.
\end{equation*}
In particular,
$\norm{J_n}=\norm{J_n^*}
=\norm{K_{\infty,n}}=(1-2a/(n\ell_0))^{-1/2}>1$,
where $\ell_0$ is the maximum of all edge lengths $\ell_e$.

\subsubsection*{QUE- convergence in the embedded case: a
  change of identification operators}
It was again shown in~\cite{post:06} (see also~\cite{post:12}) that
$\D_n$ ((minus) the Neumann Laplacian on $X_n$)
QUE-converges to (minus) the standard Laplacian $\D_\infty$ on the
metric graph $X_\infty$, i.e., that $\D_n \gnrc \D_\infty$ with speed
$\delta_n \in \Err(n^{-1/2})$.

As here $\norm{J_n}>1$, we have to change the identification
operator as in \Subsec{change.id} to
\begin{equation*}
  \hat J_n
  := \frac 1{\norm{J_n}} J_n
  = \Bigl(1-\frac {2a}{n\ell_0}\Bigr)^{1/2} J_n.
\end{equation*}
From \Cor{eq-que} we conclude that $\D_n \gnrc \D_\infty$ with
identification operators $(\hat J_n)_n$ and convergence speed still of
order in $\Err(n^{-1/2})$.

\subsubsection*{Weidmann's convergence  in the embedded case: a
  change of identification operators}
Let us finally comment on Weidmann's convergence: we have
\begin{equation*}
  (\hat J_n \hat J_n^* f_\infty)_e(x)
  =  \Bigl(1-\frac {2a}{n\ell_0}\Bigr)
  \Bigl(1-\frac {2a}{n\ell_e}\Bigr)^{-1}
  f_{\infty,e}(x),
\end{equation*}
hence $J_n^*$ is only an isometry if $\ell_e=\ell_0$ for all $e \in E$
(i.e., if $X_\infty$ is an \emph{equilateral} metric graph).  In
particular, the defect operator associated with $\hat J_n^*$ is given
by
\begin{equation*}
  (\hat W_{\infty,n} f_\infty)_e
  =  \hat w_{n,e} \cdot f_{\infty,e},
  \quadtext{where}
  \hat w_{n,e}
  :=\Big(1-\frac{1-2a/(n\ell_0)}{1-2a/(n\ell_e)} \Big)^{1/2}
  \in \Err\Bigl(\frac1{n^{1/2}}\Bigr),
\end{equation*}
and we have $\norm[\Lin{\HS_\infty}]{\hat W_{\infty,n}} \to 0$ as $n
\to \infty$.  For the defect operator associated with $\hat J_n$, we
have
\begin{equation*}
  \hat W_n = (\id_{\HS_n}-\hat J_n^*\hat J_n)^{1/2}
  =\bigoplus_{v \in V} \id_{\Lsqr{X_{n,v}}} \oplus
  \bigoplus_{e \in E}
  (\hat w_{n,e}
  P_{n,e} \oplus P_{n,e}^\perp)\Bigr),
\end{equation*}
where $P_{n,e}$ is the orthogonal projection onto
$\Lsqr{[a/n,\ell_e-a/n]} \otimes \C \1_{B_n} \subset \Lsqr{X_{n,e}}$
and $P_{n,e}^\perp$ its complement.  The abstract minimal parent space
as constructed in \Subsec{concrete.iso} is given by
\begin{equation*}
  \hat \HSmin = \Lsqr{X_\infty}
   \oplus \bigoplus_{n \in \N}
   \Bigl(\bigoplus_{v \in V} \Lsqr{X_{n,v}} \oplus
   \bigoplus_{e \in E} \Lsqr{X_{n,e}}
  \Bigr)
\end{equation*}
with isometries
$\map{\hat \iota_\infty}{\Lsqr{X_\infty}}{\hat\HSmin}$ given by
$\hat \iota_\infty f_\infty=(f_\infty,0,\dots)$ and
$\map{\hat \iota_n}{\Lsqr{\hat X_n}}{\hat \HSmin}$ acting as
\begin{equation*}
  \hat \iota_n f_n
  =\Bigr(\hat J_n f_n,0,\dots,0,
  (f_{n,v})_{v \in V}
  \oplus \bigl((\hat w_{n,e}P_{n,e} \oplus \hat P_{n,e}^\perp) f_{n,e}
  \bigr)_{e \in E},0,
  \dots \Bigr)
\end{equation*}
for $f_n \in \hat \HS_n$.  Again, $\hat \iota_n f_n$ is not
positivity-preserving (but $\hat J_n$ is).  The corresponding
orthogonal projections $\hat P_n$ and $\hat P_\infty$ of the abstract
parent space fulfil
\begin{align*}
  (\hat P_n\hat P_\infty-\hat P_\infty \hat P_n)f
  &=\bigl(
    -\hat W_{\infty,n} \hat J_n f_n,
    0, \dots, 0,
    \hat J_n^* \hat W_{\infty,n} f_\infty,
    0, \dots
    \bigr),
\end{align*}
i.e., they do not commute.  In particular, we conclude from
\Thm{main2} that the convergence speed in Weidmann's convergence will
be slower:
\begin{corollary}[Weidmann's convergence for embedded graph-like
  spaces]
  We have $\check \D_n \gnrcW \D_\infty$ with convergence speed
  of order $\sqrt{n^{-1/4}}$.
\end{corollary}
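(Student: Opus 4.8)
The plan is to read the statement off the abstract machinery of \Subsec{change.id}, so that no new estimate on the graph-like space itself is required beyond the QUE-convergence already recorded. The starting point is $\D_n \gnrc \D_\infty$ with speed $\delta_n \in \Err(n^{-1/2})$ (from \cite{post:06,post:12}) for the embedded Neumann Laplacian $\D_n$ with the identification operators $J_n$ introduced above. Since $\norm{J_n} = (1-2a/(n\ell_0))^{-1/2} > 1$, these $J_n$ are \emph{not} contractions, so the parent-space construction of \Subsec{concrete.iso} does not apply to them directly. First I would replace $J_n$ by the rescaled operators $\hat J_n := J_n/\norm{J_n}$ and invoke \Cor{eq-que}: this yields QUE-convergence with contractive identification operators ($\norm{\hat J_n}=1$) and convergence speed still of order $\Err(n^{-1/2})$, because \Cor{eq-que} alters the speed only by the bounded factor $1+(\norm{R_\infty}+2\delta_n)(2+3\delta_n)$.

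The structural feature separating this case from the bumpy model of \Subsec{met.graphs} --- where $\check J_n^*$ was an isometry and the speed was preserved via \Cor{proj.comm.jn.co-iso} --- is that here $\hat J_n^*$ is an isometry only for an equilateral graph. Indeed $\hat J_n \hat J_n^*$ multiplies the $e$-th component by $(1-2a/(n\ell_0))(1-2a/(n\ell_e))^{-1} \ne 1$, so the defect operator $\hat W_{\infty,n}$ is nonzero. Consequently, in the associated parent space (\Def{parent.hs}) the orthogonal projections $\hat P_n$ and $\hat P_\infty$ do \emph{not} commute (their commutator, computed just above the statement, is nonzero), so \eqref{eq:proj.commute} fails and the speed-preserving \Thm{main2b} is unavailable. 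One is therefore forced onto the general route through \Cor{main2a}, in which the square-root loss encoded in \eqref{eq:conv.speed} is genuine. Confirming this non-commutation is really the only non-automatic step, since it is what forces the weaker (square-root) rate rather than speed preservation; this is the main obstacle to a sharper conclusion.

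With the contractions $\hat J_n$ available I would then apply \Thm{main2} directly. It gives $\D_n \gnrcW \D_\infty$ with speed $\hat\delta_n \le \wt\delta_n\bigl(1+(\norm{R_\infty}+2\wt\delta_n)(2+3\wt\delta_n)\bigr)$, where $\wt\delta_n$ is the rate \eqref{eq:conv.speed} of order $\Err(\sqrt{\delta_n})$. The square root enters, through \Cor{defect.ops}, from the two ``bad'' summands of the resolvent difference, for which only $\norm{\hat W_n R_n} \le (\norm{R_n}\,\delta_n)^{1/2}$ and $\norm{\hat W_{\infty,n} R_\infty} \le (\norm{R_\infty}\,\delta_n)^{1/2}$ are at hand (with $\norm{R_n} \le \norm{R_\infty}+2\delta_n$ from \Lem{res.est}). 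Inserting $\delta_n \in \Err(n^{-1/2})$ gives $\hat\delta_n \in \Err(\sqrt{\delta_n}) = \Err(n^{-1/4}) \subseteq \Err(n^{-1/8}) = \Err(\sqrt{n^{-1/4}})$, since $n^{-1/4} \le n^{-1/8}$ for $n \ge 1$, establishing the asserted convergence speed of order $\sqrt{n^{-1/4}}$. The bookkeeping through \Cor{eq-que} and \Thm{main2} is routine, so the whole argument reduces to the non-commutation verified in the second paragraph.
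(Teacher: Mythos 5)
Your proposal is correct and follows essentially the same route as the paper's own argument: rescale $J_n$ to the contraction $\hat J_n = J_n/\norm{J_n}$ via \Cor{eq-que} (keeping the QUE-speed in $\Err(n^{-1/2})$), observe that $\hat W_{\infty,n}\neq 0$ for a non-equilateral graph so that $\hat P_n$ and $\hat P_\infty$ do not commute and the speed-preserving \Thm{main2b} is unavailable, and then conclude via \Thm{main2} (equivalently \Cor{main2a}) with the square-root loss. Note only that your computation actually yields the stronger rate $\Err(\sqrt{\delta_n})=\Err(n^{-1/4})$ --- which is what the corollary presumably intends, the printed ``$\sqrt{n^{-1/4}}$'' (i.e.\ $n^{-1/8}$) being most plausibly a slip for $\sqrt{n^{-1/2}}=n^{-1/4}$ --- so your final containment $\Err(n^{-1/4})\subseteq\Err(n^{-1/8})$ is a harmless weakening that makes the literal statement come out true.
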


\begin{remarks}
  \label{rem:change.of.que-def.ex}
  \indent
  \begin{enumerate}
  \item One could turn $\hat J_n^*$ into an isometry also for
    non-equilateral graphs by using the identification operator $\wt
    J_n := K_{\infty,n}^{-1/2} J_n = J_n K_{\infty,n}^{-1/2}$ and
    hence get the same convergence speed, but we wanted to illustrate
    the effect of non-commuting projections here.

  \item Note that we have $\norm[\Lin \HSgen]{\hat P_n\hat
    P_\infty-\hat P_\infty \hat P_n} \to 0$ as $\norm{\hat
    W_{\infty,n}} \to 0$ in operator norm and as $\norm{\hat J_n}=1$,
    cf.\ also \Cor{proj.conv.strongly}, but the operator norm
    $\norm[\Lin \HSgen]{\hat P_n - \hat P_\infty}$ of the projection
    difference does not converge to $0$.

  \item As already mentioned in \Rem{change.of.que-def}, it seems to
    be better to use~\eqref{eq:que2''} instead of~\eqref{eq:que2}
    (with $J_n$ replaced by $\hat J_n$).  In this concrete example,
    one can show 
    that~\eqref{eq:que2''} holds with $\delta_n \in \Err(1/n^{1/2})$,
    hence following the argument in \Rem{change.of.que-def} we would
    still end up with $\D_n \gnrcW \D_\infty$, but now with
    convergence speed of order $\Err(1/n^{1/2})$.
  \item
  \label{change.of.que-def.ex.d}
    Actually, when proving the QUE-convergence
    in~\cite{post:06,post:12}, we have shown that
    \begin{equation*}
      \norm{(\id_{\HS_n}-\hat J_n^*\hat J_n)R_n^{1/2}}=\Err(1/n^{1/2}).
    \end{equation*}
    It is not clear to us if
    $\norm{(\id_{\HS_n}-\hat J_n^*\hat J_n)R_n}$ would give a better
    estimate here.  In the next example (see \Subsec{disc.graphs}),
    using $R_n$ instead of $R_n^{1/2}$ in the operator norm estimate
    gives a better result, see \Rem{res.power.makes.difference}.  It
    could be helpful to analyse the resolvent difference
    $\hat D_n=\hat\iota_n \hat R_n \hat\iota_n^*-\hat\iota_\infty \hat
    R_\infty \hat\iota_\infty^*$ (see~\eqref{eq:dn.direct} for a
    formula) directly in this example.
  \end{enumerate}
\end{remarks}

\subsection{No common natural parent space III: Discrete graphs converging to metric measure spaces}
\label{ssec:disc.graphs}
The second example class in which the approximating and limit space
are of different nature is taken
from~\cite{post-simmer:18,post-simmer:21b}.  Again, we will not go
into much details, as here, only the measure spaces $X_n$ and
$X_\infty$ matter.  Assume that $X_\infty$ is a measure space with
(for simplicity) finite measure $\mu_\infty$; e.g.\ a post-critically
finite fractal such as the Sierpi\'nski gasket.  We treat here a very
simple example of a post-critically finite ``fractal'': the unit
interval $X_\infty=[0,1]$ with Lebesgue measure $\mu_\infty$ together
with its usual (Neumann) Laplacian on $[0,1]$,
see~\cite[Sec.~5.3]{post-simmer:18}
or~[Sec.~2.1]\cite{post-simmer:19}.

  We set
$\HS_\infty=\Lsqr{X_\infty,\mu_\infty}$.
The limit operator $\D_\infty$ is
(minus) the Neumann Laplacian on $X_\infty=[0,1]$, i.e., we have
\begin{equation*}
  \D_\infty f_\infty=-f_\infty''
  \quadtext{with}
  f_\infty \in \Sob[2]{[0,1]}, \quad
  f_\infty'(0)=0, f_\infty'(1)=0.
\end{equation*}

\subsubsection*{The discrete approximation space}
In our unit interval example we set
\begin{equation*}
  X_n = \set{v2^{-n}}{v\in\{0,\dots,2^n\}};
\end{equation*}
in particular, $X_0=\{0,1\} \subset X_n \subset X_\infty =[0,1]$.  We
call $X_0 \subset X_\infty$ the \emph{boundary} of $X_\infty$.
The approximating operator $\D_n$ is
\begin{equation*}
  \D_n f_n = 2 \cdot 4^n \Delta_{G_n},
\end{equation*}
where $\Delta_{G_n}$ is the standard discrete Laplacian on the path
graph $G_n$ with $2^n+1$ vertices, i.e., $\Delta_{G_n}$ acts as
\begin{equation*}
  (\Delta_{G_n} f_n)(v)
  =
  \begin{cases}
    f(v)-\frac12\bigl(f(v_-)+f(v_+)\bigr), & v \in X_n \setminus X_0,\\
    f(v)-f(v_+), & v=0 \in X_0,\\
    f(v)-f(v_-), & v=1 \in X_0.
  \end{cases}
\end{equation*}
where $v_-$ is the left and $v_+$ the right neighbour of $v \in X_n$.

To each vertex $v \in X_n$ we associate a measurable function
$\map{\psi_{n,v}}{X_\infty}[0,1]$ such that $(\psi_{n,v})_{v \in X_n}$
is a partition of unity, i.e., such that
\begin{equation}
  \label{eq:part.uni}
  \sum_{v \in X_n} \psi_{n,v}(x)=1
  \qquad\text{holds for ($\mu_\infty$-almost) all $x \in X_\infty$.}
\end{equation}
\begin{figure}[h]
  \centering
  \begin{picture}(0,0)%
    \includegraphics{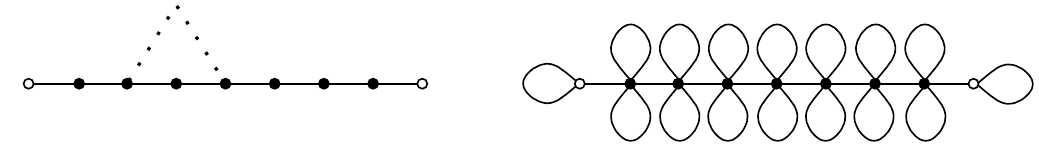}%
  \end{picture}%
  \setlength{\unitlength}{4144sp}%
  \begin{picture}(4734,730)(320,86)
    \put(2581,205){$G_n^\circ$}%
    \put(1335, 605){$\psi_{n,v}$}%
    \put(1100,205){$v \in X_n$}%
    \put(335,605){$X_\infty$}%
    \put(335,205){$G_n$}%
  \end{picture}%
  \caption{\emph{Left:} The space $X_\infty=[0,1]$ with its
    approximating graph $G_n$ on the vertex set $X_n$ (here $n=3$),
    the boundary vertices $0$ and $1$ are outlined; the dotted line
    represents the function $\psi_{n,v}$ for $v=3$.  \emph{Right:} The
    graph $G_n^\circ$ with the added loops.}
  \label{fig:unit-int}
\end{figure}
We choose $\psi_{n,v}$ to be piecewise affine linear (harmonic) with
$\psi_{n,v}(x)=1$ for $x=v$ and $\psi_{n,v}(x)=0$ for all other
vertices $x \in X_n \setminus \{v\}$ (see \Fig{unit-int} left).  Such
functions are continuous and hence belong automatically to the
corresponding form domain $\Sob {[0,1]}$ of the Laplacian.  For more
complicated fractal spaces $X_\infty$
(cf.\ e.g.~\cite{post-simmer:18,post-simmer:21b}) we have chosen
adopted versions of so-called (piecewise) ``harmonic'' functions
$\psi_{n,v}$ having value $1$ at the vertex $v$ (viewed as a point in
$X_\infty$) and $0$ at all other vertices in $V \setminus \{v\}$.

We define $\mu_n(v):=\int_{X_\infty} \psi_{n,v} \dd \mu_\infty$ and
let $\HS_n=\lsqr{X_n,\mu_n}$ be the corresponding weighted discrete
space with norm given by
$\normsqr[\lsqr{X_n,\mu_n}] {f_n}=\sum_{v \in X_n}\abssqr{f_n(v)}
\mu_n(v)$.  In our unit interval example we have
\begin{equation*}
  \mu_n(v)= \int_0^1 \psi_{n,v}(x) \dd x
  =
  \begin{cases}
    \int_0^{2^{-n}} s/2^{-n} \dd s=2^{-n}/2, & v \in X_0,\\
    2\int_0^{2^{-n}} s/2^{-n} \dd s=2^{-n}, & v \in X_n \setminus X_0,
  \end{cases}
\end{equation*}
for a boundary resp.\ inner vertex.  We may think of $(X_n,\mu_n)$ as
a \emph{discretisation} of the space $(X_\infty,\mu_\infty)$, and
typically, $X_n$ is a subset of $X_\infty$ converging in some
sense to $X_\infty$.

\subsubsection*{The identification operators and the QUE-convergence}
We now define the identification operator
$\map{J_n}{\HS_n}{\HS_\infty}$ by
\begin{equation}
  \label{eq:def.j.graph}
  J_n f_n := \sum_{v \in X_n} f_n(v) \psi_{n,v}.
\end{equation}
An easy computation shows that
$\normsqr[\Lsqr{X_\infty,\mu_\infty}]{J_n f_n} \le
\normsqr[\lsqr{X_n,\mu_n}] {f_n}$.
using
\begin{equation*}
  \sum_{w \in X_n} \iprod[\HS_\infty]{\psi_{n,v}}{\psi_{n,w}}
  =\int_{X_n} \psi_{n,v} \dd \mu_\infty=\mu_n(v)
\end{equation*}
by~\eqref{eq:part.uni} (this justifies the definition of the discrete
weight $\mu_n$).  In particular, $J_n$ is a contraction.  The adjoint
$\map{J_n^*}{\HS_\infty}{\HS_n}$ is given by
\begin{equation*}
  (J_n^* f_\infty)(w)
  := \frac1{\mu_n(w)} \iprod[\HS_\infty]{f_\infty}{\psi_{n,w}}.
\end{equation*}

Since $\D_n \ge 0$ for all $n \in \Nbar$, we choose $z_0=-1$ as common
resolvent element.  In particular, we have shown
$\D_n \gnrc \D_\infty$ with convergence speed
$\delta_n=((1+\sqrt 2))2^{-n} \in \Err(2^{-n})$
(cf.~\cite[Sec.~5,3]{post-simmer:18}); for the optimality of this
estimate see \Rem{opt.unit.int}.

If $X_\infty$ is the Sierpi\'nski gasket and $X_n$ an appropriate
discrete weighted graph, then the corresponding scaled discrete
Laplacian $\D_n$ QUE-converges towards the Laplacian $\D_\infty$ on
the Sierpi\'nski gasket with convergence speed
$\delta_n \in \Err(5^{-n/2})$, for more examples and details we
refer to ~\cite{post-simmer:18,post-simmer:21b,post-simmer:19}.

\begin{remark}[an orthogonal partition of unity]
  Another choice of a partition of unity $(\psi_{n,v})_{v \in X_n}$ is
  to \emph{discretise} the measure space $(X_\infty,\mu_\infty)$ into
  a finite number of elements $(X_{\infty,n,v})_{v \in X_n}$, each
  with measure $\mu_n(v)=\mu_\infty(X_{\infty,n,v})>0$, and we set
  $\psi_{n,v}:=\1_{X_{\infty,n,v}}$.  Then $(\psi_{n,v})_{v \in X_n}$
  is an orthonormal family, and $J_n$ is an isometry, and we are in
  the setting of \Cor{proj.comm.jn.iso}.  Nevertheless, the
  QUE-convergence $A_n$ towards $A_\infty$ has not been shown in this
  setting.
\end{remark}

\subsubsection*{The corresponding defect operator and a relation with
  a discrete Laplacian}
For the defect operator associated with $J_n$ we calculate
\begin{align}
  \nonumber
  ((\id_{\HS_n}-J_n^*J_n)f_n)(w)
  &= f(w)-\frac1{\mu_n(w)}\sum_{v \in X_n} f(v)
    \iprod[\HS_\infty]{\psi_{n,v}}{\psi_{n,w}}\\
  \label{eq:wn.graph}
  &= \frac1{\mu_n(w)}\sum_{v \in X_n} \bigl(f(w)-f(v)\bigr)
  \iprod[\HS_\infty]{\psi_{n,v}}{\psi_{n,w}}.
\end{align}
We can interpret $\id_{\HS_n}-J_n^*J_n$ as the Laplacian of a discrete
weighted graph $G_n^\circ=(X_n,\mu_n,\gamma_n)$ (the notation
$G_n^\circ$ becomes clear in a moment (see \Fig{unit-int} right),
where $\map{\mu_n}{X_n}{(0,\infty)}$ is a \emph{vertex weight} and
$\map{\gamma_n}{X_n \times X_n}{(0,\infty)}$ with
\begin{equation*}
  \gamma_n(v,w)
  =\gamma_n(w,v)
  =\iprod[\HS_\infty]{\psi_{n,v}}{\psi_{n,w}}
\end{equation*}
is an \emph{edge weight}.  The associated graph $G_n^\circ$ has an
edge between two vertices $v$ and $w$ if $\gamma_n(v,w)>0$.

Since
\begin{equation*}
  \sum_{w \in X_n} \gamma_n(v,w) = \mu_n(v),
\end{equation*}
the weight is \emph{normalised} (in the terminology
of~\cite[Sec.~2.2]{fclp:18}).  For such graphs, the corresponding
Laplacian $\Delta_{G_n^\circ}$ acts as in~\eqref{eq:wn.graph}.   Since we
\emph{always} have $\gamma_n(v,v)>0$, the graph has \emph{loops} at
each vertex $v \in X_n$ (a \emph{look} is an edge starting and ending
at the same vertex).  We use the symbol $(\cdot)^\circ$ to indicate
the existence of loops at each vertex.

In the unit interval example we have
\begin{align*}
  \gamma_n(v,v)
  &=
  \begin{cases}
    \int_0^{2^{-n}} (s/2^{-n})^2 \dd s=2^{-n}/3, & v \in X_0,\\
    2\int_0^{2^{-n}} (s/2^{-n})^2 \dd s=2^{-n}\cdot(2/3),
    & v \in X_n \setminus X_0
    \quad\text{and}
  \end{cases}\\
  \gamma_n(v,w) &= \int_0^{2^{-n}} s/2^{-n} \cdot (1-s/2^{-n}) \dd s
                  =2^{-n}/6 \quadtext{if} \abs{v-w}=2^{-n}.
\end{align*}
The corresponding matrix is the one associated with the standard
Laplacian being a path graph with $2^n+1$ vertices, one loop attached
at each of the two boundary vertices $v \in \{0,1\}$, and two loops
attached at each inner vertex $v \in X_n \setminus \{0,1\}$ (see
\Fig{unit-int} left).  One can see that this standard Laplacian is
$1/3$ times the standard Laplacian $\Delta_{G_n}$ of the associated
\emph{simple} path graph $G_n$ with $2^n+1$ vertices (and no loops).
The spectrum of $\Delta_{G_n}$ is given by the $2^n+1$ numbers
$1-\cos(k\pi/2^n)$ ($k=0,\dots,2^n$), i.e., the spectrum of
$\Delta_{G_n^\circ}$ is
\begin{align}
  \nonumber
  \spec{\Delta_{G_n^\circ}}
  &= \frac 13 \spec{\Delta_{G_n}}\\
  \label{eq:delta.circ.spec}
  &= \Bigl\{
  0=\frac13-\frac13\cos\Bigl(\frac{0\pi}{2^n}\Bigr),
  \dots,
  \frac13-\frac13\cos\Bigl(\frac{k\pi}{2^n}\Bigr), \dots,
  \frac23=\frac13-\frac13\cos\Bigl(\frac{2^n\pi}{2^n}\Bigr)
  \Bigr\}.
\end{align}
In our unit interval example, the matrix representation of
$\Delta_{G_n}^\circ$ has $1-\gamma_n(v,v)/\mu_n(v)=1-2/3=1/3$ on the
diagonal.  For two different adjacent vertices $v, w \in X_n$
($v \ne w$), we have
\begin{equation*}
  -\frac{\gamma_n(v,w)}{\sqrt{\mu_n(v)\mu_n(w)}}
  = \begin{cases}
  -1/6, & v,w \in X_n \setminus X_0,\\
  -1/(3\sqrt 2)& \text{$v$ or $w$ is in $X_0=\bd X_\infty$}
  \end{cases}
\end{equation*}
(see e.g.~\cite[Sec.~2.4]{fclp:18}).  Usually, the spectrum of a
Laplacian with \emph{normalised} weight is contained in $[0,2]$.
Here, the spectrum of $\Delta_{G_n^\circ}$ is in $[0,1]$; this is due
to the fact that there are loops at each vertex.  Note that $0$ is
always in the spectrum of $\Delta_{G_n^\circ}$ (with constant
eigenvector $\1_{X_n}$).

The defect operator then is
\begin{equation}
  \label{eq:wn.graph2}
  W_n := (\id_{X_n} - J_n^*J_n)^{1/2}
  = \Delta_{G_n^\circ}^{1/2},
\end{equation}
its spectrum is given by the square roots
of~\eqref{eq:delta.circ.spec}.  In particular, the kernel is the same
as the kernel of $\Delta_{G_n^\circ}$; and we have
\begin{align}
  \label{eq:ran.wn}
  \ran W_n
  = (\ker W_n)^\perp
  =\lsqr{X_n,\mu_n} \ominus \C \1_{X_n}.
\end{align}
\subsubsection*{The second defect operator}
The other defect operator
$W_{\infty,n}=(\id_{\HS_\infty}-J_nJ_n^*)^{1/2}$ is given by
\begin{equation}
  \label{eq:w-infty.graph}
  W_{\infty,n}^2f_\infty
  =(\id_{\HS_\infty}-J_nJ_n^*)f_\infty
  = \sum_{v \in X_n} \Bigl(f_\infty
  - \frac1{\mu_n(v)} \iprod[\HS_\infty]{f_\infty}{\psi_{n,v}}
  \Bigr)\psi_{n,v}
\end{equation}
as $f_\infty=\sum_{v \in X_v} f\psi_{n,v}$ pointwise
$\mu_\infty$-almost surely again due to~\eqref{eq:part.uni}.  Note
that in our unit interval example, we can think of $J_nJ_n^*f_\infty$
as a linear spline approximation of $f_\infty$, hence
$W_{\infty,n}^2 f_\infty$ measures the error made by this
approximation.

Moreover, from \Lemenum{defect.ops}{defect.ops.a} we conclude
\begin{align*}
  \spec{W_{\infty,n}}
  &=\spec{W_n}  \cup \{1\}
    =\spec{\Delta_{G_n^\circ}^{1/2}}  \cup \{1\}.
\end{align*}
Note that on
$\HS_\infty \ominus J_n(\HS_n)=\set{\psi_{n,v}}{v \in X_n}^\perp$, the
defect operator $W_{\infty,n}$ acts as identity, hence the extra
eigenvalue $1$.

\subsubsection*{The minimal parent space and corresponding isometries}
The minimal abstract parent space is given by
\begin{equation*}
  \HSmin = \Lsqr{X_\infty}
   \oplus \bigoplus_{n \in \N}
   \bigl(\lsqr{X_n,\mu_n} \ominus \C \1_{X_n}\bigr)
\end{equation*}
(cf.\ \Lem{hs.min} and~\eqref{eq:ran.wn}).  Moreover,
$\map{\iota_\infty}{\Lsqr{X_\infty}}\HSmin$ and
$\map{\iota_n}{\Lsqr{X_n}}\HSmin$ are the corresponding isometries
given by $\iota_\infty f_\infty=(f_\infty,0,\dots)$ and
\begin{equation*}
  \iota_n f_n
  =\Bigr(\sum_{v \in X_n} f_n(v) \psi_{n,v},0,\dots,0,
  \Delta_{G_n^\circ}^{1/2}f_n ,0,\dots\Bigr),
\end{equation*}
respectively.  Again, $\iota_n$ is not positivity-preserving (but
$J_n=\iota_\infty^* \iota_n$ is).  Moreover, we have
\begin{align*}
  (P_nP_\infty-P_\infty P_n)f
  &=\Bigl(
    -\sum_{v \in X_n} (\Delta_{G_n^\circ}^{1/2} f_n)(v) \psi_{n,v},
    0, \dots, 0,
    \Delta_{G_n^\circ}^{1/2}g_n,
    0, \dots
    \Bigr)\\
  \text{with}\quad
    g_n&=J_n^* f_\infty
    =\Bigl(\frac 1{\mu_n(v)}\iprod[\HS_\infty]{f_\infty}{\psi_{n,v}}\Bigr)_{v \in X_n}
\end{align*}
by \Lem{proj.commut}.

\subsubsection*{Weidmann's convergence}
As the projections here do not commute, we conclude from the
QUE-convergence with convergence speed of order $\Err(2^{-n})$ and
\Thm{main2a}:
\begin{corollary}[Weidmann's convergence for discrete graphs
  converging to the unit interval]
  \label{cor:main.ex.int}
  We have $\D_n \gnrcW \D_\infty$ with convergence speed of
  order $\delta_n^{1/2} \in \Err(2^{-n/2})$.
\end{corollary}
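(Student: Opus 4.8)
The plan is to read off the statement directly from the general theory of Section~\ref{sec:gen.case}, since all the data needed for this example has already been assembled. The two ingredients are: first, the QUE-convergence $\D_n \gnrc \D_\infty$ with convergence speed $\delta_n = (1+\sqrt 2)\,2^{-n} \in \Err(2^{-n})$, established above; and second, the fact that the identification operators $J_n$ from~\eqref{eq:def.j.graph} are contractions, $\norm{J_n} \le 1$, which follows from the partition-of-unity normalisation~\eqref{eq:part.uni}.

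With these two facts in hand, I would simply invoke \Cor{main2a}. Because $\norm{J_n} \le 1$, a parent space factorising the $J_n$ exists --- indeed it is the minimal parent space $\HSmin$ constructed above via \Def{parent.hs}, whose $n$-th defect operator is $W_n = \Delta_{G_n^\circ}^{1/2}$ --- and \Cor{main2a} then yields $\D_n \gnrcW \D_\infty$ with Weidmann speed $\wt\delta_n \in \Err(\delta_n^{1/2})$ given by~\eqref{eq:conv.speed}. Substituting $\delta_n \in \Err(2^{-n})$ gives $\wt\delta_n \in \Err\big((2^{-n})^{1/2}\big) = \Err(2^{-n/2})$, which is exactly the claimed order. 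This step is purely mechanical, requiring no estimate beyond those already recorded.

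The only point that deserves comment --- and the reason the statement records the square-root speed rather than the sharper order $3\delta_n$ of \Thm{main2b} --- is that the commuting property $P_n P_\infty = P_\infty P_n$ fails here. Indeed, the commutator computed above via \Lem{proj.commut} is nonzero; equivalently, by \Thm{j.proj.com} the operator $J_n$ is not a partial isometry, since $\spec{W_n} = \spec{\Delta_{G_n^\circ}^{1/2}}$ is not contained in $\{0,1\}$. Hence \Thm{main2b} is unavailable, and the genuine loss of a square root in passing from QUE- to Weidmann-convergence is precisely what the statement reflects. The main ``obstacle'' is thus conceptual rather than computational: recognising that this loss is unavoidable with the present $J_n$, and that a sharper speed would require either commuting projections or the strengthened estimate~\eqref{eq:que2''} discussed in \Rem{change.of.que-def}.
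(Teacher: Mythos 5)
Your proposal is correct and follows essentially the same route as the paper: the paper likewise deduces the corollary from the established QUE-convergence with speed $\Err(2^{-n})$ together with \Thm{main2a} (of which \Cor{main2a} is the packaged version for contractions), noting that the projections do not commute so the square-root loss of \Thm{main2a} cannot be avoided via \Thm{main2b}. Your supplementary justification that $J_n$ fails to be a partial isometry because $\spec{W_n}=\spec{\Delta_{G_n^\circ}^{1/2}}\not\subset\{0,1\}$ is consistent with the paper's computation of the nonzero commutator via \Lem{proj.commut}.
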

As a consequence of QUE- or Weidmann's convergence, the eigenvalues
converge: The spectrum of $\D_n$ and $\D_\infty$ are given by
\begin{align*}
  \spec{\D_n}
  &= \spec{2 \cdot 4^n \Delta_{G_n}}
  = \Bigset
  {2 \cdot 4^n \Bigl(1-\cos\Bigl(\frac{k\pi}{2^n}\Bigr)\Bigr)}
  {k = 0, \dots, 2^n} \quad\text{and}\\
  \spec {\D_\infty}&= \bigset {k^2\pi^2}{k = 0, \dots, 2^n},
\end{align*}
and an eigenvalue $2 \cdot 4^n(1-\cos(k\pi/2^n))= k^2\pi^2+\Err(k^4
4^{-n})$ of $\D_n$ converges to an eigenvalue $k^2\pi^2$ of
$\D_\infty$ for $k \in \{0,\dots,2^n\}$ fixed as $n \to \infty$.

\subsubsection*{Better convergence speed for Weidmann's convergence?}
Let us have a closer look at the convergence speed.  In the proof of \Thm{main2a},  the loss of
convergence speed $\delta_n^{1/2}$ is due to the fact that we only
estimate
\begin{equation}
  \label{eq:no-loss-of-conv-speed}
  \norm{P_\infty^\perp D_n P_n} \le \norm{W_n^2R_n}^{1/2}
  \quadtext{instead of}
  \norm{P_\infty^\perp D_n P_n}
  =\norm{R_n^* W_n^2R_n}^{1/2}
  =\norm{W_nR_n}
\end{equation}
in~\eqref{eq:main2a.est1}, and similarly for $P_\infty D_n P_n^\perp$
in~\eqref{eq:main2a.est2}.  Here, we have the special situation that
$W_n$ and $R_n$ are functions of the same operator $\Delta_{G_n}$,
namely $W_n=w_n(\Delta_{G_n})$ and $R_n=r_n(\Delta_{G_n})$ with
\begin{equation*}
  w_n(\lambda)=\sqrt {\lambda/3}
  \quadtext{and}
  r_n(\lambda)=\frac 1{1+ 2 \cdot 4^n\lambda}.
\end{equation*}
In particular, we can easily calculate the order of several operator
norms:
\begin{lemma}
  \label{lem:norm.est.ex}
  We have
  \begin{subequations}
  \begin{align}
    \label{eq:norm.est.ex}
    \norm{P_\infty^\perp D_n P_n}
    &= \norm{W_nR_n}
      \in \frac{2^{-n}}{\sqrt 3(1+\pi^2)} + \Err(4^{-n})
      \subset \Err(2^{-n}),\\
    \label{eq:norm.est2.ex}
        \norm{(\id_{\HS_n}-J_n^*J_n)R_n}
    &= \norm{W_n^2 R_n}
    = \Bigl(\frac2 {3(1+4^{n+1})}\Bigr)^{1/2}
    \in \Err(4^{-n}),\\
    \label{eq:est.qf}
    \norm{(\id_{\HS_n}-J_n^*J_n)R_n^{1/2}}
    &= \norm{W_n^2 R_n^{1/2}}
    = 2/(3\sqrt{1+4^{n+1}})
    \in \Err(2^{-n}).
  \end{align}
\end{subequations}
\end{lemma}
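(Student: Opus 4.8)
The plan is to reduce each of the three operator norms to the maximisation of an explicit scalar function over a known finite set, exploiting that $W_n$, $R_n$, and hence every operator appearing in the statement, is a function of the \emph{single} self-adjoint operator $\Delta_{G_n}$. First I would record the left-hand equalities: the one in~\eqref{eq:norm.est.ex} is precisely \Cor{defect.ops} (the identity $\norm[\Lin \HSgen]{P_\infty^\perp D_n P_n}=\norm[\Lin{\HS_n}]{W_nR_n}$), whereas those in~\eqref{eq:norm.est2.ex} and~\eqref{eq:est.qf} hold by the very definition $W_n^2=\id_{\HS_n}-J_n^*J_n$. Writing $W_n=w_n(\Delta_{G_n})$ and $R_n=r_n(\Delta_{G_n})$ with $w_n(\lambda)=\sqrt{\lambda/3}$ and $r_n(\lambda)=(1+2\cdot 4^n\lambda)^{-1}$, each of $W_nR_n$, $W_n^2R_n$ and $W_n^2R_n^{1/2}$ is a non-negative function of $\Delta_{G_n}$, so its operator norm equals the supremum of the corresponding scalar function over $\spec{\Delta_{G_n}}=\set{1-\cos(k\pi/2^n)}{k=0,\dots,2^n}$.

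It then remains to locate these suprema. For~\eqref{eq:norm.est2.ex} and~\eqref{eq:est.qf} the relevant functions are $\lambda\mapsto\tfrac13\lambda\,r_n(\lambda)$ and $\lambda\mapsto\tfrac13\lambda\,r_n(\lambda)^{1/2}$; a one-line derivative check shows both are strictly increasing on $[0,2]$, so the supremum is attained at the top eigenvalue $\lambda=1-\cos\pi=2$, giving $\tfrac23\,r_n(2)\in\Err(4^{-n})$ and $\tfrac23\,r_n(2)^{1/2}=2/(3\sqrt{1+4^{n+1}})\in\Err(2^{-n})$ as asserted. For~\eqref{eq:norm.est.ex} the extra factor $\sqrt\lambda$ makes $g(\lambda):=w_n(\lambda)r_n(\lambda)$ unimodal, with interior maximum at $2\cdot 4^n\lambda=1$; since already the smallest positive eigenvalue $\lambda_1=1-\cos(\pi/2^n)$ obeys $2\cdot 4^n\lambda_1\ge 8>1$ for every $n\ge 1$, the value $\lambda_1$ lies on the decreasing branch of $g$, and a comparison with $g(0)=0$ shows that the discrete maximum is attained at $\lambda_1$.

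The one genuinely delicate point is this location argument for~\eqref{eq:norm.est.ex}: the maximiser is the bottom of the nonzero spectrum $\lambda_1$, \emph{not} the continuous critical point $\lambda=(2\cdot 4^n)^{-1}$, which is swallowed by the spectral gap and never actually occurs in $\spec{\Delta_{G_n}}$. Having pinned the maximiser down, I would extract the asymptotics from $1-\cos(\pi/2^n)=\tfrac{\pi^2}{2}4^{-n}\bigl(1+\Err(4^{-n})\bigr)$, whence $2\cdot 4^n\lambda_1=\pi^2\bigl(1+\Err(4^{-n})\bigr)$ (this is what produces the $1+\pi^2$ in the denominator) and $\sqrt{\lambda_1/3}\in\Err(2^{-n})$; substituting into $g(\lambda_1)$ gives the $2^{-n}$-order leading term and the $\Err(4^{-n})$ remainder recorded in~\eqref{eq:norm.est.ex}, and \emph{a fortiori} the containment in $\Err(2^{-n})$. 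This sharp evaluation is exactly what shows that, in contrast to the general square-root bound behind \Cor{main.ex.int}, no loss of convergence speed occurs in this example.
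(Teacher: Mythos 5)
Your proposal is correct and follows essentially the same route as the paper: reduce each norm via the spectral theorem to maximising an explicit scalar function over $\spec{\Delta_{G_n}}$, use monotonicity to place the maximiser at $\lambda=2$ for~\eqref{eq:norm.est2.ex} and~\eqref{eq:est.qf}, and for~\eqref{eq:norm.est.ex} observe that the first non-zero eigenvalue $\lambda_1(G_n)=1-\cos(\pi/2^n)$ already lies past the continuous critical point $\lambda=(2\cdot 4^n)^{-1}$ on the decreasing branch, so the discrete supremum is attained there and the asymptotics follow from $\lambda_1(G_n)\in \pi^2/(2\cdot 4^n)+\Err(16^{-n})$. The only differences are cosmetic (your explicit bound $2\cdot 4^n\lambda_1\ge 8$ versus the paper's bare assertion that $\lambda_1>1/(2\cdot 4^n)$), and your evaluations at $\lambda=2$ in fact recover the correct closed forms where the paper's displayed formula for $\norm{W_n^2R_n}$ carries a spurious square root.
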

\begin{proof}
  For the first asymptotic expansion, we have set
  $\eta_n(\lambda) := w_n(\lambda) r_n(\lambda)$, then
  $\norm{W_nR_n} = \sup \eta_n(\spec {\Delta_{G_n}})$.  The function
  $\eta_n$ has its maximal value $1/(\sqrt 3 \cdot 2^{n+1})$ at
  $\lambda=1/(2\cdot 4^n)$ as the function $\eta_n$ is monotonously
  decreasing for $\lambda>1/(2\cdot 4^n)$.  The first non-zero
  eigenvalue $\lambda_1(G_n)$ of $\Delta_{G_n}$ is larger than
  $1/(2 \cdot 4^n)$ (we have
  $\lambda_1(G_n)=1-\cos(\pi/2^n) \in \pi^2/(2\cdot 4^n) +
  \Err(16^{-n})$), the supremum $\sup \eta_n(\spec {\Delta_{G_n}})$ is
  achieved at $\lambda_1(G_n)$, so we have
  $\sup \eta_n(\spec {\Delta_{G_n}}) \in \eta_n(\pi^2/(2\cdot 4^n)) +
  \Err(4^{-n})$.

  For the second asymptotic expansion, we set
  $\xi_n(\lambda)=w_n(\lambda)^2r_n(\lambda)$, then then
  $\norm{W_n^2R_n} = \sup \xi_n(\spec {\Delta_{G_n}})$.  The function
  $\xi_n$ is monotonously increasing, hence the supremum is actually
  achieved at $\lambda=2$, i.e.,
  $\sup \xi_n(\spec {\Delta_{G_n}})=\xi_n(2)$.

  The last asymptotic expansion can be seen similarly: the function
  $\lambda \mapsto \lambda/(3\sqrt{1+2\cdot 4^n \lambda})$ is
  monotonously increasing, and hence achieves its maximum on
  $\spec{\Delta_{G_n}}$ at $\lambda=2$).
\end{proof}

\begin{remark}[no loss of convergence speed in the ``bad''
  estimate~\eqref{eq:main2a.est1} in \Thm{main2a}]
  \label{rem:estimate.not.better}
  \indent

  \noindent
  The cause of the square root in the convergence speed lies in the
  two ``bad'' estimates
  ~\eqref{eq:main2a.est1}--~\eqref{eq:main2a.est2} in the proof of
  \Thm{main2a}.  Nevertheless, using \Lem{norm.est.ex}, we see that
  in~\eqref{eq:no-loss-of-conv-speed} in this example, we do not loose
  convergence speed as both terms are of order $\Err(2^{-n})$.
\end{remark}
\begin{remark}[optimality of estimate in the (modified) QUE-setting
  for the unit interval example]
  \label{rem:opt.unit.int}
  \indent

  \noindent
  In~\cite{post-simmer:18,post-simmer:19} we have actually shown the
  norm estimate
  \begin{equation*}
    \norm[\HS_n]{(\id_{\HS_n}-J_n^*J_n)R_n^{1/2}}
    \in \Err(2^{-n})
  \end{equation*}
  in the unit interval example.  In~\eqref{eq:est.qf} we have seen
  that $\norm{(\id_{\HS_n}-J_n^*J_n)R_n^{1/2}} \in \Err(2^{-n})$, and
  the estimate is sharp.

  Since we have also shown in~\cite{post-simmer:18,post-simmer:19}
  that the remaining estimates are of the same order, i.e.,
  \begin{equation*}
    \norm{(\id_{\HS_\infty}-J_nJ_n^*)R_\infty^{1/2}}
    \in \Err(2^{-n})
    \quadtext{and}
    \norm{R_\infty J_n - J_n R_n} \in \Err(2^{-n}),
  \end{equation*}
  the modified QUE-estimates of~\cite{post-simmer:18,post-simmer:19}
  (with the resolvent powers $R_n^{1/2}$ and $R_\infty^{1/2}$ instead
  of $R_n$ and $R_\infty$ as in~\eqref{eq:que2}) are \emph{sharp}.
\end{remark}

\begin{remark}[resolvent power makes a difference]
  \label{rem:res.power.makes.difference}
  It makes a difference if one uses only the resolvent $R_n^{1/2}$
  instead of $R_n$ in~\eqref{eq:que2}, namely we have
  \begin{align*}
    \norm{(\id_{\HS_n}-J_n^*J_n)R_n}
    &=\norm{W_n^2R_n}
      \in \Err(4^{-n}),
      \quad\text{and}\\
    \norm{(\id_{\HS_n}-J_n^*J_n)R_n^{1/2}}
    &=\norm{W_n^2R_n^{1/2}}
      \in \Err(2^{-n})
  \end{align*}
  from~\eqref{eq:norm.est2.ex}--\eqref{eq:est.qf}.  We believe that
  better estimates can be done also for other post-critically finite
  fractals as considered in~\cite{post-simmer:18,post-simmer:21b}.  We
  are not aware if this also makes a difference in the graph-like
  spaces example, see
  \Remenum{change.of.que-def.ex}{change.of.que-def.ex.d}.

  In order to get the better convergence speed $2^{-n}$ in Weidmann's
  convergence (and not $2^{-n/2}$ as in \Cor{main.ex.int}) one should
  also analyse the second ``bad'' term~\eqref{eq:norm.summand3},
  namely
  \begin{equation*}
    \norm{P_\infty D_n P_n^\perp}
    = \norm{R_\infty^* W_{\infty,n}^2 R_\infty}^{1/2}.
  \end{equation*}
  We have shown in~\cite{post-simmer:18,post-simmer:21b} that
  $\norm[\Lin{\HS_\infty}]{W_{\infty,n}^2 R_\infty^{1/2}}$ is of order
  $2^{-n}$ in the unit interval example.  We believe that (at least in
  the unit interval example) one can show that
  $\norm[\Lin{\HS_\infty}]{W_{\infty,n}^2 R_\infty}$ is of order
  $4^{-n}$, i.e., that there is a constant $C>0$ such that
  \begin{equation}
    \label{eq:est.op}
    \norm[\Lsqr {X_\infty}]{f_\infty-J_n J_n^* f_\infty} \le C 4^{-n}
    \norm[\Lsqr{X_\infty}]{-f_\infty''+f_\infty}
  \end{equation}
  for all $n \in \N$ and all $f_\infty \in \dom \D_\infty$.  Recall
  that $J_nJ_n^*f_\infty$ is an approximation of $f_\infty$ with a
  linear spline.  We will not go into details here.

  If~\eqref{eq:est.op} holds, then we are able to show that $\D_n
  \gnrcW \D_\infty$ with convergence speed of order $2^n$ as in the
  QUE-convergence.  We will treat related questions in a subsequent
  publication.

\end{remark}

\newpage
%
%


\providecommand{\bysame}{\leavevmode\hbox to3em{\hrulefill}\thinspace}
\providecommand{\MR}{\relax\ifhmode\unskip\space\fi MR }
\providecommand{\MRhref}[2]{%
  \href{http://www.ams.org/mathscinet-getitem?mr=#1}{#2}
}
\providecommand{\href}[2]{#2}

\end{document}